\documentclass[a4paper, 10pt, twoside, notitlepage]{amsart}

\usepackage[T1]{fontenc}
\usepackage{lmodern,microtype}
\usepackage{amsmath,amscd}
\usepackage{amssymb}
\usepackage{amsthm}
\usepackage{comment}
\usepackage{graphicx}
\usepackage{epstopdf}
\usepackage{mathrsfs}
\usepackage{xcolor}
\usepackage[ocgcolorlinks, linkcolor=blue]{hyperref}
\usepackage{bookmark}

\DeclareMathOperator{\Ric}{Ric}
\DeclareMathOperator{\Riem}{Riem}

\theoremstyle{plain}
\newtheorem{thm}{Theorem}[section]
\newtheorem*{thm*}{Theorem}
\newtheorem{thmA}{Theorem}

\newtheorem{prop}[thm]{Proposition}

\newtheorem{lem}[thm]{Lemma}
\newtheorem{cor}[thm]{Corollary}

\newtheorem*{conjecture*}{Conjecture}
\theoremstyle{definition}

\newtheorem*{defi*}{Definition}
\theoremstyle{remark}
\newtheorem{rmk}[thm]{Remark}

\numberwithin{equation}{section}
\newcommand {\R} {\mathbb{R}}

\newcommand {\p} {\partial}

\newcommand {\dt} {\partial_t}

\newcommand {\D} {\Delta}

\newcommand{\ol}{\overline}

\newcommand{\norm}[1]{\lVert #1 \rVert}
\newcommand{\divergence}{{\rm div}}
\newcommand{\og}{\overline g}

\newcommand{\wt}{\widetilde}
\newcommand{\abs}[1]{\lvert #1 \rvert}

\newcommand{\s}{\hspace{0.5pt}}
\renewcommand{\a}{\alpha}
\renewcommand{\b}{\beta}

\DeclareMathOperator{\tr}{tr}

\DeclareMathOperator{\Id} {Id}

\newcommand{\kommentar}[1]{}

\title[Recovering a Riemannian manifold from minimal surface areas]{Recovering a Riemannian manifold from minimal surface areas}

\author[C. Carstea]{Catalin Carstea}
\address{Department of Applied Mathematics, National Yang Ming Chiao Tung University,
Hsinchu, Taiwan}
\curraddr{}
\email{catalin.carstea@gmail.com}

\author[M. Lassas]{Matti Lassas}
\address{Department of Mathematics and Statistics, University of Helsinki, Helsinki, Finland}
\curraddr{}
\email{matti.lassas@helsinki.fi}

\author[T. Liimatainen]{Tony Liimatainen}
\address{Department of Mathematics and Statistics, University of Helsinki, Helsinki, Finland}
\curraddr{}
\email{tony.liimatainen@helsinki.fi}

\author[L. Tzou]{Leo Tzou}
\address{School of Mathematics and Statistics, University of Melbourne, Melbourne, Australia}
\curraddr{}
\email{leo.tzou@unimelb.edu.au}

\begin{document}
\raggedbottom

	\begin{abstract}
We study the recovery of a three-dimensional Riemannian manifold from the areas
of minimal surfaces. We show that areas of minimal surfaces determine the metric
on manifolds foliated by minimal surfaces, up to boundary-fixing isometry. This
resolves the conjecture on boundary rigidity for least-area data in an exterior
formulation, even without topological restrictions on the minimal surfaces. We
also prove global rigidity for Einstein metrics on compact three-manifolds with
boundary, assuming \(\pi_1(M,\partial M)=0\) and a non-trapping condition. The
Einstein result includes the Euclidean case.
\end{abstract}
\maketitle

		\medskip

		\noindent{\bf Keywords.} Inverse problems, minimal surfaces, boundary rigidity, Einstein metrics, quasilinear elliptic equations.

	\tableofcontents

\clearpage
\flushbottom
% Final disclosure wording and funding acknowledgments should be agreed by all authors.

\section*{Disclaimer}
	The main results, arguments, and statements of this paper were developed without the use of AI. AI tools were used only for auxiliary mathematical discussions and for assistance with parts of the writing. The authors take full responsibility for the final mathematical statements and proofs. 
    %The authors will improve the presentation in an updated version following this initial release.

\section{Introduction}\label{sec:introduction}
      In the earlier works \cite{carstea2023inverse,carstea2024calder}, it was  demonstrated that the areas of 2D minimal surfaces determine the geometry of the surfaces themselves up to isometry. In this paper, we address the more challenging problem of recovering the full 3D Riemannian metric from the same area data.

     If $(\wt M,\ol g)$ is a Riemannian manifold and $\Sigma\subset \wt M$ is an embedded smooth minimal surface, we set
    \begin{equation}\label{eq:area_map}
    \mathcal{A}(\Sigma)=\int_\Sigma dV_\Sigma
    \end{equation}
    to be the area of the minimal surface. Here $dV_\Sigma$ is the volume form on $\Sigma$ induced from $\ol g$. We assume that all Riemannian manifolds and minimal surfaces in this paper are oriented, and that all minimal surfaces are compact and connected and have smooth nonempty boundary. Surfaces used for the data are contained in the interior of the enlarged ambient manifold. Orientations and signed unit normals are matched near the common boundary whenever two area maps are compared.

    For minimal surfaces there is no simple parametrization of their boundaries in general. However, if the boundary $\p \Sigma$ of a minimal surface $\Sigma\subset \widetilde M$ is fixed and smooth, we may consider minimal surfaces that are small perturbations of $\Sigma$ as follows. We choose Fermi coordinates $(x,t)$ in $\widetilde{M}$ such that $\Sigma$ corresponds to $\{t=0\}$, and we consider minimal surfaces $\Sigma^f$ that are small perturbations of $\Sigma$ given as graphs in Fermi coordinates,
\[
\Sigma^f = \{ (x, u^f(x)) \,:\, x \in \Sigma \}.
\]
Here $u^f$ is the solution of the minimal surface equation in Fermi coordinates, see \eqref{eq:minimal_surface_general}, with given Dirichlet data $f$. That is
\[
u^f|_{\p \Sigma} = f.
\]
Here we assume that the stability equation corresponding to $\Sigma$ is well-posed, that is, $0$ is not a Dirichlet eigenvalue of the Jacobi operator of $\Sigma$. We then call $\Sigma$ \emph{non-degenerate}. For a surface whose boundary lies in the open known exterior, non-degeneracy can be arranged by a sufficiently small inward shrinking that keeps the point under consideration in its interior, as explained below. It holds automatically for the leaves of the smooth foliations considered here. See Remark \ref{rmk:leaves_nondegenerate}. We also assume that $f \in W$ where
\[
W = \{ f \in C^{\infty}(\p \Sigma) \,:\, \norm{f}_{C^{2,\gamma}(\p \Sigma)} < \delta \}
\]
with $\gamma\in (0,1)$ some constant and $\delta > 0$ sufficiently small (so that the minimal surface equation has a unique small solution). We also write $W=W_\delta$. In the above situation, we simply write
\[
\mathcal{A}(f)=\mathcal{A}(\Sigma^f)
\]
and this is the restriction to smooth boundary values of a smooth map on an open neighbourhood of zero in $C^{2,\gamma}(\p\Sigma)$. If we want to emphasize the fixed $\Sigma$, we also write
\[
\mathcal{A}_\Sigma(f)=\mathcal{A}(\Sigma^f).
\]
For these facts, see \cite{carstea2023inverse} and \cite{busch2025generalized}.

\noindent\textbf{Exterior problem formulation of area data:}
We use the exterior formulation of area data in \cite[Section~3.2]{carstea2024calder} and \cite{busch2025generalized} to state our results.
Let $(M_1,\ol g_1)$ and $(M_2,\ol g_2)$ be compact connected $3$-dimensional Riemannian manifolds with mutual smooth nonempty boundary $\p M$. We assume that $M_j$ is contained in the interior of a slightly larger compact connected manifold $\widetilde{M}_j$, and that these enlargements have an identified exterior
\[
\Omega:=\widetilde M_1\setminus M_1=\widetilde M_2\setminus M_2
\]
which is an attached collar of $\p M$, on which $\ol g_1=\ol g_2$. The metrics are smooth up to the outer boundary. Boundaries of measured surfaces lie in the interior of this exterior collar. Thus the geometry of the exterior region $\Omega$ is known. Our data consist of the areas of minimal surfaces with boundary in $\Omega$ and of their small perturbations, in the following sense. Let $\Sigma_1\subset \wt M_1$ and $\Sigma_2\subset \wt M_2$ be non-degenerate embedded minimal surfaces with $\p\Sigma_1=\p\Sigma_2\subset \Omega$ and
\begin{equation}\label{eq:surfaces_agree_in_Omega}
\Sigma_1\cap \Omega=\Sigma_2\cap\Omega.
\end{equation}
With the normals matched in the exterior, the equality $\ol g_1=\ol g_2$ in $\Omega$ implies that the signed Fermi coordinates of $\Sigma_1$ and $\Sigma_2$ coincide near the common boundary $\p\Sigma:=\p\Sigma_1=\p\Sigma_2$, so that the area maps $\mathcal{A}_{\Sigma_1}$ and $\mathcal{A}_{\Sigma_2}$ are defined on a common set $W_\delta$ of boundary values. We say that \emph{the area data of $\Sigma_1$ and $\Sigma_2$ agree} if
\begin{equation}\label{eq:area_data_agree}
\mathcal{A}_{\Sigma_1}(f)=\mathcal{A}_{\Sigma_2}(f) \quad \text{for all } f\in W_\delta
\end{equation}
for some $\delta>0$. We write
\[
\mathcal{A}_1=\mathcal{A}_2
\]
if for every non-degenerate embedded minimal surface $\Sigma_1\subset \wt M_1$ with $\p \Sigma_1\subset \Omega$ there is a non-degenerate embedded minimal surface $\Sigma_2\subset \wt M_2$ satisfying \eqref{eq:surfaces_agree_in_Omega} whose area data agree with those of $\Sigma_1$, and vice versa.
From the condition $\mathcal{A}_1=\mathcal{A}_2$ we would like to show that there is an isometry $(M_1,\ol g_1)\to (M_2,\ol g_2)$.
These are branch-area data: $\mathcal A_\Sigma$ is the area of the small solution
branch issuing from the specified surface, whether or not another spanning
surface has smaller area. Equality of the minimum area over all spanning
surfaces is not assumed to imply equality of these branch maps.

Our earlier results established that areas determine minimal surfaces themselves:
\begin{thmA}[Recovery of minimal surfaces \cite{carstea2023inverse,carstea2024calder}]\label{thm:minimal_surface_general}
Let $(\Sigma_1, g_1)$ and $(\Sigma_2, g_2)$ be non-degenerate embedded oriented 2D minimal surfaces in oriented 3D Riemannian manifolds $(\wt M_1,\ol g_1)$ and $(\wt M_2,\ol g_2)$ respectively, having a mutual smooth boundary $\p \Sigma\subset \Omega$ and satisfying $\Sigma_1\cap \Omega=\Sigma_2\cap \Omega$. Assume that their area data agree,
\[
 \mathcal{A}_{\Sigma_1}(f)=\mathcal{A}_{\Sigma_2}(f)
\]
for all $f\in W$.
Then there exists an isometry $F: \Sigma_1 \to \Sigma_2$,
\[
F^* g_2 = g_1,
\]
which satisfies $F|_{\partial \Sigma} = \text{Id}$, and
\[
 F^*A_2=A_1,
\]
where $g_j$ is the induced metric and $A_j$ is the second fundamental form of $\Sigma_j\subset (\wt M_j,\ol g_j)$.
\end{thmA}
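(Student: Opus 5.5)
We would follow the strategy of \cite{carstea2023inverse,carstea2024calder}: higher-order linearization of the area map, reduction to an inverse problem on the fixed surface $\Sigma_j$, and recovery of the conformal structure and then the metric via a Calder\'on-type problem.

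\emph{Step 1: linearization.} Write the minimal surface equation in Fermi coordinates of $\Sigma_j$ as a quasilinear elliptic equation $Q_j(u)=0$ for $u^f$. Take the family $f=\sum_k \epsilon_k f_k$ and differentiate in $\epsilon$. The first linearization $v_k=\p_{\epsilon_k}u^f|_{\epsilon=0}$ solves the Jacobi equation
\[
L_j v=\Delta_{g_j}v+(|A_j|^2+\Ric_{\ol g_j}(\nu,\nu))v=0, \qquad v|_{\p\Sigma}=f_k .
\]
Since $\Sigma_j$ is minimal, the first variation of area vanishes up to a boundary term, and that term is determined by the data in $\Omega$. The second derivative $\p^2_{\epsilon_1\epsilon_2}\mathcal A(0)$ is then the quadratic form of the Jacobi operator. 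After integration by parts this is a boundary pairing of the Dirichlet-to-Neumann map $\Lambda_{L_j}$, so equality of the area data gives $\Lambda_{L_1}=\Lambda_{L_2}$.

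\emph{Step 2: recovery of the conformal structure and the potential.} In two dimensions the Dirichlet-to-Neumann map of a Schr\"odinger operator $\Delta_g+q$ determines the conformal class of $g$ up to a boundary-fixing diffeomorphism. After pulling back we may assume $g_2=c\,g_1$. The map also determines $q$ in the appropriate conformally invariant sense, namely $c\,q_2=q_1$.

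\emph{Step 3: higher-order linearizations.} To find $c$ and $A_j$ we use the third and fourth derivatives $\p^3_{\epsilon_1\epsilon_2\epsilon_3}\mathcal A$ and $\p^4\mathcal A$. In Fermi coordinates, the cubic and quartic terms of the area functional involve $A_j$, $\nabla A_j$, the metric $g_j$, and the ambient curvature. The third derivative gives integral identities of the form
\[
\int_{\Sigma}\bigl(A_1(\nabla v_1,\nabla v_2)v_3-c^{-1}A_2(\nabla v_1,\nabla v_2)v_3\bigr)\,dV+\dots=0
\]
for all solutions $v_i$ of the Jacobi equation.

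We insert complex geometrical optics solutions built from holomorphic Morse functions, using Bukhgeim-type stationary phase. Testing with products of holomorphic and antiholomorphic solutions gives density results, and from these we extract that the trace-free tensors agree: $A_1=A_2$ viewed as $(1,1)$-type forms, that is, the holomorphic quadratic differentials coincide. The fourth derivative then forces $c\equiv 1$: its terms contain $|A|^2$ quadratically, while $c$ enters with a different homogeneity. A boundary argument pins the constant, since $c=1$ near $\p\Sigma\subset\Omega$ and $c$ satisfies an elliptic unique-continuation equation.

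\emph{Main obstacle.} We expect Step 3 to be hardest, namely separating the conformal factor from $A$ in the third- and fourth-order identities. The integrands mix $c$, derivatives of $c$, and $A$. We would first use the $\epsilon^3$ identity with two holomorphic solutions and one antiholomorphic solution to isolate the $(2,0)$-part of $A$, obtaining $A_1^{(2,0)}=A_2^{(2,0)}$. We would then use the Gauss equation together with the already recovered potential $q=|A|^2+\Ric(\nu,\nu)$ to derive an elliptic equation for $\log c$ with zero Cauchy data on $\p\Sigma$. Unique continuation then gives $c=1$, and $F$ is the resulting boundary-fixing diffeomorphism, which satisfies $F^*g_2=g_1$ and $F^*A_2=A_1$.
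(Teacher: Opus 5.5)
\textbf{The gap is in Step 3: nothing in your argument determines the conformal factor $c$.} Your Steps 1--2, and the use of CGO solutions with holomorphic Morse phases to recover a trace-free tensor, follow the route of the cited works. Differentiating $\mathcal A$ directly instead of the nonlinear DN map is also fine, since $D\mathcal E(0)=0$ and $D^2\mathcal E(0)[w^{ij},v^k]=0$.

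The trouble is what the first- and third-order data can see. Through cubic order the area integrand is
\[
 \tfrac12\bigl(|du|_g^2+q\,u^2\bigr)dV_g+\bigl(\tfrac12u\,k^{(1)}(du,du)+\tfrac1{12}h^{(2)}u^3\bigr)dV_g .
\]
In two dimensions this is unchanged under $(g,A,q,h^{(2)})\mapsto(cg,cA,c^{-1}q,c^{-1}h^{(2)})$, with $A$ covariant. These jets can be realized by Fermi metrics $ds^2+g_s$. Hence, after pulling back, your third-derivative identity only gives $A_1=c^{-1}A_2$, not $A_1=A_2$, and it carries no information on $c$. Two smaller points: a trace-free symmetric tensor is of type $(2,0)+(0,2)$, not $(1,1)$. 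Its Hopf differential is also not holomorphic for a general ambient metric.

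The step meant to fix $c$ must therefore add gauge-breaking information, and the Gauss equation does not. For a minimal surface in a $3$-manifold it reads $K_g=K_{\ol g}(T\Sigma)-\tfrac12|A|_g^2$, with $K_{\ol g}(T\Sigma)=\tfrac12R(\ol g)-\Ric(N,N)$. The recovered $q$ only contains $|A|_g^2+\Ric(N,N)$. Substituting $K_{cg_1}=c^{-1}\bigl(K_{g_1}-\tfrac12\Delta_{g_1}\log c\bigr)$ gives a relation involving the unknown ambient scalar curvature of $\ol g_2$, not a closed elliptic equation for $\log c$. Any $c$ with $c=1$ near $\p\Sigma$ is compatible with it. (In this paper the Gauss equation is used the other way round, to compute tangential Ricci curvature once $g$, $A$, $k^{(2)}$ are known.)

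Your remark that the fourth derivative forces $c\equiv1$ "because $|A|^2$ enters quadratically" is not an argument either. The gauge-breaking term is the purely metric quartic term $-\tfrac18|du|_g^4\,dV_g$ from expanding $\sqrt{1+k(u)(du,du)}$. After the pullback it becomes $-\tfrac18c^{-1}|du|_{g_1}^4\,dV_{g_1}$. To use it you would need to:
\begin{itemize}
\item take the fourth derivative of $\mathcal A$;
\item check that the $w^{ij}$ coincide for both metrics, so that the terms containing them cancel by the gauge invariance above;
\item choose CGO solutions (with $h$ the semiclassical parameter) so that $(1-c^{-1})$ times the symmetrized $\langle dv^1,dv^2\rangle_g\langle dv^3,dv^4\rangle_g$ dominates the unknown $k^{(2)}$, $h^{(3)}$ terms and the $(1-c^{-1})\,q\,u^2|du|_g^2$ term, all of which carry fewer powers of $h^{-1}$.
\end{itemize}
Stationary phase then gives $c=1$ on a dense set, and only then does $A_1=c^{-1}A_2$ become $F^*A_2=A_1$.
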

The passage from area data to the Dirichlet-to-Neumann map is given by
\cite[Lemma~3.2]{carstea2024calder}, which we recall in
Section~\ref{subsec:area_dn}. Theorem~\ref{thm:minimal_surface_general} is
\cite[Theorem~1.2]{carstea2024calder} in this exterior formulation.
Its proof also identifies the Jacobi potential
\[
 q_j=-|A_j|_{g_j}^2-\Ric_{\ol g_j}(N_j,N_j),\qquad F^*q_2=q_1.
\]
The precise lower-order coefficient information used here is recorded in
Section~\ref{subsec:known_coefficients}. In Section~\ref{subsec:k2} we recover
the full second normal derivative of the metric as well.

Our first theorem resolves the following conjecture in the exterior formulation described above:
\begin{conjecture*}[Boundary rigidity for least area data \cite{Tracey}]
Let $(\wt M, \ol g)$ be a Riemannian $3$-ball which admits a foliation by properly embedded,
area-minimizing surfaces.
Suppose that for this foliation and any nearby minimal surface perturbation, we know the areas of the
leaves. Then this information determines $\ol g$ up to boundary fixing isometry.
\end{conjecture*}
The theorem below is even more general and allows non-trivial topology for the manifold and the minimal surfaces. The leaves need not be graphical over an a priori known surface. We assume that the areas of the specified leaves and their nearby minimal perturbations are measured, together with the exterior identification stated below. We do not deduce these branch data from a minimum-area function.

By a \emph{foliation by minimal surfaces} of a region
$\mathcal U\subset\wt M$ we mean a smooth family
$(\Sigma^t)_{t\in I}$, with $I\subset\R$ an open interval, of pairwise disjoint
embedded minimal surfaces with $\p\Sigma^t\subset\Omega$ and
$\mathcal U=\bigcup_{t\in I}\Sigma^t$. We require $\mathcal U$ to be a smooth
embedded three-dimensional manifold with boundary
$B=\bigcup_t\p\Sigma^t$, and the leaf parameter to be a smooth submersion.
Locally the foliation is a product of an interval and a disk, or, near $B$, of an
interval and a half-disk. In the latter coordinates the leaves are $t$-level
sets and their boundaries are given by $x^1=0$. Thus the foliation is smooth
up to the full boundaries of its compact leaves. Neighbourhoods at $B$ are
understood relative to $\mathcal U$. Its interior is open in $\wt M$.

\begin{thm}[Recovery of a 3D Riemannian manifold]\label{thm:main1}
Let  $(\wt M_1,\ol g_1)$ and $(\wt M_2,\ol g_2)$ be as above. Assume that there are sets $\mathcal U_1\subset \wt M_1$ and $\mathcal U_2\subset \wt M_2$ with $M_j\subset \mathcal U_j$ which admit foliations $(\Sigma_1^t)_{t\in I}$ and $(\Sigma_2^t)_{t\in I}$ by minimal surfaces with $\p \Sigma_1^t=\p\Sigma_2^t\subset \Omega$ and $\Sigma_1^t\cap \Omega=\Sigma_2^t\cap \Omega$ for all $t\in I$. Assume that the areas of the foliating minimal surfaces and of their minimal surface perturbations agree,
\begin{eqnarray}\label{eq: area perturbations are the same}
\mathcal{A}_{\Sigma_1^t}(f)=\mathcal{A}_{\Sigma_2^t}(f)
\end{eqnarray}
for all $f\in W_{\delta_t}$ and all $t\in I$. Then there exists an isometry
\[
\ol F: (\mathcal U_1,\ol g_1) \to (\mathcal U_2,\ol g_2),\qquad \ol F^* \ol g_2 = \ol g_1,
\]
which maps $\Sigma_1^t$ onto $\Sigma_2^t$ for every $t\in I$ and which is the identity on a neighbourhood of $\bigcup_{t\in I}\p \Sigma_1^t$ in $\Omega$. If in addition every connected component of $\Sigma_1^t\cap \Omega$ meets $\p \Sigma_1^t$ for every $t\in I$ (for example, if $\Sigma_1^t\cap\Omega$ is a collar neighbourhood of $\p\Sigma_1^t$ in $\Sigma_1^t$), then $\ol F$ is the identity on $\mathcal U_1\cap \Omega$ and $\ol F$ restricts to an isometry $(M_1,\ol g_1)\to (M_2,\ol g_2)$.
\end{thm}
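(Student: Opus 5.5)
Apply Theorem~\ref{thm:minimal_surface_general} leaf by leaf, and then show that the leafwise isometries assemble into a smooth ambient isometry. For each $t\in I$ the hypothesis \eqref{eq: area perturbations are the same} and Theorem~\ref{thm:minimal_surface_general} give an isometry $F_t:\Sigma_1^t\to\Sigma_2^t$ with $F_t|_{\p\Sigma^t}=\Id$, $F_t^*A_2^t=A_1^t$ and $F_t^*q_2^t=q_1^t$. The first step is to show that $F_t$ is uniquely determined and depends smoothly on $t$. An isometry of a connected surface that is the identity on a boundary curve, with matching inward conormals, is unique: it is determined by its $1$-jet at one point. So $F_t$ is unique. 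Smooth dependence then follows by writing $F_t$ as the composition of the boundary normal (or Fermi) exponential maps of $(\Sigma_j^t,g_j^t)$ starting from $\p\Sigma^t$, continued by analytic-continuation-type uniqueness along paths. Since the metrics $g_j^t$ vary smoothly in $t$ (the foliation is smooth up to the boundary), $F_t$ does too. Define $\ol F(x)=F_{t(x)}(x)$ on $\mathcal U_1$. It is a diffeomorphism mapping leaves to leaves and equals the identity on $B=\bigcup_t\p\Sigma^t$.

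Next, compare $\ol F^*\ol g_2$ with $\ol g_1$. Write $\ol g_j=\phi_j^2\,dt^2+(\text{shift terms})+g_j^t$ in coordinates adapted to the foliation. The tangential part already agrees. The variation field of the foliation, $\p_t$, decomposes as $\phi_j N_j+X_j$, and its normal component $\phi_j$ is a Jacobi field: $L_j\phi_j=\Delta_{g_j^t}\phi_j+(|A_j|^2+\Ric(N_j,N_j))\phi_j=0$ on $\Sigma_j^t$. Because $F_t$ intertwines the Jacobi operators ($F_t^*g_2=g_1$ and $F_t^*q_2=q_1$), the pulled-back function $F_t^*\phi_2$ solves the same equation as $\phi_1$. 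The two have the same Dirichlet data on $\p\Sigma^t$, since there the foliations and metrics coincide in $\Omega$. Non-degeneracy of the leaves (Remark~\ref{rmk:leaves_nondegenerate}) then gives $F_t^*\phi_2=\phi_1$. The shift part must also be recovered. For this I would use the first variation of the induced metric along the foliation, $\p_t g^t=2\phi A+\mathcal L_{X}g^t$, which holds in both manifolds. Together with $F_t^*A_2=A_1$ and $\phi_1=F^*\phi_2$, this shows that $\mathcal L_{X_1-\ol F^*X_2'}g_1^t$ equals the Lie derivative term produced by $\p_tF_t$. Here $X_2'$ is the tangential field after correcting by $\p_t F_t$, i.e.\ one compares $\ol F_*\p_t$ with $\p_t$. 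The resulting vector field $Y=X_1-(\ol F^{-1})_*X_2$ then satisfies $\mathcal L_Y g_1^t=0$ and vanishes on $\p\Sigma^t$, where $\ol F=\Id$ near $B$. So $Y$ is a Killing field on $\Sigma_1^t$ vanishing on the boundary, hence $Y\equiv0$. It follows that $\ol F_*\p_t$ has the same normal and tangential components, so $\ol F^*\ol g_2=\ol g_1$. I expect this shift/normal-lapse matching to be the main obstacle. The delicate points are to check carefully that $d\ol F$ maps $\ol g_1$-normals to $\ol g_2$-normals with the correct lengths, and to handle smoothness up to $B$.

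Finally, for the boundary statements: $\ol F$ is the identity on $B$, and near $B$ both foliations and metrics coincide in $\Omega$. Uniqueness of the leafwise isometry, applied to $F_t$ and the identity on the connected component of $\Sigma_1^t\cap\Omega$ touching $\p\Sigma^t$, shows that $F_t=\Id$ there. This gives $\ol F=\Id$ on a neighbourhood of $B$ in $\Omega$. Under the extra assumption that every component of $\Sigma_1^t\cap\Omega$ meets $\p\Sigma_1^t$, the same argument applies on every component, so $\ol F=\Id$ on $\mathcal U_1\cap\Omega$. Since $\ol F$ is a bijection $\mathcal U_1\to\mathcal U_2$ fixing $\mathcal U_1\cap\Omega$ and $\mathcal U_j\supset M_j$, it maps $M_1=\mathcal U_1\setminus\Omega$ onto $M_2$. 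This gives the isometry $(M_1,\ol g_1)\to(M_2,\ol g_2)$.
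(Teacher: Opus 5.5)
Your proposal is correct. The first and last parts follow the paper's Lemmas~\ref{lem: isometry is identity} and~\ref{lem: F is diffeo} closely: the leafwise $F_t$ from Theorem~\ref{thm:minimal_surface_general}, uniqueness via the boundary $1$-jet, smoothness by continuing exponential-map representations along paths, $\ol F=\Id$ near $B$ and on components of $\Sigma_1^t\cap\Omega$ meeting $\p\Sigma_1^t$, and $\ol F(M_1)=M_2$.

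The lapse/shift step is where you genuinely differ from the paper. The paper builds $H$-harmonic coordinates (Lemma~\ref{lem: harmonic H-coordinates}). From $\p_t g_t=2uA+\mathcal L_\sigma g_t$ and the harmonicity of the leaf coordinates it derives the scalar elliptic system of Proposition~\ref{prop: elliptic equation for lapse and shift}. It then runs an open--closed argument on each leaf, applying unique continuation first to the Jacobi equation for $\ol u-\wt u$ and then to $\Delta_{g_1^0}(\ol\sigma^\alpha-\wt\sigma^\alpha)=0$.

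You argue globally on each leaf instead. The lapse $1/|dt|$ of $\ol F^*\ol g_2$ is $F_t^*\phi_2$, which solves the same Jacobi equation as $\phi_1$ with the same Dirichlet data, so non-degeneracy (Remark~\ref{rmk:leaves_nondegenerate}) forces equality. The same first-variation identity then shows directly that the shift difference is a Killing field of $g_1^t$ that vanishes on the boundary collar, hence vanishes identically on the connected leaf.

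Your route is shorter and coordinate-free. It needs no harmonic coordinates, no derivation of the shift equation and no open--closed bookkeeping, and it extracts strictly more information: the shift difference is Killing, not merely a field with harmonic components. The paper's route places the argument in Anderson's $H$-harmonic framework and reduces everything to scalar unique continuation. Both use only $g$, $A$ and $q$ on the leaves.

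To make your shift step precise, fix one transverse field $T$ on $\mathcal U_1$ with $dt(T)=1$, for example $\p_t$ of a foliation chart. Decompose $T$ with respect to both $\ol g_1$ and $\ol F^*\ol g_2$. Then $(\mathcal L_T\ol g_1)|_{T\Sigma^t}=\p_t g_{\alpha\beta}=(\mathcal L_T\ol F^*\ol g_2)|_{T\Sigma^t}$, because the leaf metrics agree, and the Killing field is $Y=\sigma-\wt\sigma$. As you wrote it, $Y=X_1-(\ol F^{-1})_*X_2$ with $X_2$ taken from an unrelated $\p_t$ on $\mathcal U_2$ drops the correction by $\ol F_*\p_t$ that you mention in the preceding sentence.
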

The assumption that the minimal surfaces coincide as sets on the exterior region $\Omega$, i.e., $\Sigma_1^t \cap \Omega = \Sigma_2^t \cap \Omega$, is primarily technical. As explained above, it guarantees that the perturbation data \eqref{eq: area perturbations are the same} of $\Sigma_1^t$ and $\Sigma_2^t$ are parametrized by the same boundary values.

We also prove a local version of the above theorem:
\begin{thm}\label{thm:main_local}
Let $(\Sigma_1, g_1)$ and $(\Sigma_2, g_2)$ be embedded minimal surfaces in $(\wt M_1,\ol g_1)$ and $(\wt M_2,\ol g_2)$ respectively, having a mutual smooth boundary $\p \Sigma\subset \Omega$ and satisfying $\Sigma_1\cap\Omega=\Sigma_2\cap\Omega$. Assume that $\Sigma_1$ and $\Sigma_2$ are contained in foliated regions $\mathcal U_1\subset \wt M_1$ and $\mathcal U_2\subset \wt M_2$, whose interiors contain $\Sigma_j\cap M_j$, and which are foliated by minimal surfaces $(\Sigma_1^t)_{t\in I}$ and $(\Sigma_2^t)_{t\in I}$ with $\Sigma_j=\Sigma_j^{t_0}$ for some $t_0\in I$, $\p \Sigma_1^t=\p\Sigma_2^t\subset \Omega$ and $\Sigma_1^t\cap \Omega=\Sigma_2^t\cap \Omega$ for all $t\in I$.  If condition \eqref{eq: area perturbations are the same} holds for all $f\in W_{\delta_t}$ and all $t\in I$, then there is an isometry $\ol F: (\mathcal U_1,\ol g_1) \to (\mathcal U_2,\ol g_2)$,
\[
\ol F^* \ol g_2 = \ol g_1,
\]
which maps $\Sigma_1^t$ onto $\Sigma_2^t$ for every $t\in I$ and which is the identity on a neighbourhood of $\bigcup_{t\in I}\p \Sigma_1^t$ in $\Omega$.
\end{thm}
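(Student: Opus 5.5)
The plan is to build $\ol F$ leaf by leaf from Theorem~\ref{thm:minimal_surface_general}, then show that the family of leaf isometries also matches the transversal part of the metric.

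\textbf{Step 1: leafwise isometries.} For each $t\in I$ the leaf $\Sigma_j^t$ is non-degenerate (Remark~\ref{rmk:leaves_nondegenerate}). Hence Theorem~\ref{thm:minimal_surface_general} applies and gives an isometry $F_t:\Sigma_1^t\to\Sigma_2^t$ with $F_t|_{\p\Sigma^t}=\Id$, $F_t^*g_2^t=g_1^t$ and $F_t^*A_2^t=A_1^t$. It also identifies the Jacobi potentials $F_t^*q_2^t=q_1^t$ and the DN maps of the Jacobi operators.

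Uniqueness: an isometry of a connected surface fixing the boundary pointwise, and hence with differential determined on $\p\Sigma^t$ by the inward normal, is determined by its value and differential at one point. So $F_t$ is unique. Uniqueness plus the smooth dependence of the boundary-value construction, for example via the DN-map or boundary-determination argument of \cite{carstea2024calder} run with parameters, should give smoothness of $(t,x)\mapsto F_t(x)$.

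Set $\ol F(x)=F_t(x)$ for $x\in\Sigma_1^t$. This is a diffeomorphism $\mathcal U_1\to\mathcal U_2$ preserving leaves and equal to the identity on $B$.

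\textbf{Step 2: the transversal part of the metric.} Write $\ol g_j=\phi_j^2\,dt^2+g_j^t$ along the foliation, after adjusting by a shift: the variation field $\p_t$ splits into $\phi_j N_j$ plus a tangential part. The normal speed $\phi_j$ of a foliation by minimal surfaces is a Jacobi field on each leaf, i.e.
\[
L_j^t\phi_j=0 \quad\text{on } \Sigma_j^t .
\]
Its boundary values are known in $\Omega$, because the leaves agree there and $\ol g_1=\ol g_2$ in $\Omega$. Since the Jacobi operators $L_j^t=\Delta_{g_j^t}+|A_j|^2+\Ric(N_j,N_j)$ are intertwined by $F_t$ and are non-degenerate, uniqueness of the Dirichlet problem gives $F_t^*\phi_2=\phi_1$.

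It remains to match the tangential shift, i.e.\ to show that $\ol F$ sends the normal field $N_1$ to $N_2$. Equivalently, write
\[
\p_t \ol F=d\ol F(\p_t)\quad\text{versus}\quad \p_t^{(2)}\circ\ol F ,
\]
and try to show that the tangential discrepancy vanishes. For this I would use the first variation of the induced metric,
\[
\p_t g_j^t = 2\phi_j A_j+\mathcal L_{X_j} g_j^t ,
\]
where $X_j$ is the tangential part of the variation. Transport everything to $\Sigma_1^t$ via $F_t$. The $\phi A$ terms agree, so the difference $Y$ of the pulled-back tangential fields (accounting for the $t$-derivative of $F_t$) is a Killing field on $\Sigma_1^t$ vanishing on $\p\Sigma^t$. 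Hence $Y=0$.

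\textbf{Step 3: conclusion.} With $Y=0$, the components of $\ol F^*\ol g_2$ and $\ol g_1$ coincide: normal-normal via $\phi$, tangent-tangent via $F_t$, and mixed terms via $Y=0$. Thus $\ol F$ is an isometry that maps leaves to leaves. Near $B$ inside $\Omega$ both metrics are equal and the leaves coincide. The identity map is then one leaf-preserving isometry fixing the boundary, so by uniqueness $\ol F=\Id$ on a neighbourhood of $B$.

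\textbf{Main obstacle.} I expect the main obstacle to be the regularity and smooth $t$-dependence of $F_t$, together with the careful bookkeeping of Step 2 near the boundary, where the foliation is only a half-disk product.
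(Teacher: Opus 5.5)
\textbf{The gap is in Step 1.} You only assert that $(t,x)\mapsto F_t(x)$ is smooth, but Step 2 cannot start without it. You differentiate $\ol F$ transversally when you form $d\ol F(\p_t)$ and the pulled-back tangential field, and you need $\ol F^*\ol g_2$ to be a smooth metric. The route you propose is not available. Theorem~\ref{thm:minimal_surface_general} is a uniqueness statement proved with CGO solutions, and it gives no information on how $F_t$ depends on $t$, not even continuity.

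The missing idea is the one you already used for uniqueness: an isometry is determined by its $1$-jet at one point. First show $\ol F=\Id$ on a jointly open collar of $B=\bigcup_t\p\Sigma_1^t$. A half-disk product chart around $q\in B$ can be taken inside $\Omega$, and each of its slices is connected and meets $\p\Sigma^t$. On such a slice, $F_t$ and the inclusion are local isometries of the same metric that agree to first order at the boundary, so they coincide. This is also how your Step 3 must be phrased, since the identity map is only defined on $\Sigma^t\cap\Omega$.

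Then propagate smoothness. Suppose it is known near a smooth section $p(t)\in\Sigma_1^t$. Then $c(t)=F_t(p(t))$ and $L_t=dF_t|_{p(t)}$ depend smoothly on $t$, and
\[
 F_t\bigl(\exp^{g_1^t}_{p(t)}v\bigr)=\exp^{g_2^t}_{c(t)}(L_tv),\qquad |v|_{g_1^t}<r,
\]
is jointly smooth in $(t,v)$, while $(t,v)\mapsto\exp^{g_1^t}_{p(t)}v$ is a foliation chart. Finitely many such geodesic balls along a path from the collar reach any point. The same argument for $F_t^{-1}$ gives a smooth inverse. This is the content of Lemmas~\ref{lem: isometry is identity} and~\ref{lem: F is diffeo}.

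\textbf{Step 2 is correct and takes a different route.} Granting smoothness, your Step 2 works. The paper passes to $H$-harmonic coordinates, uses the Jacobi equation for the lapse, and derives an elliptic equation for the shift by differentiating $\Delta_{g_t}x^\alpha=0$ in $t$. It then runs an open--closed argument on each leaf, with unique continuation for both equations.

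You obtain the lapse globally from non-degeneracy. The functions $\phi_j=1/|dt|_{\ol g_j}$ coincide on $\p\Sigma^t$, because $t$, the leaves and the metrics agree in $\Omega$. Hence $\phi_1-F_t^*\phi_2$ is a Jacobi field with zero Dirichlet data, and it vanishes by Remark~\ref{rmk:leaves_nondegenerate}.

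For the shift you use the identity of Lemma~\ref{coord-appendix:lem:metric-evolution} directly. Its left side $\p_tg^t$ is determined by the family of leaf metrics, which $\ol F$ matches, so $\mathcal L_Yg_1^t=0$. A Killing field that vanishes on the collar vanishes on the connected leaf. Vanishing on the boundary curve already suffices in dimension two, since $\nabla Y$ is skew and kills the boundary tangent.

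This avoids harmonic coordinates, the shift equation and unique continuation altogether. The paper's shift equation is the same identity tested only against harmonic coordinate functions, which discards part of the information you use.
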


\begin{rmk}\label{rmk:leaves_nondegenerate}
The leaves of a foliation by minimal surfaces are automatically non-degenerate, so that the area maps $\mathcal{A}_{\Sigma^t_j}$ in Theorem \ref{thm:main1} are well defined. Indeed, if $N$ is a unit normal field of the foliation and $t$ is the foliation parameter, the lapse function $u=\ol g(\p_t,N)>0$ of the leaf $\Sigma^t$ is a positive solution of the Jacobi equation $Ju:=\Delta_{g_t} u+(\abs{A}^2+\Ric(N,N))u=0$ on $\Sigma^t$, see Proposition \ref{prop: elliptic equation for lapse and shift} below. Let $\varphi>0$ be a first Dirichlet eigenfunction of $-J$ on $\Sigma^t$, that is, $-J\varphi=\lambda_1\varphi$ in $\Sigma^t$ and $\varphi=0$ on $\p\Sigma^t$. Green's formula gives
\[
\lambda_1\int_{\Sigma^t}u\varphi\, dV=\int_{\Sigma^t}\big(\varphi\, J u-u\, J\varphi\big)\,dV=-\int_{\p\Sigma^t} u\,\p_\nu\varphi\, dS>0,
\]
where $\nu$ is the outward unit normal vector of $\p\Sigma^t$ in $\Sigma^t$ and $\p_\nu\varphi<0$ on $\p\Sigma^t$ by the Hopf lemma. Hence $\lambda_1>0$, so $0$ is not a Dirichlet eigenvalue of $J$ and the stability equation of $\Sigma^t$ is well-posed.
\end{rmk}

Finally, we prove a rigidity theorem for Einstein metrics. For this we need a ``non-trapping'' condition.
\begin{defi*}[Non-trapping]
We call the pair $(\wt M,M)$ \emph{non-trapping} for minimal surfaces if every
$x\in M$ lies on a non-degenerate smooth embedded minimal surface
$\Sigma_x\subset\operatorname{Int}\wt M$ with
$\p\Sigma_x\subset\Omega=\wt M\setminus M$. Such a point lies in the interior
of $\Sigma_x$. A point of $M$ for which no such surface exists is called
inaccessible.
\end{defi*}

If \(\widetilde{M}\) and \(M\) are clear from the context, we may simply say that \(\overline{g}\) is non-trapping. The non-degeneracy requirement in the definition is not a serious restriction: if $\Sigma$ is an embedded minimal surface containing $x$ in its interior with $\p\Sigma\subset \wt M\setminus M$, then all but countably many slight shrinkings of $\Sigma$ are non-degenerate, since the Dirichlet eigenvalues of the Jacobi operator are strictly monotone under shrinking of the domain. Such shrinkings still contain $x$ and have their boundary in $\wt M\setminus M$.
The terminology originates from the setting of geodesics, which can be considered as one dimensional minimal surfaces.
A manifold \((\widetilde{M}, \overline{g})\) satisfies the non-trapping condition for geodesics if it contains no trapped geodesics. In particular, every point of $\wt M$ then lies on a geodesic with both of its endpoints in \(\widetilde M \setminus M\).

The non-trapping condition plays a natural role in recovering geometric information about \((\widetilde{M},\overline{g})\) near a point \(x\in \widetilde{M}\) from the areas of minimal surfaces.
A metric perturbation supported away from an existing minimal-surface branch leaves that branch and its areas unchanged. This observation alone does not establish equality of the complete area data, since a metric perturbation may create additional minimal surfaces.

In dimension \(3\), if $M$ is a compact subdomain of an asymptotically flat manifold containing no closed embedded minimal surfaces, the non-trapping condition follows from \cite{chodosh2018asymptotically} (see also \cite{mazet2022minimal}), where a properly embedded minimal plane through every point is constructed.
Subsets of Euclidean space and of hyperbolic space, as well as their small perturbations, also satisfy the non-trapping condition.

Our last result regards rigidity of Einstein metrics. Recall that an Einstein metric $\ol g$ on $\wt M$ is a metric that satisfies
\[
\Ric(\ol g)=\frac{R(\ol g)}{n} \ol g,
\]
where $\Ric(\ol g)$ and $R(\ol g)$ are the Ricci tensor and the scalar curvature respectively and $n=\dim \wt M$. (In dimension $3$, Einstein metrics are exactly the metrics of constant sectional curvature.) We use the condition $\pi_1(M,\p M)=0$ to mean that $\p M$ is connected and every loop in $M$ based at a boundary point can be continuously deformed, with that point fixed, to a loop lying entirely in $\p M$.
It holds if $M$ is simply connected and has connected boundary. The converse
need not hold, a solid torus is an example.

\begin{thm}\label{thm:main2}
Let $\wt M$ be a compact $3$-dimensional manifold with boundary and let $M\subset \wt M$ and $\Omega=\wt M\setminus M$ be as above, with $\pi_1(M,\p M)=0$. Let $\ol g_1$ and $\ol g_2$ be Riemannian metrics on $\wt M$ with $\ol g_1=\ol g_2$ in $\Omega$. Assume that $\ol g_2$ is an Einstein metric and that $(\wt M, \ol g_1)$ is non-trapping.
Then the condition
\[
\mathcal{A}_1=\mathcal{A}_2
\]
implies that there is a diffeomorphism $\ol F: M \to M$ fixing $\p M$ pointwise, with
\[
\ol F^* \ol g_2 = \ol g_1.
\]
The same conclusion holds if $\ol g_2$ is assumed to be Ricci flat instead of Einstein.
\end{thm}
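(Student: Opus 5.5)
We reduce Theorem~\ref{thm:main2} to a pointwise comparison of curvature followed by an analytic-continuation argument for constant-curvature metrics. Throughout we use Theorem~\ref{thm:minimal_surface_general} and the refined coefficient recovery announced for Section~\ref{subsec:k2}.

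\emph{Step 1: local recovery at every point.} Fix $x\in M$. By non-trapping, $x$ lies in the interior of a non-degenerate embedded minimal surface $\Sigma_1=\Sigma_x\subset(\wt M,\ol g_1)$ with $\p\Sigma_1\subset\Omega$. By $\mathcal A_1=\mathcal A_2$ there is a matching $\Sigma_2\subset(\wt M,\ol g_2)$ with the same area data. Theorem~\ref{thm:minimal_surface_general} then gives an isometry $F:\Sigma_1\to\Sigma_2$ fixing $\p\Sigma$, with $F^*A_2=A_1$ and $F^*q_2=q_1$.

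The Gauss equation together with $F^*g_2=g_1$ and $F^*A_2=A_1$ yields
\[
\Riem_{\ol g_1}(e_1,e_2,e_2,e_1)=\Riem_{\ol g_2}(dFe_1,dFe_2,dFe_2,dFe_1).
\]
Here $e_1,e_2$ is an orthonormal frame of $T\Sigma_1$. From $q$ we also get $\Ric_{\ol g_1}(N_1,N_1)=\Ric_{\ol g_2}(N_2,N_2)$.

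Since $\ol g_2$ has constant sectional curvature $\kappa$, we obtain
\[
\Ric_{\ol g_1}(N_1,N_1)=2\kappa,\qquad \sec_{\ol g_1}(T_x\Sigma_1)=\kappa.
\]
This alone controls only one plane and one direction at $x$. The plan is to perturb $\Sigma_1$ within its small solution branch, using the data on $W_\delta$. Perturbed surfaces through $x$ realize an open set of tangent planes at $x$, because the Jacobi operator is non-degenerate and Jacobi fields with prescribed value and gradient at $x$ can be approximated by boundary-driven solutions via Runge approximation. Their area data agree by definition of $\mathcal A_\Sigma$ on the branch.

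The identity $\sec_{\ol g_1}(P)=\kappa$ then holds for an open set of planes $P\subset T_xM$. Since sectional curvature is a quadratic form on $\Lambda^2T_xM$, this identity is algebraic in $P$. It therefore extends to all planes, so $\ol g_1$ has constant sectional curvature $\kappa$ at every point of $M$. In $\Omega$ the metrics agree anyway, and the constant $\kappa$ is the same throughout because it is read off from $\ol g_2$. The Ricci-flat case is identical with $\kappa=0$, since in dimension $3$ Ricci-flat means flat.

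\emph{Step 2: global isometry.} Now $\ol g_1$ and $\ol g_2$ both have constant curvature $\kappa$ on $\wt M$, and they coincide on the collar $\Omega$. We construct $\ol F$ by developing along paths.

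For $y\in M$, choose a path $\gamma$ from a point $p\in\p M$ to $y$. Define $\ol F(y)$ by analytic continuation of the local $\ol g_1\to\ol g_2$ isometry that equals the identity near $p$ in $\Omega$. Locally this continuation exists and is unique, since space forms are locally homogeneous and a local isometry is determined by its $1$-jet.

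Independence of $\gamma$ uses $\pi_1(M,\p M)=0$. Any two such paths, concatenated, deform with the base point fixed to a loop in $\p M$. Along $\p M$ the continuation is the identity, because the metrics agree on a neighbourhood of $\p M$ and $\p M$ is connected. Monodromy invariance under homotopy then gives a well-defined local isometry $\ol F:M\to M$ that is the identity on $\p M$.

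Applying the same construction in the reverse direction gives an inverse. Hence $\ol F$ is a diffeomorphism and $\ol F^*\ol g_2=\ol g_1$. One must also check that the continuation stays inside $M$. We would handle this by continuing along paths in $M$ and using that $\ol F$ is a proper local isometry fixing $\p M$, so its image is open and closed in $M$.

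\emph{Main obstacle.} The hardest step is Step 1's passage from a single surface through $x$ to all tangent planes at $x$. This requires showing that the branch data of $\Sigma_1$ also determine the matching of perturbed surfaces in $\ol g_2$. We would try to linearize: the second-order expansion of $\mathcal A$ at $f=0$ recovers $F^*q_2=q_1$, and third-order terms recover $A$ along perturbations. Equality of coefficients on a fixed surface should then suffice to recover the full curvature tensor at $x$. The fallback is a density argument via the Dirichlet-to-Neumann map of Section~\ref{subsec:area_dn}. Perturbed surfaces $\Sigma_1^f$ through nearby points give genuinely different tangent planes, and their matched counterparts exist because $\mathcal A_{\Sigma^f}$ is determined by $\mathcal A_\Sigma$ on a smaller neighbourhood.
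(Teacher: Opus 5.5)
Your Step 1 is correct, and it takes a genuinely different route from the paper. The paper uses a single surface through $x_0$ and recovers the full Ricci tensor there. The normal part comes from $q$, the mixed part from the Codazzi identity $\Ric(\p_\alpha,N)=\nabla_\kappa T^\kappa_\alpha-\nabla_\alpha H$, and the tangential part from the new recovery of $k^{(2)}$ in Proposition~\ref{prop:second_normal_jet}. The paper then shows that the trace-free Ricci tensor of $\ol g_1$ vanishes. You use only $g$ and $A$ through the Gauss equation, applied to an open set of tangent planes at $x$. This avoids the fourth-order linearization and CGO machinery entirely.

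Two remarks on Step 1. First, the matched partners of the perturbed surfaces need no branch argument. Your justification would in fact require $\Sigma_1^f\cap\Omega=\Sigma_2^f\cap\Omega$, which is not automatic. For small $f$, each $\Sigma_1^f$ is itself an embedded minimal surface with boundary in $\Omega$, and it is non-degenerate by continuity of Dirichlet eigenvalues. So $\mathcal A_1=\mathcal A_2$ supplies a partner directly, and your stated main obstacle is not an obstacle. Second, the open set of planes follows from surjectivity of $f\mapsto(v^f(x),dv^f(x))\in\R^3$, obtained from a Green's function argument with unique continuation (or from Runge approximation). The inverse function theorem, applied on a three-dimensional space of boundary data, then finishes it.

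The genuine gap is in Step 2. Local existence and uniqueness of the continuation are fine, but you never show that the continuation exists along every path in $M$. The target $(\wt M,\ol g_2)$ is incomplete. Along a path $\gamma\subset M$, the image path $\phi_s(\gamma(s))$ has finite length and may converge to a point of the outer boundary $\p\wt M$, after which there is nothing to continue into. Constant curvature gives no local obstruction: a small ball about an interior point of $(M,\ol g_1)$ is isometric to small balls about points of $\Omega$. The classical extension theorem for analytic local isometries needs a complete target. The hypothesis $\pi_1(M,\p M)=0$ only removes monodromy once existence along all paths and homotopies is known.

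Your proposed remedy, that $\ol F$ is a proper local isometry whose image is open and closed in $M$, is circular. It presupposes that $\ol F$ is already defined on all of $M$ with values in $M$. That final step is fine once such a map exists: given a local isometry $M\to\wt M$ that is the identity near $\p M$, extending it by the identity on $\Omega$ gives a degree-one covering of $\wt M$, hence a diffeomorphism. The paper closes exactly this gap by invoking the unique continuation result for Einstein metrics of \cite{anderson2008unique}, read with \cite{anderson2010erratum}. This places both copies of a collar enlargement $M^+$ as domains in a common Einstein manifold $P$, and a connectedness argument in $P\setminus B$ then shows the two domains coincide. You need that input, or a genuine global substitute for it, to complete Step 2.
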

In fact only one half of the condition $\mathcal{A}_1=\mathcal{A}_2$ is used in the proof, namely that for every non-degenerate embedded minimal surface $\Sigma_1$ of $(\wt M,\ol g_1)$ with $\p\Sigma_1\subset\Omega$ there is a non-degenerate minimal surface $\Sigma_2$ of $(\wt M,\ol g_2)$ satisfying \eqref{eq:surfaces_agree_in_Omega} whose area data agree with those of $\Sigma_1$.
The proof is based on showing that also $(\wt M,\ol g_1)$ is Einstein and then using uniqueness of solutions to the Einstein equations under the assumption $\pi_1(M,\p M)=0$. We refer the reader to \cite{anderson2008boundary} for a related boundary value problem for Einstein metrics, and to \cite{marx2025inverse} for a related rigidity result for renormalized areas of minimal surfaces in the asymptotically hyperbolic setting.

As a special case, this theorem gives rigidity for the Euclidean space.
\begin{cor}\label{thm:main3}
Let $\wt M\subset \R^3$, $M$ and $\Omega$ be as in Theorem \ref{thm:main2}, let $\ol g_2=e$ be the Euclidean metric and let $\ol g_1$ be a non-trapping metric on $\wt M$ with $\ol g_1=e$ in $\Omega$.
Then the condition $
\mathcal{A}_1=\mathcal{A}_2$ implies that $(M,\ol g_1)$ is isometric to $(M,e)$.
\end{cor}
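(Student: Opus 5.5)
This follows from Theorem~\ref{thm:main2} applied with $\ol g_2=e$. The only work is to check its hypotheses in the Euclidean setting.

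First, the Euclidean metric is flat, so $\Ric(e)=0$. Thus $e$ is Ricci flat, and trivially Einstein with $R(e)=0$. This puts us in the last sentence of Theorem~\ref{thm:main2}.

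Second, the remaining hypotheses are assumed in the corollary, since $\wt M$, $M$ and $\Omega$ are taken as in Theorem~\ref{thm:main2}:
\begin{itemize}
\item $\wt M$ is a compact $3$-manifold with boundary and $M\subset\wt M$ with $\Omega=\wt M\setminus M$ a collar of $\p M$.
\item $\pi_1(M,\p M)=0$.
\item $\ol g_1=\ol g_2=e$ in $\Omega$.
\item $(\wt M,\ol g_1)$ is non-trapping.
\item $\mathcal A_1=\mathcal A_2$.
\end{itemize}
The manifold $\wt M$ sits in $\R^3$, so $e$ restricted to $\wt M$ is a smooth metric up to the boundary. Its area data are therefore the branch-area maps defined in the introduction.

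Theorem~\ref{thm:main2} then gives a diffeomorphism $\ol F:M\to M$, fixing $\p M$ pointwise, with $\ol F^*e=\ol g_1$. This is the required isometry $(M,\ol g_1)\to(M,e)$.

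I see no real obstacle; the whole argument rests on Theorem~\ref{thm:main2}. The one point worth flagging is that non-trapping is assumed for $\ol g_1$, the unknown metric, not for $e$. That matches the theorem, which needs non-trapping exactly for $\ol g_1$. Non-trapping of $e$, which holds for subsets of Euclidean space, is not needed.
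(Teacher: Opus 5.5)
Your proposal is correct and matches the paper's approach. The paper gives no separate proof and treats the corollary as the special case of Theorem~\ref{thm:main2} with $\ol g_2=e$, which is Ricci flat and hence Einstein with constant zero. Your hypothesis check is accurate, including the observation that non-trapping is required only for $\ol g_1$.
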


\subsection{Motivation}
The generalized boundary rigidity problem asks whether the areas of minimal
surfaces spanned by boundary curves determine the Riemannian metric. In one
dimension this reduces to the classical boundary rigidity problem, whose data
are the lengths of minimizing geodesics. The generalization is natural because
minimal surfaces are a higher-dimensional analogue of geodesics, and their
areas provide a richer set of data.

{\color{blue}
Recent works, which we discuss in Section \ref{subsec:novelty} below, have
shown that replacing geodesics by minimal surfaces leads to genuinely new
phenomena. One such development is the finite-dimensional viewpoint on
generalized boundary rigidity developed in
\cite{Alvarez-Lefeuvre-Lowe-Smith}. There the boundary data are given by areas
of minimal surfaces (or more general surfaces) anchored to a \(3\)-dimensional
family of circles on the boundary of the manifold. As discussed in
\cite[Sections~1.3 and 1.6.2]{Alvarez-Lefeuvre-Lowe-Smith}, this viewpoint fits
into the Gromov--Labourie program of finding higher-dimensional replacements
for the geodesic flow, a program that gained momentum after the Kahn--Markovic
theorem. See also \cite[Section 1.3]{busch2025generalized} for discussion on the
dimensionality of the data.
}

The problem also connects to the AdS/CFT correspondence. This duality, proposed by Maldacena~\cite{maldacena1999large}, establishes an equivalence between conformal field theories (CFTs) and the geometry of Anti-de Sitter (AdS) spacetimes, often called the ``bulk''. A notable proposal within this correspondence, introduced by Ryu and Takayanagi~\cite{ryu2006holographic,ryu2006aspects}, connects the entanglement entropies of a CFT to the areas of minimal surfaces in an AdS spacetime. In this framework, a subregion $A$ on the conformal boundary of the AdS spacetime determines a minimal surface $\Sigma$ in the bulk, anchored at infinity to the boundary $\partial A$ of $A$. The Ryu--Takayanagi conjecture posits that the entanglement entropy of $A$ in the CFT equals $(4G)^{-1}\operatorname{Vol}(\Sigma)$, where $G$ is Newton's constant and $\operatorname{Vol}(\Sigma)$ denotes the volume of $\Sigma$. Entanglement entropy measures the quantum correlations between $A$ and its complement.

In the physics literature, the generalized boundary rigidity problem is called the \emph{bulk reconstruction problem}. Progress has been made in  symmetric settings for example in~\cite{bilson2008extracting,bilson2011extracting,fonda2015shape,jokela2021towards,jokela2019notes, Ji2025}, with formal arguments for reconstruction outlined in~\cite{bao2019towards}. We refer to \cite{jokela2025bulk,carstea2024calder} for further physics references on the matter.

Arguably, progress in bulk reconstruction has largely been limited by the
mathematical tools available in physics. The work~\cite{jokela2025bulk},
building on~\cite{carstea2024calder}, applies methods from inverse problems,
notably higher-order linearization. The present study continues this direction
and develops tools for bulk reconstruction, including the recovery of the full
Riemannian metric.

% Arguably, progress in bulk reconstruction has been constrained by the limited mathematical tools traditionally used in physics. The work~\cite{jokela2025bulk}, building on~\cite{carstea2024calder}, marks a shift by applying modern methods from inverse problems, such as the higher-order linearization method. The present study expands this direction by offering new state-of-the-art tools for bulk reconstruction, including the recovery of the full Riemannian metric and the use of local perturbations of minimal surfaces. These features make the generalized problem both challenging and rich in new applications.

Before turning to the novelties of our results, we also mention the related works
\cite{nurminen2023inverse,nurminen2024inverse,2509.22078} on inverse problems
for minimal and prescribed mean curvature surfaces, and \cite{2606.24682v1} on
the linearized minimal surface problem.

\subsection{Novelty of the results}\label{subsec:novelty}
{\color{blue}
As the field is evolving rapidly, we list below the main novelties compared to
the closest related works \cite{Tracey, carstea2024calder, busch2025generalized,
Alvarez-Lefeuvre-Lowe-Smith} for the reader's convenience. In
\cite{Tracey, carstea2024calder} the data are infinite-dimensional, as in the
present paper, whereas in \cite{busch2025generalized,
Alvarez-Lefeuvre-Lowe-Smith} the data are finite-dimensional, and in fact the
problem in \cite{Alvarez-Lefeuvre-Lowe-Smith} is well-posed in the PDE sense.
The former two works use a Calder\'on-type approach, while the latter two use
integral geometry.
}

 \textbf{Minimal surfaces glued together.} The earlier work \cite{carstea2024calder} recovered only individual minimal surfaces, including their induced metrics and second fundamental forms, but not the ambient metric. Our foliation theorems glue these surfaces together.

{\color{blue}
\textbf{Bulk manifold unknown.} 
The earlier work \cite{Tracey} and the recent
papers \cite{busch2025generalized, Alvarez-Lefeuvre-Lowe-Smith} are formulated
in settings where the bulk manifold is fixed. In particular, in
\cite{Alvarez-Lefeuvre-Lowe-Smith} the manifold is a ball, as in the boundary
rigidity conjecture proposed in \cite{Tracey}, so there is no topology to
recover. Our foliated theorems compare potentially different manifolds and
recover them up to isometry. The problems are complementary:
\cite{Alvarez-Lefeuvre-Lowe-Smith} recovers the metric from a sharp
finite-dimensional family of boundary circles, while our method uses
infinite-dimensional boundary data in exterior formulation.

% The earlier work \cite{Tracey} and the recent
% papers \cite{busch2025generalized, Alvarez-Lefeuvre-Lowe-Smith} are formulated
% in settings where the bulk manifold is fixed. In particular, in
% \cite{Alvarez-Lefeuvre-Lowe-Smith} the manifold is a ball, as in the boundary
% rigidity conjecture proposed in \cite{Tracey}, so there is no topology to recover. 
% Our foliated theorems compare potentially different manifolds and recover them
% up to isometry.  The problems are complementary:
% \cite{Alvarez-Lefeuvre-Lowe-Smith} recovers the metric from a sharp
% finite-dimensional family of boundary circles,
% without compact-support or vanishing-jet assumptions, while our method uses
% infinite-dimensional boundary data in exterior formulation.
}

{\color{blue}
\textbf{No topological assumptions on the surfaces.} 
In both \cite{Tracey} and
\cite{Alvarez-Lefeuvre-Lowe-Smith} the foliating minimal surfaces are discs,
spanned in the latter case by a \(3\)-dimensional family of circles on the
boundary. The finite-dimensional family used there is sufficient for stable
microlocal inversion and is sharp. In our work the foliating minimal
surfaces can be of arbitrary topology and need not be graphical over a fixed
surface. This allows us to treat manifolds with non-trivial topology and to use
such foliations, which is essential for applications to general
\(3\)-manifolds.
}

{\color{blue}
\textbf{Local perturbations are enough.}
 Our method recovers the metric by
varying minimal surfaces locally. In the local theorem we assume the existence
of foliations of neighbourhoods of the reference surfaces whose corresponding
leaves have equal area maps. By contrast, one of the main achievements of
\cite{Alvarez-Lefeuvre-Lowe-Smith} is proving the existence of a global
foliation, with boundary curves lying on the boundary of the manifold rather
than outside it. Their recovery method then relies on the global family of surfaces
they constructed. In a sense our method propagates along the minimal surfaces,
in comparison to the mentioned works and \cite{busch2025generalized}, which
propagate in (co)normal directions. It is possible that the infinite-dimensional
nature of the boundary data is what allows local perturbations to suffice,
though we do not establish this here.
}

\textbf{No curvature assumptions.} The work \cite{Tracey} requires the metric to be either close to Euclidean or to satisfy thinness or straightness conditions. These are restrictive geometric hypotheses that limit the applicability of the result. In contrast, our theorems require no curvature bounds on the metric.

\textbf{No foliation assumptions.} The earlier works \cite{Tracey, busch2025generalized} do not recover a metric without a foliation assumption.  Our Theorem~\ref{thm:main2} establishes global rigidity for Einstein or Ricci-flat metrics on $3$-manifolds under a non-trapping condition, without assuming a foliation condition.

\textbf{Method.} In the foliated results, we show that the recovered induced metric,
second fundamental form and Jacobi potential determine the lapse and shift,
and hence the ambient metric. The additional recovery of the second normal
metric derivative identifies the full ambient Ricci tensor along a measured
surface. This second part is used for Einstein rigidity.

	\subsection*{Acknowledgments}
    M.L. was partially supported by the Advanced
Grant project 101097198 of the European Research Council, Centre of Excellence
of Research Council of Finland (grant 336786) and the FAME flagship of the Research Council of Finland (grant 359186). 
T.L. is partly supported by the Research Council of Finland (Centre of Excellence in Inverse Modelling and Imaging and FAME Flagship, grants 353091 and 359208).

The authors would like to thank Thibault Lefeuvre for helpful discussion.
% Authors: approve final disclosure wording and add funding acknowledgments before submission.

\section{The minimal surface equation and its higher order linearizations}\label{sec:recovery_of_metric_quantities}

We first record the local boundary problem and the quantities recovered in
\cite{carstea2024calder}. We then use the higher-order integral identities to recover the
remaining second normal derivatives of the metric. Throughout the paper,
\(\Delta_g=\divergence_g\nabla_g\) is the nonpositive Laplace--Beltrami operator.
The scalar second fundamental form is
\(A(X,Y)=\ol g(\ol\nabla_XN,Y)\), and \(H=\tr_gA\).
For a surface with boundary, \(\nu\) denotes the unit normal vector of the boundary within the surface. It is taken to be outward unless stated otherwise.

Let \(\Sigma\) be a non-degenerate minimal surface and choose signed Fermi
coordinates relative to a unit normal \(N\). Thus
\begin{equation}\label{eq:fermi_metric}
 \ol g=ds^2+g_s,\qquad g=g_0,\qquad N|_\Sigma=\p_s.
\end{equation}
Greek indices refer to surface coordinates, and Latin indices to ambient
coordinates. In the hypersurface calculations below the same convention is used
in arbitrary dimension. Set
\begin{equation}\label{eq:hkd}
 \begin{split}
 k(s)&=g_s^{-1},\qquad
 h(s)=\tr(g_s^{-1}\p_sg_s),\\
 k^{(l)}&=\p_s^lk(s)|_{s=0},\qquad
 h^{(l)}=\p_s^lh(s)|_{s=0},\qquad l=1,2,3.
 \end{split}
\end{equation}
The tensors \(k^{(l)}\) are contravariant and act on covectors. We write
\(k^{(l)}(du,dv)\) for this pairing. Minimality gives \(h(0)=2H=0\).
Define the scalar density ratio
\begin{equation}\label{eq:rho_definition}
 \rho(x,s)=\left(\frac{\det g_s(x)}{\det g(x)}\right)^{1/2},\qquad
 q=\frac12h^{(1)}.
\end{equation}
The graph of \(u\) has area
\begin{equation}\label{eq:graph_energy}
 \mathcal E(u)=\int_\Sigma
 \rho(x,u)\sqrt{1+k(u)(du,du)}\,dV_g.
\end{equation}
In particular, \(\rho(0)=1\), \(\p_s\rho(0)=0\), and
\(\p_s^2\rho(0)=q\).

\subsection{The local equation and the boundary data}\label{subsec:area_dn} 

The Euler--Lagrange equation of \eqref{eq:graph_energy} is
\begin{equation}\label{eq:minimal_surface_general}
 -\frac{1}{\sqrt{\det g_u}}\,
 \p_\alpha\left(
 \frac{\sqrt{\det g_u}\,g_u^{\alpha\beta}\p_\beta u}
 {\sqrt{1+g_u^{\mu\nu}\p_\mu u\p_\nu u}}
 \right)+\mathfrak f(x,u,du)=0,
\end{equation}
where \(g_u(x)=g(x,u(x))\) and
\[
 \mathfrak f(x,u,du)
 =\frac{(\p_sk)(x,u)(du,du)}{2\sqrt{1+k(u)(du,du)}}
 +\frac12\sqrt{1+k(u)(du,du)}\,h(x,u).
\]
Here \(\p_sk\) is differentiated in the Fermi variable before evaluation at
\(s=u(x)\). The derivative of the graph equation at \(u=0\) is
\(L=-\Delta_g+q\). Its Dirichlet realization is invertible by non-degeneracy.
To apply the implicit function theorem, write \(u=Ef+w\), where \(E\) is a
bounded extension of boundary values and \(w\) has zero boundary trace. The
derivative in \(w\) is \(L\), as a map from zero-trace \(C^{2,\gamma}\) functions
to \(C^{0,\gamma}\) functions. Consequently, for \(0<\gamma<1\) and some
\(\delta>0\), the problem has a unique small solution \(u(f)\) for
\[
 U_\delta=\{f\in C^{2,\gamma}(\p\Sigma):
                   \|f\|_{C^{2,\gamma}}<\delta\}.
\]
The solution map is smooth, and smooth boundary values give smooth solutions.
The branch-area map is \(\mathcal A_\Sigma(f)=\mathcal E(u(f))\).

We use the fixed outward unit normal \(\nu_g\) of \(\p\Sigma\) in
\((\Sigma,g)\) to define
\(\Lambda_{\ol g}(f)=\p_{\nu_g}u(f)|_{\p\Sigma}\).
By \cite[Lemma~3.2]{carstea2024calder}, the area map and the known metric near
\(\p\Sigma\times\{0\}\) determine the DN map. The normal used there depends
on \(g_f\), but conversion to the fixed normal \(\nu_g\) uses only the known
boundary metric and the tangential derivative of \(f\). Thus equality of the
area maps implies equality of \(\Lambda_{\ol g}\) and its derivatives.

\subsection{Previously recovered coefficients}\label{subsec:known_coefficients}

Let \(\Sigma_j\subset(\wt M_j,\ol g_j)\), \(j=1,2\), have matching exterior
portions and equal branch-area maps. Choose their normals to agree in the
exterior. All normal metric derivatives and their surface derivatives then agree
near the common boundary. Theorem~1.2 and the coefficient recovery in
Section~6 of \cite{carstea2024calder} give the boundary-fixing isometry
\(F:\Sigma_1\to\Sigma_2\) of Theorem~\ref{thm:minimal_surface_general}, with
\begin{equation}\label{eq:known_coefficients}
 F^*g_2=g_1,\qquad F^*k_2^{(1)}=k_1^{(1)},\qquad
 F^*h_2^{(1)}=h_1^{(1)},\qquad F^*h_2^{(2)}=h_1^{(2)}.
\end{equation}
In particular,
\begin{equation}\label{eq:q_recovered}
 q_j=\frac12h_j^{(1)},\qquad F^*q_2=q_1.
\end{equation}
These are the external recovery results used below. The boundary-normalization
lemma in that paper is not, by itself, the coefficient-recovery theorem.

The isometry \(F\) is the identity in a sufficiently small common collar of
\(\p\Sigma\). To see this, its differential fixes the boundary tangent and
maps the inward unit normal vector of \(\p\Sigma\) in \((\Sigma_1,g_1)\) to that in \((\Sigma_2,g_2)\). The metrics and
surfaces coincide in the exterior, so these normal vectors coincide. The isometry
therefore fixes the initial data of the inward normal geodesics, and uniqueness
of geodesics gives the assertion. In common coordinates \(F-\Id\), rather than
\(F\), is flat at the boundary. In particular \(DF=I\) there.

Pull the second metric family back by \(F\) and work on
\(\Sigma=\Sigma_1\). Write \(\tilde g_s=F^*g_{2,s}\) and \(g_s=g_{1,s}\).
Then
\[
 \tilde g_0=g_0=g,\qquad \tilde q=q,\qquad
 \tilde k^{(1)}=k^{(1)},\qquad \tilde h^{(2)}=h^{(2)}.
\]
The two full Fermi metric families agree in a collar of \(\p\Sigma\), for
sufficiently small \(s\). The remaining task in this section is to identify
\(k^{(2)}\).

\subsection{Higher-order integral identities}\label{subsec:fourth_area}

We use the higher-order linearization identities of
\cite[Lemmas~3.3--3.5]{carstea2024calder}. We record the formulas needed here
with the convention \(\Delta_g=\divergence_g\nabla_g\) and the fixed outward
normal \(\nu_g\).
Let \(f_1,\ldots,f_4\in C^\infty(\p\Sigma)\), let
\(u_\epsilon=u(\sum_i\epsilon_i f_i)\), and set
\begin{equation}\label{eq:solution_derivatives}
 v^i=\p_{\epsilon_i}u_\epsilon|_0,\qquad
 w^{ij}=\p_{\epsilon_i}\p_{\epsilon_j}u_\epsilon|_0,\qquad
 w^{ijk}=\p_{\epsilon_i}\p_{\epsilon_j}\p_{\epsilon_k}u_\epsilon|_0.
\end{equation}
The first derivatives solve
\begin{equation}\label{linear_eq}
 \begin{cases}
 (-\Delta_g+q)v^i=0&\text{in }\Sigma,\\
 v^i=f_i&\text{on }\p\Sigma,
 \end{cases}
\end{equation}
and every solution derivative of order at least two has zero boundary trace.
Writing \(L=-\Delta_g+q\), the second derivatives satisfy
\begin{equation}\label{eq:second_linearized_equation}
 \begin{split}
 Lw^{ij}={}&\divergence_g\bigl(v^i k^{(1)}dv^j+v^j k^{(1)}dv^i\bigr)\\
 &-k^{(1)}(dv^i,dv^j)-\frac12h^{(2)}v^iv^j,
 \end{split}
\end{equation}
where \((k^{(1)}dv)^\alpha=(k^{(1)})^{\alpha\beta}\p_\beta v\).
The corresponding second-order integral identity is
\begin{align}\label{eq:second_integral_id}
 \int_{\p\Sigma}f_1\p_{\nu_g}w^{23}\,dS_g
 ={}&\int_\Sigma\bigl[v^1k^{(1)}(dv^2,dv^3)
       +v^2k^{(1)}(dv^1,dv^3)\notag\\
 &\hspace{12mm}+v^3k^{(1)}(dv^1,dv^2)
       +\tfrac12h^{(2)}v^1v^2v^3\bigr]dV_g\notag\\
 &-\int_{\p\Sigma}v^1\bigl[
 v^2k^{(1)}(\nu_g^\flat,dv^3)
 +v^3k^{(1)}(\nu_g^\flat,dv^2)\bigr]dS_g.
\end{align}
In particular, the \(h^{(2)}\) term has the positive sign in
\eqref{eq:second_integral_id}.

After the pullback in Section~\ref{subsec:known_coefficients}, the two metrics
have the same \(g,q,k^{(1)},h^{(2)}\). Dirichlet uniqueness in
\eqref{linear_eq} and \eqref{eq:second_linearized_equation} therefore gives
the same \(v^i\) and \(w^{ij}\) for both metrics. Set
\[
 \kappa=k^{(2)}-\tilde k^{(2)},\qquad
 \chi=h^{(3)}-\tilde h^{(3)}.
\]
Subtract the third-order integral identities of
\cite[Lemma~3.5]{carstea2024calder}. The boundary measurements agree, the
boundary coefficient terms cancel because the Fermi metric families agree in a
collar, and all terms involving only the previously recovered coefficients,
\(v^i\) and \(w^{ij}\) cancel. The remaining identity, with the signs and
summation written explicitly, is
\begin{equation}\label{eq:k2 integral identity}
 \int_\Sigma\left[
 \sum_{i<j}v^kv^\ell\kappa(dv^i,dv^j)
       +\frac12\chi v^1v^2v^3v^4\right]dV_g=0.
\end{equation}
Here \(\{k,\ell\}\) is the complement of \(\{i,j\}\) in
\(\{1,2,3,4\}\). The sum has exactly six terms, with no normalization.
The identity is initially real multilinear. Its complex-linear extension permits
complex-valued solutions of \eqref{linear_eq}. No complex-valued minimal
surfaces are required. Since it is an identity between derivatives at zero, it
holds for arbitrary smooth boundary directions. Thus boundary norms of the
special solutions below need not stay in the fixed small data ball: for each
choice of directions, the differentiation parameters are taken sufficiently
small.

\subsection{Determination of \texorpdfstring{$k^{(2)}$}{k(2)}}\label{subsec:k2}

\begin{prop}\label{prop:second_normal_jet}
Under the hypotheses of Theorem~\ref{thm:minimal_surface_general}, its isometry
\(F\) also satisfies \(F^*k_2^{(2)}=k_1^{(2)}\). Consequently all normal
derivatives of the tangential Fermi metric of orders at most two agree after
pullback by \(F\).
\end{prop}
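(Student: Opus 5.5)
We plan to show that \(\kappa=k^{(2)}-\tilde k^{(2)}\) vanishes identically. The input is the integral identity \eqref{eq:k2 integral identity}, which holds for all complex solutions \(v^1,\dots,v^4\) of \eqref{linear_eq}. Here \(\kappa\) is a symmetric contravariant \(2\)-tensor and \(\chi\) a scalar, both supported away from a collar of \(\p\Sigma\) because the Fermi families agree there. We work in local isothermal coordinates \(z\) on \((\Sigma,g)\), where \(g=e^{2\phi}|dz|^2\). We split \(\kappa\) into its \(g\)-trace part \(\kappa(dz,d\bar z)\) and its trace-free part \(\kappa(dz,dz)\) together with its conjugate \(\kappa(d\bar z,d\bar z)\). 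The tool is Bukhgeim-type complex geometrical optics solutions of \(L=-\Delta_g+q\), as used in \cite{carstea2023inverse,carstea2024calder}:
\[
 v=e^{\Phi/\tau}(a+r_\tau),
\]
with \(\Phi\) holomorphic and Morse, \(a\) holomorphic, and \(\|r_\tau\|=o(1)\) in suitable norms. Such a solution satisfies \(dv=\tau^{-1}\p_z\Phi\,v\,dz+O(1)\), and analogous solutions exist with antiholomorphic phase.

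\emph{Trace part.} We take \(v^1=e^{\Phi/\tau}(a+r)\) and \(v^2=e^{-\bar\Phi/\tau}(\bar b+\tilde r)\), so that \(v^1v^2\sim e^{2i\operatorname{Im}\Phi/\tau}a\bar b\). We keep \(v^3,v^4\) as arbitrary fixed solutions. The only term of order \(\tau^{-2}\) in \eqref{eq:k2 integral identity} is
\[
 -\tau^{-2}\,v^3v^4\,\p_z\Phi\,\overline{\p_z\Phi}\,\kappa(dz,d\bar z)\,v^1v^2 .
\]
Every other term is \(O(\tau^{-1})\). These are the terms with exactly one gradient of a CGO, together with the \(\chi\) term and \(\kappa(dv^3,dv^4)\), which are \(O(1)\). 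Multiplying by \(\tau^{2}\) and applying stationary phase at a nondegenerate critical point \(p\) of \(\operatorname{Im}\Phi\) gains a factor \(\tau\) for the leading term. The one-gradient terms must therefore be shown to be \(o(\tau)\) after multiplication by \(\tau^2\). That holds because they also oscillate with the same phase, so stationary phase gives them an extra \(\tau\) as well. We must still be careful about critical points: \(\p_z\Phi\) vanishes exactly at \(p\).

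This last point is the source of a real difficulty. The leading amplitude \(|\p_z\Phi|^2\kappa(dz,d\bar z)v^3v^4a\bar b\) vanishes at the critical point, so plain leading-order stationary phase gives nothing. We therefore intend to use the next-order term in the expansion, or alternatively to place the critical point of the phase away from the point of interest by an \(L^2\)-density/Riemann–Lebesgue argument in the style of the Bukhgeim approach. In the latter argument we vary \(\Phi\) and integrate against the phase, which yields \(\kappa(dz,d\bar z)v^3v^4=0\) pointwise. Choosing \(v^3,v^4\) nonvanishing at a given point by Runge approximation then gives that the trace part of \(\kappa\) vanishes.

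\emph{Trace-free part.} We take \(v^1=e^{\Phi/\tau}(a+r)\) and \(v^2=e^{-\Phi/\tau}(b+\tilde r)\), both with holomorphic phase. The product \(v^1v^2\) is then non-oscillatory. The \(\tau^{-2}\) term is \(-\tau^{-2}(\p_z\Phi)^2\kappa(dz,dz)ab\,v^3v^4\), and all remaining terms are \(O(\tau^{-1})\). Letting \(\tau\to0\) gives
\[
 \int_\Sigma \kappa(dz,dz)\,(\p_z\Phi)^2ab\,v^3v^4\,dV_g=0
\]
for all holomorphic \(\Phi,a,b\) and all solutions \(v^3,v^4\). In a second limit we choose \(v^3,v^4\) to be CGOs with phases \(\Psi\) and \(-\bar\Psi\) and apply stationary phase at a critical point of \(\operatorname{Im}\Psi\). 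Since \(\p_z\Phi\) can be taken nonzero at that point, this localizes to \(\kappa(dz,dz)=0\) at arbitrary interior points. Taking conjugates handles \(\kappa(d\bar z,d\bar z)\).

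Once \(\kappa=0\), we have \(F^*k_2^{(2)}=k_1^{(2)}\). Writing \(g_s=k(s)^{-1}\) and differentiating twice, \(\p_s^2g_s|_0\) is expressed through \(g\), \(k^{(1)}\) and \(k^{(2)}\), all of which are now known to agree. This gives the final assertion of the proposition. The main obstacle we expect is the bookkeeping of orders in \(\tau\): we must justify that every term other than the selected leading one is genuinely of lower order after stationary phase, including the remainders \(r_\tau\) and the degeneracy of \(\p_z\Phi\) at the critical point.
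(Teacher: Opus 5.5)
\textbf{The gap is the trace part.} Your CGO plan for $\kappa(dz,d\bar z)$ cannot succeed, and the reason is structural, not bookkeeping. The identity \eqref{eq:k2 integral identity} does not see the pure-trace part of $\kappa$ independently of $\chi$.

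Suppose $\kappa=c\,g^{-1}$ with $c$ supported away from $\p\Sigma$. Since $\Delta_gv^i=qv^i$, one has $\sum_{i<j}v^kv^\ell\langle dv^i,dv^j\rangle_g=\tfrac12\Delta_gP-2qP$ with $P=v^1v^2v^3v^4$. By Green's formula that part of the integral equals $\int_\Sigma(\tfrac12\Delta_gc-2qc)P\,dV_g$. Hence replacing $(\kappa,\chi)$ by $(\kappa+c\,g^{-1},\,\chi+4qc-\Delta_gc)$ leaves the identity true for every choice of solutions. No choice of phases, higher-order stationary phase, or relocation of critical points can isolate $c$.

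You can see this in your own computation. Since $-\tau^{-2}|\p_z\Phi|^2e^{(\Phi-\overline\Phi)/\tau}=\p_z\p_{\bar z}e^{(\Phi-\overline\Phi)/\tau}$, your ``leading'' $\tau^{-2}$ term is, after two integrations by parts, of the same order as the $\chi$ term, and likewise for the one-gradient terms. Moreover, the zeros of $\p_z\Phi$ are exactly the critical points of $\operatorname{Im}\Phi$, so the degeneracy cannot be moved away.

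The missing idea is that the trace is already known algebraically. Differentiating $g_s^{-1}g_s=I$ twice and using the definition of $h$ gives
\[
\tr(gk^{(2)})=\tr\bigl(g^{-1}(\p_sg_s|_0)g^{-1}(\p_sg_s|_0)\bigr)-h^{(1)}=4|A|_g^2-h^{(1)}.
\]
Both $A$ and $h^{(1)}$ are among the previously recovered coefficients \eqref{eq:known_coefficients}. So $\tr(g\kappa)=0$ with no CGO argument at all.

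With that replacement, your trace-free argument does finish the proof, and it is a different route from the paper's. You take opposite holomorphic phases $\pm\Phi/\tau$ for $v^1,v^2$, so $v^1v^2$ does not oscillate. The $\tau^{-2}$ coefficient then gives $\int_\Sigma\kappa(d\Phi,d\Phi)\,ab\,v^3v^4\,dV_g=0$; the trace part drops out automatically since $g^{-1}(dz,dz)=0$. Density of products of two solutions then yields $\kappa(d\Phi,d\Phi)ab=0$ pointwise. This needs only $\|r\|_{L^2}=o(1)$ and $\|dr\|_{L^2}=o(\tau^{-1})$ in the first limit, plus the standard density result.

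The paper instead fixes one real solution $v^1$ and uses three CGOs with phases $\Psi+\Phi$, $-\Psi+\Phi$, $-2\overline\Phi$, whose product has modulus one. It integrates the $dv^1$ terms by parts and relies on trace-freeness to kill the $(1,0)$--$(0,1)$ pairings. A single stationary phase then localizes $K=v^1\kappa$ at $p$, and unique continuation for $v^1$ gives $\kappa=0$. Your version avoids the tensor-valued asymptotic and the final unique-continuation step. The paper's version requires trace-freeness as an input, which is exactly what your proposal lacked.
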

\begin{proof}
We use \eqref{eq:k2 integral identity} on the common surface. Differentiating
the identity \(g_s^{-1}g_s=I\) gives
\[
 gk^{(2)}g=2(\p_sg_s|_0)g^{-1}(\p_sg_s|_0)-\p_s^2g_s|_0.
\]
Together with the definition of \(h^{(1)}\), this implies
\begin{equation}\label{eq:trace_kappa}
 \tr(gk^{(2)})
   =\tr\bigl(g^{-1}(\p_sg_s|_0)g^{-1}(\p_sg_s|_0)\bigr)-h^{(1)}
   =4|A|_g^2-h^{(1)},\qquad \tr(g\kappa)=0.
\end{equation}
The last equality follows by subtracting the trace identity for the two
metric families. Their induced metrics, second fundamental forms and
\(h^{(1)}\) already agree by \eqref{eq:known_coefficients}.
The tensor \(\kappa\) is real and symmetric and vanishes in a collar of the
boundary, by Section~\ref{subsec:known_coefficients}.

Choose once and for all a real nontrivial solution \(v^1\) of
\eqref{linear_eq}, by prescribing a nonzero real boundary value. For
\(v^2,v^3,v^4\), use the three complex geometric optics solutions from
\cite[Section~4.2 and Proposition~5.1]{carstea2024calder}:
\begin{equation}\label{eq:sols_for_second_proof}
 \begin{split}
 v^2&=e^{(\Psi+\Phi)/h}(a+r_h),\\
 v^3&=e^{(-\Psi+\Phi)/h}(a+r'_h),\\
 v^4&=e^{-2\overline\Phi/h}(\overline a+\widehat r_h).
 \end{split}
\end{equation}
Here \(h\downarrow0\). For a point \(p\) in the dense set of admissible
critical points of that construction, \(\Phi\) is holomorphic Morse with
\(\Phi(p)=0\) and \(d\Phi(p)=0\). The holomorphic function \(\Psi\) has
nonvanishing differential, and the choices ensure that
\(\Theta_2=\Psi+\Phi\) and \(\Theta_3=-\Psi+\Phi\) have no critical points.
The amplitude satisfies \(a=1\) to sufficiently high order at \(p\), and
vanishes to sufficiently high order at every other critical point of \(\Phi\).
The orders can be chosen as large as required by the cited construction.
The remainder estimates there imply, in particular, uniform \(L^3\) bounds for
the three amplitude factors in \eqref{eq:sols_for_second_proof}.

Put \(P_h=v^2v^3v^4\). Its total exponential is
\(e^{2(\Phi-\overline\Phi)/h}\), of modulus one. H\"older's inequality therefore
gives \(\|P_h\|_{L^1}\leq C\). The three terms in
\eqref{eq:k2 integral identity} that contain \(dv^1\) combine into
\(\kappa(dP_h,dv^1)\). Integration by parts gives
\begin{equation}\label{eq:cgo_lower_terms}
 \int_\Sigma\kappa(dP_h,dv^1)\,dV_g
 =-\int_\Sigma P_h\nabla_\alpha
                  (\kappa^{\alpha\beta}\p_\beta v^1)\,dV_g=O(1).
\end{equation}
There is no boundary term because \(\kappa\) vanishes near \(\p\Sigma\).
Also \(\int_\Sigma\chi v^1P_h\,dV_g=O(1)\). Thus
\begin{equation}\label{eq:kappa_O1}
 T_h:=\int_\Sigma v^1\bigl[
 v^4\kappa(dv^2,dv^3)+v^3\kappa(dv^2,dv^4)
                  +v^2\kappa(dv^3,dv^4)\bigr]dV_g=O(1).
\end{equation}

We now use the precise tensor asymptotic, not merely the norm bounds for
individual remainders. Proposition~5.1, equation~(104), of
\cite{carstea2024calder} applies to the real symmetric trace-free tensor
\(K=v^1\kappa\), since it is flat at the boundary. It gives
\begin{equation}\label{eq:cgo_external_asymptotic}
 T_h=\frac1{h^2}\int_\Sigma e^{4i\operatorname{Im}\Phi/h}
     a^2\overline a\,K(d\Theta_2,d\Theta_3)\,dV_g+o(h^{-1}).
\end{equation}
The coefficient in the integral on the right is smooth and independent of
\(h\). Its boundary terms vanish and the amplitude suppresses the other
critical points. Choose the local holomorphic coordinate
\(z=\Psi-\Psi(p)\). Then
\(d\Theta_2(p)=dz\) and \(d\Theta_3(p)=-dz\).
Stationary phase at the nondegenerate critical point \(p\) gives
\begin{equation}\label{eq:cgo_localized_asymptotic}
 T_h=\frac{c_p}{h}K_p(dz,dz)+o(h^{-1}),\qquad c_p\ne0.
\end{equation}
This is exactly the localization used in
\cite[Section~6.2]{carstea2024calder}. The weaker remainder \(o(h^{-1})\)
is sufficient. Multiplication by \(h\), followed by \eqref{eq:kappa_O1},
yields \(K_p(dz,dz)=0\).

In the isothermal coordinate \(z=x+iy\), reality, symmetry and trace-freeness
give
\[
 K^{xx}+K^{yy}=0,\qquad
 K(dz,dz)=K^{xx}-K^{yy}+2iK^{xy}.
\]
Hence \(K(p)=0\). The construction applies on a dense subset of the interior,
as explained in \cite[Section~6.2]{carstea2024calder}, so continuity gives
\(v^1\kappa=K=0\) everywhere. Unique continuation principle for
\(-\Delta_g+q\) implies that the nonzero set of the fixed nontrivial solution
\(v^1\) is dense. Thus \(\kappa=0\) on that set and, by continuity, on all
of \(\Sigma\). The inverse-metric identity at the start of the proof then
identifies \(\p_s^2g_s|_0\), as claimed.
\end{proof}

We do not need to recover \(\chi\) for the geometric results of this paper.
The new information is \(k^{(2)}\), in addition to
\(g\), \(k^{(1)}\), \(q\), and \(h^{(2)}\) in
\eqref{eq:known_coefficients}. The foliated rigidity proof uses only
\(g\), \(A\), and \(q\). The additional normal metric derivative is used
in the recovery of tangential ambient Ricci curvature and the Einstein theorem.

\section{Recovery of curvature terms}\label{sec:recovery_of_curvature_components}

We use the curvature convention
\[
 \Riem(X,Y)Z=\ol\nabla_X\ol\nabla_YZ-\ol\nabla_Y\ol\nabla_XZ
                         -\ol\nabla_{[X,Y]}Z.
\]
Let $\Sigma_j\subset \wt M_j$ for $j=1,2$ be embedded minimal surfaces in $(\wt M_j,\ol g_j)$, and $N_j$ their unit normals.  We define the ``tangential'' $1$-forms $\Theta_j\in \mathcal{T}^{\s 1}_0(\Sigma_j)$ and $2$-tensors $\Xi_j\in \mathcal{T}^{\s 2}_0(\Sigma_j)$ by
\[
 \Theta_j(X_j):= \Ric (\ol g_j)(N_j,X_j), \quad X_j\in T\Sigma_j,
\]
and
 \[
  \Xi_j(X_j,Y_j):=\Riem(\ol g_j)(X_j,N_j,N_j,Y_j\big), \text{ for } X_j,Y_j\in T\Sigma_j,
 \]
 for $j=1,2$.
Here $\Ric(\ol g_j)$ and $\Riem(\ol g_j)$ are the Ricci and Riemannian curvature tensors of  $(\wt M_j,\ol g_j)$. We  consider the normal component of the Ricci curvature $\Ric (\ol g_j)(N_j,N_j)$ as a function on $\Sigma_j$.

We also denote by $R(\ol g_j)$ the scalar curvature of $(\wt M_j,\ol g_j)$.

In this section we prove the following:
\begin{prop}\label{prop:slice_curvature}
 Let $(\Sigma_1,g_1)\subset \wt M_1$ and $(\Sigma_2,g_2)\subset \wt M_2$ be non-degenerate embedded minimal surfaces with mutual boundary $\p \Sigma\subset\Omega$ and $\Sigma_1\cap\Omega=\Sigma_2\cap\Omega$, and assume that their area data agree, that is, $\mathcal A_{\Sigma_1} (f) = \mathcal A_{\Sigma_2}(f)$ for all $f\in W$.
  Then the isometry $F:(\Sigma_1,g_1)\to (\Sigma_2,g_2)$ of Theorem \ref{thm:minimal_surface_general} satisfies
 \begin{align}
  \big(F^*\Ric(\ol g_2)\big)(X,Y)&=\Ric (\ol g_1)(X,Y), \label{eq:Ricci_to_tangential} \\
  F^*\big(\Ric (\ol g_2)(N_2,N_2)\big)&=\Ric (\ol g_1)(N_1,N_1), \label{eq:Ricci_to_normal} \\
  (F^*\Theta_2)(X)&=\Theta_1(X), \label{eq:mixed_Ricci} \\
  (F^*\Xi_2)(X,Y)&=\Xi_1(X,Y), \label{eq:Riemann_to_normal}
  \\
  F^*(R(\ol g_2))&=R(\ol g_1)\label{eq:scalar_curvature}
 \end{align}
 for $X,Y\in T\Sigma_1$.
\end{prop}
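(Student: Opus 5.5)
Our approach is to express every curvature quantity in the statement through the Fermi jet data along $\Sigma_j$, namely $g$, $A$ (equivalently $\p_sg_s|_0$), $\p_s^2g_s|_0$, and tangential derivatives of these. By \eqref{eq:known_coefficients} and Proposition~\ref{prop:second_normal_jet}, all of these quantities agree after pullback by $F$. Since the Fermi metric is $\ol g=ds^2+g_s$, no lapse or shift appears. Hence every component of $\Riem(\ol g)$ along $\{s=0\}$ is a universal expression in $g_0$, $\p_sg_s|_0$ and $\p_s^2g_s|_0$ together with their tangential covariant derivatives. The only ingredient used is that $F$ is an isometry of the induced metrics carrying the jets to one another; all identities are tensorial on $\Sigma$.

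We will use three classical identities.

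\emph{Gauss equation.} For $X,Y,Z,W\in T\Sigma$,
\[
\Riem(X,Y,Z,W)=R^\Sigma(X,Y,Z,W)-\bigl(A(X,W)A(Y,Z)-A(X,Z)A(Y,W)\bigr)
\]
(signs to be fixed with the paper's convention). Since $F$ is an isometry with $F^*A_2=A_1$ (Theorem~\ref{thm:minimal_surface_general}), the purely tangential Riemann tensor agrees.

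\emph{Codazzi equation.} $\Riem(X,Y,Z,N)=\pm\bigl((\nabla_XA)(Y,Z)-(\nabla_YA)(X,Z)\bigr)$. Tracing in $Y,Z$ and using $H=0$ gives $\Theta(X)=\pm\,\divergence_g A(X)$, which yields \eqref{eq:mixed_Ricci}.

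\emph{Riccati equation.} In Fermi coordinates the shape operator $S_s=\frac12 g_s^{-1}\p_sg_s$ satisfies $\p_sS+S^2=-\Riem(\cdot,N)N$. Written out,
\[
\Xi_{\a\b}=-\tfrac12\p_s^2g_{\a\b}|_0+\tfrac14\,\p_sg_{\a\mu}\,g^{\mu\nu}\,\p_sg_{\nu\b}\big|_0 ,
\]
up to sign convention. This depends only on $g$, $\p_sg_s|_0$ and $\p_s^2g_s|_0$, so \eqref{eq:Riemann_to_normal} follows from Proposition~\ref{prop:second_normal_jet}.

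The remaining statements follow by tracing. First, $\Ric(N,N)=\tr_g\Xi$, giving \eqref{eq:Ricci_to_normal}; alternatively, it follows directly from $q=-|A|^2-\Ric(N,N)$ and $F^*q_2=q_1$, which serves as a consistency check. Next, for tangential $X,Y$,
\[
\Ric(X,Y)=\Xi(X,Y)+\tr_g\Riem(X,\cdot,\cdot,Y)\big|_{T\Sigma},
\]
and the second term is controlled by Gauss, giving \eqref{eq:Ricci_to_tangential}. Finally, $R=\tr_g\Ric|_{T\Sigma}+\Ric(N,N)$, which is \eqref{eq:scalar_curvature}. Note that \eqref{eq:Ricci_to_tangential} is the only part that genuinely needs $k^{(2)}$; the other parts already follow from \eqref{eq:known_coefficients}.

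The main obstacle is the Riccati/second-variation step. Proposition~\ref{prop:second_normal_jet} is stated for $k^{(2)}=\p_s^2(g_s^{-1})|_0$, so we must convert it to $\p_s^2g_s|_0$ via the inverse-metric identity $gk^{(2)}g=2(\p_sg)g^{-1}(\p_sg)-\p_s^2g$ from its proof. We must also check that the Fermi coordinates of $\Sigma_2$, pulled back by $F$, are compatible with those of $\Sigma_1$, that is, that $F$ extends to the map $(x,s)\mapsto(F(x),s)$ in Fermi coordinates so the jets are compared correctly. Both points appear to be already encoded in the pullback construction of Section~\ref{subsec:known_coefficients}, with the normals matched in $\Omega$. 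The remaining care is in fixing signs consistently with the paper's convention $A(X,Y)=\ol g(\ol\nabla_XN,Y)$ and its $\Riem$ convention. The signs do not affect the conclusion, because the same universal formula is applied to both metrics.
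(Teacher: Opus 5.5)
Your proposal is correct and follows essentially the paper's own route. The paper uses the Fermi-coordinate formula $\Xi=A^2-\frac12\p_s^2g_s|_0=\frac12 gk^{(2)}g-3A^2$ together with Proposition~\ref{prop:second_normal_jet} for \eqref{eq:Riemann_to_normal}, the Gauss equation for \eqref{eq:Ricci_to_tangential}, the traced Codazzi identity $\Ric(\p_\a,N)=\nabla_\kappa T^\kappa_\a-\nabla_\a H$ for \eqref{eq:mixed_Ricci}, $q=-|A|^2-\Ric(N,N)$ for \eqref{eq:Ricci_to_normal}, and a trace for \eqref{eq:scalar_curvature}.

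Your closing remark is wrong and contradicts your own derivation. \eqref{eq:Riemann_to_normal} also genuinely needs $k^{(2)}$: for trace-free $A$ on a surface one has $A^2=\frac12|A|_g^2\,g$, so the trace-free part of $\Xi$ is $-\frac12$ times that of $\p_s^2g_s|_0$, which $g$, $A$ and $q$ do not determine. Only \eqref{eq:Ricci_to_normal}, \eqref{eq:mixed_Ricci} and \eqref{eq:scalar_curvature} follow from the earlier coefficients alone.
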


The proof is by careful computations in Fermi coordinates and using the quantities recovered by Theorem \ref{thm:minimal_surface_general} and in Section \ref{sec:recovery_of_metric_quantities}.

\subsection{Computations in Fermi coordinates}
In this subsection $\Sigma$ is an embedded hypersurface in a Riemannian manifold $(\wt M, \ol g )$ of dimension $n$.
As agreed in Section \ref{sec:recovery_of_metric_quantities}, Greek letters $\a,\b,\ldots $ denote the indices $1,\ldots, n-1$ of coordinates on $\Sigma$, and Latin letters $a,b,j,l,\ldots$ the indices $1,\ldots,n$ of coordinates on $\wt M$.  In Fermi coordinates $(x,x_n)$ relative to $\Sigma$ the metric reads
\[
 \og=dx_n^2+g_{\alpha\beta}(x,x_n)dx^\alpha dx^\beta.
\]
Thus
\[
 \og_{nn}=\og^{nn}=1 \text{ and } \og_{n\a}=\og^{n\a}=0,
\]
 for  $\a=1,\ldots, n-1$.
Also
\[
 \og_{\a\b}=g_{\a\b} \text{ and } \og^{\a\b}=g^{\a\b},
\]
and
\[
 \p_n\og_{n\a}=0  \text{ and } \p_n\og_{nn}=0.
\]

The scalar second fundamental form $A_{\alpha\beta}=\langle \ol\nabla_{\partial_{\alpha}} N, \partial_{\beta}\rangle$ and mean curvature $ H=g^{\alpha\beta}A(\partial_{\alpha},\partial_{\beta})$ are in Fermi-coordinates given by
\begin{align}\label{eq:2nd_fund}
\begin{split}
A(\partial_{\alpha}, \partial_{\beta}) &=\langle \nabla_{\partial_{\alpha}} \partial_n, \partial_{\beta}\rangle= \Gamma_{\alpha n}^l \ol g_{l\beta} = \frac{1}{2} \ol g^{lm}(\partial_{\alpha} \ol g_{nm} + \partial_n \ol g_{\alpha m} - \partial_m \ol g_{\alpha n}) \ol g_{l\beta} \\
 &= \frac{1}{2} \partial_n g_{\alpha \beta},
 \end{split}
\end{align}
and
\begin{equation}\label{eq:mean_curvature}
H = \frac{1}{2} g^{\alpha \beta} \partial_n g_{\alpha \beta}.
\end{equation}
Note also that since $N=\p_n$
\begin{align*}
(\mathcal{L}_N \ol g)_{\a\b} &= N^k \partial_k \ol g_{\a\b} +\partial_\a N^k \ol g_{k\b} + \partial_\b N^k \ol g_{\a k} = \partial_n g_{\a\b}=2A(\p_\a,\p_\b),
\end{align*}
since $\partial_\a N^k=\partial_\b N^k=0$.

We have for $a=1,\ldots,n$
\[
 \Gamma_{nn}^a=\frac 12 \ol g^{ab}(\p_n \ol g_{bn} + \p_n \ol g_{bn} - \p_b \ol g_{nn})=0,
\]
since $g_{nn}=1$ and $\og_{n\a}=0$.
We also have
\[
 \Gamma_{an}^a=\p_n \log \abs{\ol g}^{1/2}
\]
by the usual contraction formula for the Christoffel symbols.  Thus
\begin{equation}\label{eq:gamma_vert_contract}
 \Gamma_{an}^a=\p_n \log \abs{\ol g}^{1/2}=\p_n \log \abs{g}^{1/2}=\frac 12 \tr(g^{-1}\p_ng).
\end{equation}
For  $a,b=1,\ldots,n$ we  calculate that
\begin{equation}\label{eq:Gammaban}
 \Gamma_{an}^b=\frac 12 \ol g^{bc}(\p_a \ol g_{nc} + \p_n \ol g_{ac} - \p_c \ol g_{an})=\frac 12 \ol g^{bc}\p_n \ol g_{ac}.
\end{equation}
If $(a,b)=(\alpha,\beta)$, this is
\begin{equation}\label{eq:gamma_tang}
 \Gamma_{\alpha n}^\beta=\frac 12 \ol g^{\beta c}\p_n \ol g_{\alpha c}=\frac 12 g^{\beta\gamma}\p_n g_{\alpha\gamma},
\end{equation}
since $g_{nn}=1$ and $g_{n\alpha}=0$. For the same reason
\begin{equation}\label{eq:gamma_n}
 \Gamma_{\alpha n}^n=\Gamma_{n n}^a=0.
\end{equation}
Using the above identities
\[
\Gamma_{nb}^a\Gamma_{an}^b=\frac14 g^{\alpha\gamma}g^{\beta\delta}(\p_n g_{\beta\gamma})(\p_n g_{\alpha\delta})=|A|_g^2.
\]
 Finally we have
\begin{equation}\label{eq:Gamma_squared}
 \Gamma^n_{\alpha\beta}=\frac{1}{2} \ol g^{nm}(\partial_{\alpha} \ol g_{m\b} + \partial_\b \ol g_{m\a} - \partial_m \ol g_{\alpha \b})=-\frac 12 \p_n g_{\a\b}=-A(\p_\a,\p_\b).
\end{equation}

Let us then derive formulas for curvature tensors. By  \eqref{eq:gamma_vert_contract} and \eqref{eq:gamma_n} we obtain
\begin{multline*}
 \Ric_{\ol g}(N,N)=\Ric_{\ol g}(\p_n,\p_n)=\p_a\Gamma^a_{nn}-\p_n\Gamma^a_{an}+\Gamma^a_{nn}\Gamma_{ba}^b-\Gamma_{an}^b\Gamma_{nb}^a \\
 =-\frac 12 \p_n \tr(g^{-1}\p_ng)- |A|_g^2 \\
 = \frac 12 \tr((g^{-1}\p_ng) g^{-1} \p_ng)-\frac 12  \tr(g^{-1}\p_n^2g)-\abs{A}^2=-\frac 12  \tr(g^{-1}\p_n^2g)+\abs{A}^2,
\end{multline*}
where $A_{\alpha\beta}=\frac 12 \p_ng_{\alpha\beta}$
is the second fundamental form as before.

Recall from \eqref{eq:q_recovered} (see also \cite[Section 1.1]{carstea2023inverse}) that the potential $q$ of the first linearization (stability equation) $(-\Delta_g+q)v=0$ in Fermi coordinates is given by
\[
q=\frac 12 \p_n\tr(g^{-1}\p_ng).
\]
Thus
\begin{align}\label{eq:q_in_terms_of_curvature}
 q&=-\frac12\tr(g^{-1}(\p_ng)g^{-1}(\p_ng))
       +\frac12\tr(g^{-1}\p_n^2g)\notag\\
  &=\frac12\tr(g^{-1}\p_n^2g)-2|A|_g^2\notag\\
  &=-\Ric_{\ol g}(N,N)-|A|_g^2.
\end{align}
Thus the linearized equation \eqref{linear_eq} is the Jacobi equation
\[
\Delta_gv+\big(\abs{A}^2+\Ric_{\ol g}(N,N)\big)v=0
\]
of $\Sigma$, cf.~\cite{colding2011course}.

Next we calculate $\ol g(\Riem_{\ol g}(X,N)N,Y)$ and $\Ric_{\ol g}(X,N)$.
We set $X=\p_\alpha$ and $Y=\p_\beta$. By using $\Gamma^a_{nn}=0$, we have
\begin{multline}
 \ol g(\Riem_{\ol g}(\p_\alpha,N)N,\p_\beta)=R_{\alpha nn\beta}=\ol g_{\beta a}R_{\alpha n n}^{\phantom{\alpha n n}a}\\
 =\ol g_{\beta a}\left[\p_\alpha\Gamma^a_{n n}  -\p_n\Gamma^a_{\alpha n}+(\Gamma^b_{nn}\Gamma^a_{\alpha b}-\Gamma^b_{\alpha n}\Gamma^a_{n b})   \right]=-\ol g_{\beta a}\left[\p_n\Gamma^a_{\alpha n}  +\Gamma^b_{\alpha n}\Gamma^a_{n b}    \right].
 \end{multline}

 By using \eqref{eq:gamma_tang} and \eqref{eq:gamma_n}, this is
\begin{multline}\label{eq:curvature_N_components}
  \ol g(\Riem_{\ol g}(\p_\alpha,N)N,\p_\beta)
  =-\frac 12 \ol g_{\beta a}\p_n (\ol g^{ab}\p_n \ol g_{\alpha b})-\frac 14 \ol g_{\beta a} \ol g^{bc}\p_n \ol g_{\alpha c}\ol g^{ad}\p_n \ol g_{ bd}\\
  =-\frac 12  g_{\beta \kappa}\p_n (g^{\kappa \gamma }\p_n g_{\alpha \gamma})-\frac 14 g^{\kappa \gamma }\p_n g_{\alpha \gamma }\p_n g_{ \kappa \b } \\
  =\frac 12  g_{\beta \kappa}g^{\kappa\sigma}(\p_ng_{\sigma\eta})g^{\eta\gamma}\p_n g_{\alpha \gamma}-\frac 12 \p_n^2g_{\a\b}-\frac 14 g^{\kappa \gamma }\p_n g_{\alpha \gamma }\p_n g_{ \kappa \b } \\
  =\frac 14  (\p_ng_{\beta\eta})g^{\eta\gamma}\p_n g_{\alpha \gamma}-\frac 12 \p_n^2g_{\a\b} = (A^2)_{\a\b}-\frac 12 \p_n^2 g_{\a\b},
  \end{multline}
  where $A^2=Ag^{-1}A$. One can also compute that $\p_n^2 g_{\a\b}=\mathcal{L}_N^2\ol g_{\a\b}$.
We may write \eqref{eq:curvature_N_components} using the quantity $k^{(2)}$ in \eqref{eq:hkd}. Indeed, we have
\begin{multline}\label{eq:k2_formula}
k^{(2)}=\p_n^2(g^{-1})=\p_n(-g^{-1}(\p_n g)g^{-1})\\
=2g^{-1}(\p_ng)g^{-1}(\p_ng)g^{-1}-g^{-1}(\p_n^2g)g^{-1}
\end{multline}
and thus
\[
 gk^{(2)}g=2(\p_ng)g^{-1}(\p_ng)-\p_n^2g=8A^2-\p_n^2g.
\]
Consequently
\begin{equation}\label{eq:curvature_N_components_witk_k2}
 \ol g(\Riem_{\ol g}(\p_\alpha,N)N,\p_\beta)=\frac 12 (gk^{(2)}g)_{\a\b}- 3(A^2)_{\a\b}.
\end{equation}

We define a tensor $T\in \mathcal{T}^{\s 1}_1(\Sigma)$, also known as the shape operator,  by its components as
\[
T^\alpha_\beta:=\frac 12g^{\alpha \gamma}\p_n g_{\gamma \beta}.
\]
Next we calculate using \eqref{eq:mean_curvature} and \eqref{eq:Gammaban}--\eqref{eq:Gamma_squared}
\begin{multline*}
 \Ric_{\ol g}(\p_\alpha,N)=\p_j\Gamma^j_{\alpha n}-\p_\alpha\Gamma^j_{j n}  +(\Gamma^b_{\alpha n}\Gamma^j_{j b}-\Gamma^b_{n j}\Gamma^j_{\alpha b} )  \\
 =\frac 12 \p_j(\ol g^{ja}\p_n\ol g_{\alpha a}) -\frac 12 \p_\alpha(\ol g^{ja}\p_n\ol g_{j a})  +(\Gamma^\beta_{\alpha n}\Gamma^j_{j \beta}-\Gamma^\beta_{n \gamma }\Gamma^\gamma_{\alpha \beta} ) \\
 =\frac 12 \p_\kappa (g^{\kappa \beta }\p_ng_{\alpha \beta }) -\frac 12 \p_\alpha(g^{\beta \gamma}\p_n g_{\beta \gamma})  +(\Gamma^\beta_{\alpha n}\Gamma^\gamma_{\gamma \beta}-\Gamma^\beta_{n \gamma }\Gamma^\gamma_{\alpha \beta} )\\
 =- \nabla_\alpha H+ \p_\kappa T^\kappa_\alpha+ \Gamma_{\gamma\beta}^\gamma T^{\beta}_\alpha- \Gamma_{\alpha\beta}^\gamma T^{\beta}_\gamma \\
 = \nabla_\kappa T^\kappa_\alpha- \nabla_\alpha H,
\end{multline*}
Here $\nabla_\kappa$ and $\nabla_\alpha$
 are covariant derivatives on $(\Sigma,g)$. Especially both two terms on the right are $1$-forms on $\Sigma$ and transform tensorially in changes of coordinates on $\Sigma$.

\subsection{Proof of the curvature recovery proposition}
If $\Sigma$ is an embedded hypersurface in a Riemannian manifold $(\wt M, \ol g )$ of dimension $3$, then the Gauss equation for the Ricci curvature is
\begin{multline}\label{eq:Gauss-Codazzi_Ricci}
  \Ric_{\ol g}(X,Y)=\Ric_{g}(X,Y)+\ol g(\Riem_{\ol g}(X,N)N,Y) \\
  +\sum_{j=1,2}A(X,E_j)A(E_j,Y)-HA(X,Y).
 \end{multline}
 Here $(E_1,E_2)$ is an orthonormal frame of $\Sigma$ and $X$ and $Y$ are tangential to $\Sigma$.  The formula is independent of the choice of the orthonormal frame.

 Let $F:(\Sigma_1,g_1)\to (\Sigma_2,g_2)$ be the isometry given by Theorem \ref{thm:minimal_surface_general}, so that
 \[
  F^*g_2=g_1 \text{ and } F^*A_2=A_1.
 \]
 Let $(E_{2,1}, E_{2,2})$ be an orthonormal frame on $(\Sigma_2,g_2)$. Since $F$ is an isometry, $(E_{1,1},E_{1,2}):=(F^*E_{2,1},F^*E_{2,2} )$ is an orthonormal frame on $(\Sigma_1,g_1)$. (Here $F^*$ acting on vectors is defined as $(F^{-1})_*$ as usual.)

 Let $\p_\a$ and $\p_\b$ be coordinate vectors on $\Sigma_1$. By \eqref{eq:curvature_N_components_witk_k2} and Proposition~\ref{prop:second_normal_jet}, we have
 \begin{multline}\label{eq:Xi_recovered}
  [F^*\Xi_2](\p_\a,\p_\b)=\frac 12 F^*(g_2k^{(2)}_2g_2)(\p_\a,\p_\b)- 3(F^*A_2^2)(\p_\a,\p_\b)\\
  =\frac 12 (g_1k^{(2)}_1g_1)(\p_\a,\p_\b)- 3A_1^2(\p_\a,\p_\b)=\Xi_1(\p_\a,\p_\b).
 \end{multline}
 Here we also used that $F^*(A_2^2)=F^*(A_2g_2^{-1}A_2)=A_1g_1^{-1}A_1=A_1^2$.
 This proves \eqref{eq:Riemann_to_normal}.

Using  \eqref{eq:Gauss-Codazzi_Ricci} and \eqref{eq:Xi_recovered}, we compute
 \begin{multline*}
  [F^*\Ric(\ol g_2)](\p_\alpha,\p_\beta)=[F^*\Ric(g_2)](\p_\alpha,\p_\beta)+[F^*\Xi_2](\p_\a,\p_\b)\\
  +\sum_{j=1,2}F^*(A_2(\p_\alpha,E_{2,j})A_2(E_{2,j},\p_\beta))-(F^*H_2)(F^*A_2)(\p_\alpha,\p_\beta)\\
  =\Ric(g_1)(\p_\alpha,\p_\beta)+\Xi_1(\p_\a,\p_\b) \\
  +\sum_{j=1,2}(F^*A_2)(\p_\alpha,F^{-1}_*E_{2,j})(F^*A_2)(F^{-1}_*E_{2,j},\p_\beta)-H_1A_1(\p_\alpha,\p_\beta)\\
  =\Ric(g_1)(\p_\alpha,\p_\beta)+\ol g_1\big(\Riem(\ol g_1)(\p_\alpha,N_1)N_1,\p_\beta\big) \\
  +\sum_{j=1,2}A_1(\p_\alpha,E_{1,j})A_1(E_{1,j},\p_\beta)-H_1A_1(\p_\alpha,\p_\beta)=\Ric(\ol g_1)(\p_\alpha,\p_\beta).
 \end{multline*}
In addition to the already mentioned facts,  in the first equality we used that   $g_1=F^*g_2$ implies
\[
F^*\Ric(g_2)=\Ric(g_1),
\]
where $\Ric(g_j)$ is the Ricci tensor of $(\Sigma_j,g_j)$  for $j=1,2$. (Above in fact $H_1=H_2=0$.) This proves \eqref{eq:Ricci_to_tangential}.

 By \eqref{eq:q_recovered} and \eqref{eq:q_in_terms_of_curvature} we have
\begin{equation}\label{eq:q_pulled_back}
F^*q_2=q_1,\qquad q_j=-\abs{A_j}_{g_j}^2-\Ric(\ol g_j)(N_j,N_j).
\end{equation}
 The second equation \eqref{eq:Ricci_to_normal} then follows since in addition $F^*\abs{A_2}^2_{g_2}=\abs{A_1}^2_{g_1}$ by Theorem \ref{thm:minimal_surface_general}.

The equation \eqref{eq:mixed_Ricci} follows from the formula $\Ric_{\ol g}(\p_\a,N)=\nabla_\kappa T^\kappa_\a-\nabla_\a H$ derived above, since $F^*T_2=T_1$ and $F^*H_2=H_1$ $(=0)$ by Theorem \ref{thm:minimal_surface_general} and since $F$ is an isometry:
\begin{multline}\label{eq:Theta_calc}
 (F^*\Theta_2)(\p_\alpha)=\frac{\p F^\beta}{\p x^\alpha}\Theta_{2,\beta}|_F=\frac{\p F^\beta}{\p x^\alpha}\Ric(\ol g_2)(N_2,\p_\beta)|_F\\
 =\frac{\p F^\beta}{\p x^\alpha}\left( \nabla_{g_2,\kappa} (T_2)^\kappa_{\beta}- \nabla_{g_2,\beta} H_2\right)\Big|_F \\
 =\frac{\p F^\beta}{\p x^\alpha}\left( F^{-1*}(\nabla_{g_1,\kappa} (T_1)^\kappa)_{\beta}- F^{-1*}(\nabla_{g_1} H_1)_\beta\right)\Big|_F \\
 =\frac{\p F^\beta}{\p x^\alpha}\frac{(\p F^{-1})^\gamma}{\p x^\beta}\left( \nabla_{g_1,\kappa} (T_1)^\kappa_{\gamma}- \nabla_{g_1,\gamma} H_1\right)= \nabla_{g_1,\kappa} (T_1)^\kappa_{\alpha}- \nabla_{g_1,\alpha} H_1 \\
 =\Ric(\ol g_1)(N_1,\p_\alpha)=\Theta_1(\p_\alpha).
\end{multline}
Finally, \eqref{eq:scalar_curvature} follows from \eqref{eq:Ricci_to_tangential} and \eqref{eq:Ricci_to_normal}, since $F^*g_2=g_1$ and
\[
R(\ol g_j)=g_j^{\a\b}\,\Ric(\ol g_j)(\p_\a,\p_\b)+\Ric(\ol g_j)(N_j,N_j), \qquad j=1,2.
\]
This completes the proof of Proposition \ref{prop:slice_curvature}.
\section{Construction of the bulk isometry}\label{sec:bulk_isometry}
 For the foliated theorems we use only the induced metrics, second fundamental forms and Jacobi potentials recovered on the individual leaves. We first glue the normalized leafwise isometries smoothly. The additional curvature recovery of Proposition~\ref{prop:slice_curvature} is not needed in this step.

Let the assumptions of Theorem \ref{thm:main1} or of Theorem \ref{thm:main_local} be satisfied, so that for $j = 1,2$ the sets $\mathcal U_j\subset \wt M_j$ admit foliations $(\Sigma^t_j)_{t\in I}$ by minimal surfaces with $\p\Sigma_1^t=\p\Sigma_2^t\subset\Omega$ and $\Sigma^t_1\cap \Omega = \Sigma_2^t\cap\Omega$, such that \eqref{eq: area perturbations are the same} holds. We write
\[
g_j^t:=\iota_{\Sigma_j^t}^*\ol g_j
\]
for the metrics induced on the leaves, where $\iota_{\Sigma_j^t}:\Sigma_j^t\to \wt M_j$ denotes the inclusion. By Theorem \ref{thm:minimal_surface_general} (recall Remark \ref{rmk:leaves_nondegenerate}), for each $t\in I$ there exists a diffeomorphism $F_t: \Sigma_1^t \to \Sigma_2^t$ which is the identity on $\partial \Sigma_1^t = \partial \Sigma_2^t$ and satisfies
    \begin{eqnarray}\label{eq: Ft is isometry}
    g_1^t = F_t^*g_2^t.
    \end{eqnarray}
    Note that this means that for each $t\in I$ and $u\in C^\infty(\Sigma_1^t)$,
\begin{eqnarray}\label{eq: harmonic means harmonic}
\Delta_{g_1^t} u= 0 \iff \Delta_{F_t^*g_2^t} u = 0.
\end{eqnarray}
We remark that $F_t$ is uniquely determined by these properties: if $F_t'$ is another such map, then $F_t'^{-1}\circ F_t$ is an isometry of the connected surface $(\Sigma_1^t,g_1^t)$ which is the identity on $\p\Sigma_1^t$. Its differential at a boundary point is then the identity (it fixes the tangent space of the boundary and the inward unit normal vector of the boundary in $\Sigma_1^t$), and hence $F_t'^{-1}\circ F_t=\Id$.
  We define
		\[
		 \ol F: \mathcal U_1=\bigcup\limits_{t\in I} \Sigma_1^t\to \mathcal U_2=\bigcup\limits_{t\in I} \Sigma_2^t
		\]
by
\begin{eqnarray}\label{eq: def of ol F}
 \ol F(p):=F_t(p),\qquad p\in\Sigma_1^t.
\end{eqnarray}
Since the leaves are pairwise disjoint and each $F_t$ is a bijection, $\ol F$ is a well-defined bijection which maps $\Sigma_1^t$ onto $\Sigma_2^t$ for every $t\in I$. We now show that $\ol F$ is the identity near the boundaries of the leaves, and that it is a diffeomorphism.

\begin{lem}\label{lem: isometry is identity}
There is an open set $\mathcal V\subset \wt M_1$ with $\bigcup_{t\in I}\p\Sigma_1^t\subset \mathcal V$ and $\mathcal V\cap \mathcal U_1\subset \Omega$ such that
\[
\ol F(p)=p\quad \text{for all } p\in \mathcal V\cap \mathcal U_1.
\]
Moreover, for every $t\in I$, the map $F_t$ is the identity on each connected component of $\Sigma_1^t\cap\Omega$ which meets $\p\Sigma_1^t$.
\end{lem}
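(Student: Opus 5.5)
The plan is to prove the second assertion first, leafwise, and then obtain the uniform neighbourhood $\mathcal V$ from the foliation structure near $B=\bigcup_t\p\Sigma_1^t$.

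Fix $t\in I$ and let $C$ be a connected component of $\Sigma_1^t\cap\Omega$ that meets $\p\Sigma_1^t$. Since $\Sigma_1^t\cap\Omega=\Sigma_2^t\cap\Omega$ as sets and $\ol g_1=\ol g_2$ on $\Omega$, the induced metrics $g_1^t$ and $g_2^t$ coincide on $C$. Both $F_t|_C$ and $\Id_C$ are therefore local isometries of $(C,g_1^t|_C)$ into $\Sigma_2^t$ that agree on $\p\Sigma_1^t\cap C$. At a boundary point $p$, the differential $DF_t(p)$ fixes $T_p\p\Sigma$. It also maps the inward unit normal of $\p\Sigma$ in $(\Sigma_1^t,g_1^t)$ to the inward unit normal in $(\Sigma_2^t,g_2^t)$. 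These normals coincide because the surfaces and metrics agree in $\Omega$. Hence $DF_t(p)=I$.

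Next I will use the standard rigidity of isometries. Consider the set $S\subset C$ of points $p$ with $F_t(p)=p$ and $DF_t(p)=I$. This set is closed. It is also open: near such a point, in normal coordinates of $g_1^t$ on $C$, the map $F_t$ sends geodesics from $p$ to $g_2^t$-geodesics with the same initial data, and $g_1^t=g_2^t$ on $C$. So the images stay in $C$ for short time, and $F_t$ is the identity on a small ball. The set $S$ is nonempty, since it contains $\p\Sigma_1^t\cap C$. Interior points and boundary points need slightly different handling: at boundary points I use half-balls, or the normal-geodesic argument from Section~\ref{subsec:known_coefficients} on a collar. By connectedness $S=C$, which proves the second assertion.

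For the first assertion I will construct $\mathcal V$. The foliation is smooth up to $B$, and $B\subset\operatorname{Int}\Omega$. Locally near $B$ the foliation is a product of an interval and a half-disk, with leaf boundaries given by $x^1=0$. Take a small tubular neighbourhood $\mathcal V$ of $B$ in $\operatorname{Int}\Omega$, for example $\mathcal V=\{p:\operatorname{dist}(p,B)<\epsilon(p)\}$ with a positive continuous function $\epsilon$ chosen via these local product charts. The function $\epsilon$ should be small enough that for each $t$ the set $\mathcal V\cap\Sigma_1^t$ is a collar of $\p\Sigma_1^t$ contained in $\Omega$ in which every point is joined to $\p\Sigma_1^t$ inside $\Sigma_1^t\cap\Omega$. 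In the half-disk chart, for instance, the straight segment to $\{x^1=0\}$ stays in the same leaf. Every point of $\mathcal V\cap\Sigma_1^t$ then lies in a component of $\Sigma_1^t\cap\Omega$ meeting $\p\Sigma_1^t$, so the second assertion gives $\ol F(p)=F_t(p)=p$ on $\mathcal V\cap\mathcal U_1$.

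I expect the main obstacle to be this uniformity in $t$. The interval $I$ is open and $B$ need not be compact, so $\epsilon$ must be allowed to vary and the local charts must be patched. I will handle this by choosing $\epsilon$ locally from the product charts and taking a union of the resulting chart neighbourhoods; only containment of $B$ and openness are required, not uniform width.
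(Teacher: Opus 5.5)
Your argument is correct, but it reverses the order of the paper's proof.

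\textbf{The paper's route.} The paper obtains the first assertion directly. On patches of $B$ it considers the leafwise normal exponential map $E(q,s)=\exp_q^{g_1^{t(q)}}(s\nu_{t(q)}(q))$. It uses smoothness of the foliation up to the leaf boundaries and the inverse function theorem to show that the image of $E$ is a relative neighbourhood of $B$ in $\mathcal U_1$, and it applies geodesic uniqueness along each segment. The statement about components of $\Sigma_1^t\cap\Omega$ is then deduced by an open--closed argument in the interior of the component, seeded by that collar.

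\textbf{Your route.} You prove the componentwise statement first, seeded only by the single-leaf collar of the compact curve $\p\Sigma_1^t$. You then get $\mathcal V$ purely topologically from the half-disk product charts: a point of such a chart lying in $\Omega$ is joined to $\p\Sigma_1^t$ by an $x^1$-segment inside its own leaf and inside $\Omega$. Hence it lies in a component meeting the boundary. This sidesteps the joint smoothness of leafwise geodesics and the inverse function theorem. It loses nothing, since later only the fact that $\ol F=\Id$ on the relatively open set $\mathcal V\cap\mathcal U_1\supset B$ is used.

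\textbf{Two details to tighten.}
\begin{enumerate}
\item Take $\mathcal V=\bigcup_{b\in B}\mathcal B(b,\epsilon_b)$, where $\mathcal B(b,\epsilon_b)$ is the ambient ball and $\epsilon_b$ is so small that $\mathcal B(b,\epsilon_b)\cap\mathcal U_1$ lies in a product chart at $b$ contained in $\Omega$. A condition $\operatorname{dist}(p,B)<\epsilon(p)$ with $\epsilon$ continuous on $\wt M_1$ could pick up points near $\ol B\setminus B$ that no product chart covers.
\item At boundary points, use the normal-geodesic collar rather than half-balls. Geodesics from a single boundary point need not fill a neighbourhood of it when the boundary is concave. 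Alternatively, run the open--closed argument in the connected interior of $C$ and conclude by continuity, as the paper does.
\end{enumerate}
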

\begin{proof}
Fix \(t\in I\). The two leaves and their induced metrics agree in \(\Omega\).
At \(q\in\p\Sigma_1^t\), the boundary-fixing isometry \(F_t\) fixes the tangent
to the boundary and maps the inward unit normal vector \(\nu_t(q)\) of \(\p\Sigma_1^t\) in \(\Sigma_1^t\) to the inward unit normal vector of \(\p\Sigma_2^t\) in \(\Sigma_2^t\).
These normal vectors coincide in the common exterior, so \(dF_t(q)=\Id\).
For a sufficiently short inward normal geodesic
\[
 \gamma_q(s)=\exp_q^{g_1^t}(s\nu_t(q)),\qquad s\geq0,
\]
the entire segment lies in \(\Omega\). Its image under \(F_t\) is a geodesic
with the same initial point and velocity. The metrics coincide on this segment,
so uniqueness of geodesics gives \(F_t(\gamma_q(s))=\gamma_q(s)\).

This identity holds on a jointly open collar, not just separately on individual
leaves. To verify this, let
\(B=\bigcup_{t\in I}\p\Sigma_1^t\), which is the smooth boundary of the foliated
region. On a small patch of \(B\), consider
\[
 E(q,s)=\exp_q^{g_1^{t(q)}}(s\nu_{t(q)}(q)).
\]
The smooth foliation up to the leaf boundaries implies that \(E\) is smooth in
\((q,s)\). At \(s=0\), its differential maps tangent vectors to \(B\) identically
and the \(s\)-direction to \(\nu_{t(q)}(q)\), which is transverse to \(B\).
The inverse function theorem therefore gives a relative neighbourhood of this
boundary patch in \(\mathcal U_1\). Shrinking it makes it a subset of \(\Omega\),
and the preceding geodesic argument gives \(\ol F=\Id\) there. Taking the union
over boundary patches gives a relatively open set \(\mathcal C\subset\mathcal U_1\)
containing \(B\), with \(\mathcal C\subset\Omega\) and \(\ol F=\Id\) on
\(\mathcal C\). By the subspace topology, choose an ambient open set
\(\mathcal V\) with \(\mathcal V\cap\mathcal U_1=\mathcal C\).

Finally, let \(D\) be a connected component of \(\Sigma_1^t\cap\Omega\)
meeting \(\p\Sigma_1^t\). The maps \(F_t|_D\) and the inclusion into
\(\Sigma_2^t\) are local isometries and agree in a nonempty collar patch.
In the interior of \(D\), the set where they agree in value and differential is
closed and open, again by the exponential-map identity. The interior of this
connected surface with boundary is connected, so they agree there and, by
continuity, on all of \(D\).
\end{proof}

\begin{lem}\label{lem: F is diffeo}
The map
$\ol F: \mathcal U_1\to \mathcal U_2$
defined by \eqref{eq: def of ol F} is a diffeomorphism.
\end{lem}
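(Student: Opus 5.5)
The plan is to show that $\ol F$ is smooth in a neighbourhood of each point. Applying the same argument to $\ol F^{-1}$ then gives a smooth inverse. Indeed, $\ol F^{-1}$ is assembled leafwise from the maps $F_t^{-1}:\Sigma_2^t\to\Sigma_1^t$, and these are the normalized boundary-fixing isometries for the problem with the roles of $1$ and $2$ interchanged. Since each leaf is mapped bijectively, smoothness of $\ol F$ and $\ol F^{-1}$ is exactly the diffeomorphism property. Near $B=\bigcup_t\p\Sigma_1^t$ there is nothing to prove, because $\ol F=\Id$ on $\mathcal V\cap\mathcal U_1$ by Lemma~\ref{lem: isometry is identity}. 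It therefore suffices to prove smoothness near an arbitrary interior point $p_0\in\Sigma_1^{t_0}$.

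The key tool is rigidity of isometries of connected Riemannian surfaces. An isometry is determined by its value and differential at one point. Explicitly, near a point $p$ we have
\[
 F_t\bigl(\exp^{g_1^t}_{p}(v)\bigr)=\exp^{g_2^t}_{F_t(p)}\bigl(dF_t(p)v\bigr)
\]
for all $v$ in a small ball on which both exponential maps are defined in the interiors of the leaves. The first step is to use local product charts of the two foliations. These are diffeomorphisms from $J\times D$ onto open subsets of $\mathcal U_j$, with $J\subset I$ an interval and $D$ a disk, under which the leaves are the slices $\{t\}\times D$. In such charts the families $g_j^t$ become smooth families of metrics on $D$. Hence the maps $(t,p,v)\mapsto\exp^{g_j^t}_p(v)$ are jointly smooth, and so are their inverses on normal balls of uniform radius.

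The second step is the chain argument. Choose a point $q\in\p\Sigma_1^{t_0}$ and a path in the interior of $\Sigma_1^{t_0}$ from a point $p_1$ of the collar $\mathcal C$ to $p_0$. Cover the path by finitely many balls. Each ball should be a normal ball for $g_1^t$, it should stay a normal ball for all $t$ near $t_0$, and it should lie in a single product chart. Along the chain pick centres $p_1(t),\dots,p_m(t)=p_0(t)$ that depend smoothly on $t$ in the chart coordinates, with each centre lying in the previous ball. At $p_1(t)\in\mathcal C$ the 1-jet $(F_t(p_1(t)),dF_t(p_1(t)))$ is $(p_1(t),\Id)$, which is smooth in $t$. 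By induction, if the 1-jet of $F_t$ at $p_i(t)$ is smooth in $t$, then the displayed formula shows that $(t,p)\mapsto F_t(p)$ is smooth on the $i$-th ball. In particular the 1-jet at $p_{i+1}(t)$ is smooth in $t$. On the last ball, $\ol F$ is a composition of smooth maps of $(t,p)$, and hence is smooth in the ambient sense, because the leaf parameter is a smooth submersion.

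The main obstacle is keeping everything uniform as $t$ varies. The image points $F_t(p_i(t))$ must stay inside a single product chart of the second foliation, the exponential maps must never run into the leaf boundaries, and the normal radii must be uniform in $t$. I expect to handle this by working on a compact piece of the interior of the leaf and taking the radii from compactness. The image leaves are controlled by continuity of the 1-jets, proved inductively together with smoothness, and by the fact that $\exp^{g_2^t}$ of a short vector stays in a fixed chart. Boundary points cause no difficulty, since there $\ol F=\Id$ by Lemma~\ref{lem: isometry is identity}.
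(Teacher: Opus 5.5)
Your proposal is correct and follows essentially the same route as the paper. In both, smoothness of the inverse comes by symmetry, smoothness starts from the collar where $\ol F=\Id$, and the smooth dependence of the 1-jets of $F_t$ is propagated along a compact path through finitely many uniform normal balls via $F_t(\exp^{g_1^t}_{p}v)=\exp^{g_2^t}_{F_t(p)}(dF_t(p)v)$, with the uniformity issues you flag (target charts, staying off the leaf boundaries, shrinking the $t$-interval) handled just as you describe.
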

\begin{proof}
The same construction applies to \(F_t^{-1}\), so it is enough to prove that
\(\ol F\) is smooth. Fix \(t_0\in I\) and a point \(p\in\Sigma_1^{t_0}\).
Near the boundary of that leaf, smoothness follows from the jointly open collar
in Lemma~\ref{lem: isometry is identity}. Choose a path in the interior of the
connected leaf from this collar to \(p\), unless \(p\) is already in the collar.

We propagate smoothness along this path. Suppose it is known near a smooth
section \(p(t)\in\Sigma_1^t\) through a point already reached. Choose source
and target foliation charts independently, around \(p(t_0)\) and
\(F_{t_0}(p(t_0))\). Then \(c(t)=F_t(p(t))\) and
\(L_t=dF_t|_{T_{p(t)}\Sigma_1^t}\) depend smoothly on \(t\), by the already
known smoothness. Choose smoothly varying orthonormal frames at \(p(t)\) and
\(c(t)\). In these frames \(L_t\) is an orthogonal matrix. For a sufficiently
small fixed \(r>0\) and \(t\) in a sufficiently small interval about \(t_0\),
the exponential maps at these two sections are defined on tangent balls of
radius \(r\), stay in the respective interior charts, and are diffeomorphisms
onto their images. These properties follow by shrinking the balls at \(t_0\)
and using smooth dependence of the geodesic equations.

Isometries map geodesics to geodesics, so
\begin{equation}\label{eq: F via exp}
 F_t\bigl(\exp_{p(t)}^{g_1^t}v\bigr)
       =\exp_{c(t)}^{g_2^t}(L_tv),\qquad |v|_{g_1^t}<r.
\end{equation}
The right-hand side is jointly smooth in \((t,v)\), and the map
\((t,v)\mapsto\exp_{p(t)}^{g_1^t}v\) is a foliation coordinate chart after
restriction. Thus \eqref{eq: F via exp} proves smoothness on the next such
neighbourhood. There is no assumption here that the sets \(F_t(U_t)\) already
vary smoothly.

The chosen path is compact. Choose finitely many sufficiently short geodesic
neighbourhoods along it, with the centre of each successive neighbourhood in
its predecessor. The corresponding target charts are chosen along its image
under the fixed isometry \(F_{t_0}\). Applying the preceding step finitely many
times, and shrinking the interval in \(t\) when necessary, proves smoothness
near \(p\). This proves smoothness everywhere, including the boundary collar.
The same proof for \(F_t^{-1}\) gives a smooth inverse. The union map is already
bijective because the leaves are disjoint and each \(F_t\) is bijective, so
\(\ol F\) is a diffeomorphism.
\end{proof}

Thanks to Lemma \ref{lem: F is diffeo}, $\overline F$ is a diffeomorphism, so we can speak of pull-backs and push-forwards by $\ol F$. Since $\ol F$ maps each leaf $\Sigma_1^t$ onto $\Sigma_2^t$, its differential maps tangent vectors of $\Sigma_1^t$ to tangent vectors of $\Sigma^t_2$.
We also denote
\begin{equation}\label{eq:both_metrics_on_side1}
 \widetilde g:=\ol F^*\og_2 \text{ and } \ol g:=\ol g_1,
\end{equation}
which are both Riemannian metrics on $\mathcal U_1$. By construction, $\ol F:(\mathcal U_1,\wt g)\to (\mathcal U_2,\ol g_2)$ is an isometry, and by \eqref{eq: Ft is isometry} the metrics $\wt g$ and $\ol g$ induce the same metric $g_1^t$ on each leaf $\Sigma_1^t$. Moreover, since $\ol F(\Sigma_1^t)=\Sigma_2^t$ is a $\ol g_2$-minimal surface, each $\Sigma_1^t$ is a $\wt g$-minimal surface. The remaining task is to show that $\wt g=\ol g$.

	\section{Lapse and shift equations and proofs of the foliated theorems}\label{sec:gauging}
The coordinate formulas and geometric identities in Sections~5.1--5.2 and
Appendix~\ref{sect:appendix} hold in ambient dimension \(n+1\), with
\(n\)-dimensional minimal hypersurfaces. We use this notation there and set
\(n=2\) when applying the identities to the foliated recovery theorems.
The two ambient metrics constructed in Section~\ref{sec:bulk_isometry} already
induce the same metric on each leaf. It remains to identify the lapse and shift.
Their equations have coefficients determined by the induced metric, the second
fundamental form and the Jacobi potential. Thus Theorems~\ref{thm:main1}
and~\ref{thm:main_local} do not use Proposition~\ref{prop:second_normal_jet}
or the recovery of the full ambient Ricci tensor.

%We retain the proof in the $H$-harmonic coordinates of
%\cite{anderson2008unique,anderson2010erratum}. The special case here requires
%only local coordinates harmonic on the leaves of an already given minimal
%foliation.

\subsection{Foliation harmonic coordinates}\label{subsect:harmonic coordinates}

For the coordinate formulas, let \(\bar g\) and \(\tilde g\) be two metrics
on an \((n+1)\)-dimensional region with the same foliation by minimal
hypersurfaces and the same induced metric on each leaf. In the application,
\(\bar g=\bar g_1\) and \(\tilde g=\bar F^*\bar g_2\) are the metrics from
Section~\ref{sec:bulk_isometry}, and \(n=2\).

We will work in (local) foliation coordinate systems $(t,x^1,\ldots,x^n)$ which are given by harmonic coordinates on each slice $\Sigma^t$. In such coordinates, the symmetric tensors $\tilde g$ and $\bar g$ are given by
\begin{eqnarray}\label{eq: overline g in coordinates}
\overline g = (\overline u^{\s 2} +\overline g_{\alpha\beta} \overline \sigma^\alpha\overline\sigma^\beta ) dt^2 + 2\,\overline g_{\alpha\beta} \overline\sigma^\beta dx^\alpha dt + \overline g_{\alpha\beta} dx^\alpha dx^\beta
\end{eqnarray}
and
\begin{eqnarray}\label{eq: tilde g in coordinates}
    \widetilde g = (\widetilde u^{\s 2} + \widetilde g_{\alpha\beta}\widetilde \sigma^\alpha\widetilde\sigma^\beta) dt^2 + 2\,\widetilde g_{\alpha\beta}\widetilde\sigma^\beta  dx^\alpha dt + \widetilde g_{\alpha\beta} dx^\alpha dx^\beta,
\end{eqnarray}
where the Greek indices $\alpha, \beta$ run over $1,\ldots,n$. Here $\ol u,\wt u>0$ are the lapse functions and $\ol\sigma=\ol\sigma^\a\p_\a$, $\wt\sigma=\wt\sigma^\a\p_\a$ the shift vector fields of the two metrics with respect to the foliation, see Appendix \ref{sect:appendix}. The induced leaf metrics agree, so
\begin{eqnarray}\label{eq: metrics equal on slice}
\tilde g_{\alpha\beta} = \bar g_{\alpha\beta}.
\end{eqnarray}
So it only remains to show that $\bar u=\tilde u$ and $\tilde\sigma = \bar\sigma$.

	\subsection{Equations for shift and lapse functions in H-harmonic coordinates}
In this section we derive a system of equations for the lapse and shift
functions in the \(H\)-harmonic coordinates introduced in
\cite{anderson2008unique,anderson2010erratum}. Under the assumption that the
region is foliated by minimal surfaces, we also construct such coordinates.  As defined there, a foliation coordinate system $(t,x)$ is $H$-harmonic if $\{x(t,\cdot)^\a\}$ are harmonic coordinates for $\Delta_{\iota_{\Sigma^t}^*\bar g}$ on $\Sigma^t$, and each $\Sigma^t$ has fixed prescribed mean curvature $H$. We consider the case $H=0$, which means that $H$-harmonic coordinates are given by a (local) foliation by minimal hypersurfaces $(\Sigma^t,g_t)$ together with harmonic coordinates $x_t^\a$ on each of them.
The harmonic-coordinate condition is equivalent to $g_t^{\mu\nu}\Gamma^{\alpha}_{\mu\nu}(g_t)=0$ for every tangential index $\alpha$ and every $t$.
\begin{lem}\label{lem: harmonic H-coordinates}
    Assume that $(M,\ol g)$ is an $(n+1)$-dimensional Riemannian manifold. Let $p\in M$ and assume that $p$ is contained in a neighbourhood foliated by minimal hypersurfaces. Then there exist $H$-harmonic coordinates on a neighbourhood of $p$, where each slice has $H=0$. The coordinate neighbourhood can be chosen so that its intersection with each slice is connected.
\end{lem}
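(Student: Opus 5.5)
We construct the coordinates leafwise and then check that they vary smoothly in the leaf parameter. Since \(p\) lies in a neighbourhood foliated by minimal hypersurfaces, we first choose a foliation chart \(\phi:(t,y)\mapsto \phi(t,y)\) around \(p\), with \(t\in(t_0-\epsilon,t_0+\epsilon)\) and \(y\in B\subset\R^n\). Here \(\phi(t,\cdot)\) parametrizes a piece of \(\Sigma^t\), and \(p=\phi(t_0,0)\). In these coordinates, the induced metrics \(g_t=\phi(t,\cdot)^*\iota_{\Sigma^t}^*\ol g\) form a smooth family of Riemannian metrics on the fixed ball \(B\), depending smoothly on \(t\). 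So the problem reduces to a parameter-dependent version of the classical existence of harmonic coordinates.

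We fix a small ball \(D=B_r(0)\subset B\). For each \(t\) we solve the Dirichlet problems
\[
 \Delta_{g_t}x_t^\a=0\ \text{in } D,\qquad x_t^\a=y^\a\ \text{on }\p D,\qquad \a=1,\ldots,n.
\]
The Laplacian \(\Delta_{g_t}\) with zero Dirichlet data is invertible, since it is negative definite. Writing \(x_t^\a=y^\a+w_t^\a\), we get \(\Delta_{g_t}w_t^\a=-\Delta_{g_t}y^\a\) with \(w_t^\a|_{\p D}=0\). Schauder theory, together with the smooth dependence of the inverse of a smooth family of invertible elliptic operators, shows that \((t,y)\mapsto x_t(y)\) is jointly smooth. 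Differentiating the equation in \(t\) gives equations of the same type for the \(t\)-derivatives, so induction yields all mixed derivatives.

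The main step is to ensure that \(y\mapsto x_t(y)\) is a diffeomorphism near \(0\), uniformly in \(t\). We do this by rescaling. After a \(t\)-dependent linear change of \(y\) making \(g_t(0)=\delta\), which we can choose smoothly by Gram--Schmidt, we work on \(B_r\) in the rescaled variable \(y=rz\). The rescaled metrics \(g_t(rz)\) converge in \(C^{2,\gamma}\) to \(\delta\) as \(r\to0\), uniformly for \(t\) near \(t_0\). The right-hand side \(\Delta_{g_t}y^\a=-g_t^{\mu\nu}\Gamma^\a_{\mu\nu}\) is \(O(1)\) in the original variables. After rescaling it contributes \(w=O(r^2)\) in \(C^{2,\gamma}\) of the unit ball, which is \(O(r)\) in the gradient. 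Hence \(dx_t=I+O(r)\) uniformly, and for \(r\) small the map \(\Psi(t,y)=(t,x_t(y))\) has invertible differential at \((t_0,0)\). The inverse function theorem then makes \(\Psi\circ\phi^{-1}\) a coordinate chart \((t,x)\) near \(p\). By construction the \(x^\a\) restrict to \(g_t\)-harmonic coordinates on each slice, which is equivalent to \(g_t^{\mu\nu}\Gamma^\a_{\mu\nu}(g_t)=0\). Each slice is minimal, so \(H=0\), and the coordinates are \(H\)-harmonic.

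Finally, we get connectedness of the slice intersections by shrinking. Since \(\Psi\) is a diffeomorphism near \((t_0,0)\) with \(\Psi(t,\cdot)=x_t\), we take the coordinate neighbourhood to be \(\phi(\Psi^{-1}((t_0-\epsilon',t_0+\epsilon')\times B_{\rho}(x_{t_0}(0))))\) for small \(\epsilon',\rho\). Its intersection with \(\Sigma^t\) is \(\phi(t,x_t^{-1}(B_\rho))\), the diffeomorphic image of a ball, and hence connected. We should also make sure this intersection is a single plaque: the chart \(\phi\) meets each leaf in exactly one plaque when it is taken small in a foliation chart, which is part of the local product structure assumed in the definition of the foliation.
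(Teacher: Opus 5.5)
Your proof is correct, but it handles the key nondegeneracy step differently from the paper.

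\textbf{The paper's route.} The paper starts from an existing $g_0$-harmonic chart $(u_0^1,\dots,u_0^n)$ on a ball $B_\delta(p)$ in the central leaf, which is the classical existence of harmonic coordinates. It transports this ball to nearby leaves by the foliation maps $\varphi_t$ and solves the Dirichlet problems there with the fixed boundary values $u_0^\alpha|_{\partial B_\delta(p)}$. By uniqueness, the solution at $t=0$ is $u_0$ itself. Hence linear independence of the gradients at $\varphi_t(p)$ for small $t$ follows from joint smoothness and continuity alone, with no estimates. The block-triangular Jacobian of $(t,u_t)$ then gives the chart.

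\textbf{Your route.} You impose the foliation-chart coordinates $y^\alpha$ as boundary data on a small ball. You then prove nondegeneracy by the scaling and Schauder estimate $x_t=y+O(r^2)$, so that $dx_t=I+O(r)$. In effect you reprove the existence of harmonic coordinates, uniformly in $t$. This makes your argument self-contained, at the cost of an elliptic estimate that the paper avoids by citing the classical result on one leaf.

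\textbf{What you can drop.} The uniformity in $t$ and the $t$-dependent Gram--Schmidt normalisation are more than you need. Since $d\Psi(t_0,0)$ is block triangular, invertibility of $dx_{t_0}(0)$ alone suffices for the inverse function theorem. Even at $t_0$ no normalisation is needed, because linear functions are already harmonic for the frozen constant-coefficient metric $g_{t_0}(0)$. Dropping the $t$-dependent linear change also spares you from reconciling it with the fixed Dirichlet domain $D$ that your joint-smoothness argument uses.

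\textbf{Connectedness.} Your step taking $\Psi^{-1}$ of a product of an interval and a ball is the same as the paper's final restriction. So is your observation that each leaf meets the chart in a single plaque, which holds here because leaves are level sets of the global leaf parameter $t$.
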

\begin{proof}
By assumption, we have coordinates $(t,x)$, for $t$ small and $x\in B(0,\delta)\subset \R^n$ for some $\delta>0$, where each slice $\{t=\text{const}\}$ is a minimal hypersurface and thus has mean curvature $H=0$.

We now need to construct harmonic coordinates on each leaf of the foliation around the point $p\in M$. To this end, assume that $p\in \Sigma^0$ and let $(u_0^1, \dots, u_0^n)$ be harmonic functions with respect to $\Delta_{ \iota_{\Sigma^0}^*\bar g}$ defined in $B_\delta(p) \subset \Sigma^0$ with linearly independent gradient at $p$.

By definition of foliation, for each $t\in \mathbb R$ sufficiently close to $0$, there is a diffeomorphism $\varphi_t : B_\delta(p) \to \varphi_t(B_\delta(p)) \subset \Sigma^t$ with $\varphi_0(\cdot) = {\rm Id}$ and $$\mathbb R\times B_\delta(p) \ni (t,q)\mapsto \varphi_t(q) \in M$$ is smooth.

Now consider for each $\alpha = 1,\dots, n$ and $t\in \mathbb R$ small, the BVP
\begin{eqnarray}\label{eq: laplace eq for coord funct} \Delta_{\varphi_t^* \iota_{\Sigma^t}^*\bar g} v_t^\alpha = 0,\ {\rm in}\ B_\delta(p),\ v_t^\alpha\mid_{\partial B_\delta(p)} = u_0^\alpha \mid_{\partial B_\delta(p)}.
\end{eqnarray}

The solution $v_t^\alpha(q)$ to this BVP is jointly smooth in $(t,q)$ thanks to smooth dependence of solutions on the parameters. Therefore, if we define
$$u_t^\alpha(q) := v_t^\alpha(\varphi_t^{-1}(q)) \in C^\infty(\varphi_t(B_\delta(p)))$$ for $\alpha = 1,\dots, n$, then $(u^1_t,\dots, u^n_t)$ is a vector of functions jointly smooth in $(t,q)$, and for each fixed small $t\in \mathbb R$ these functions have linearly independent gradients at $\varphi_t(p)\in \Sigma^t$. Therefore, for each $t\in\mathbb R$ sufficiently small, $(u^1_t,\dots, u^n_t)$ is a coordinate system for $\Sigma^t$ in a small neighbourhood of $\varphi_t(p)$. Furthermore, for each small $t\in \mathbb R$, $u^\alpha_t(\cdot)$ is harmonic with respect to $\iota_{\Sigma^t}^*\bar g$ thanks to \eqref{eq: laplace eq for coord funct}. Choose a smaller ball compactly contained in the domain where $(u_0^1,\ldots,u_0^n)$ is a coordinate chart, and restrict the already constructed solutions to that ball. Smooth dependence and openness of the embedding condition on its compact closure show that, after reducing the time interval, $(u_t^1,\ldots,u_t^n)$ is a coordinate system there for each $t$. Relabel this smaller ball and time interval for the notation below.

Let us then define
\[ U:
\bigcup_{t\in(-\delta,\delta)} \varphi_t(B_\delta(p))\to \R^{n+1}, \quad U(q)=(t,u^1_t(q),\ldots,u^n_t(q)) \quad\text{for } q\in \varphi_t(B_\delta(p)).
\]
In the coordinates $(t,x)$, the Jacobian of this mapping has the form
\[
\begin{bmatrix}
    1       & 0  \\
    \#       & D_x(u_t^1,\ldots,u_t^n)
\end{bmatrix}
\]
and thus is invertible. Consequently $U$ is a coordinate system on a neighbourhood of $p$, where for each $t$, $(u_t^1,\ldots,u_t^n)$ are harmonic coordinates for the metric on the constant $t$-slices. Since the mean curvatures of the slices are independent of the chosen coordinates of the slices, each constant $t$-slice has zero mean curvature. Finally, restricting $U$ to the preimage of a product of an interval and a ball, we obtain a coordinate neighbourhood whose intersection with each slice is connected.
\end{proof}

The next proposition gives a system of equations for the lapse and shift functions in $H$-harmonic coordinates. We consider the setting where $(\Sigma^t,g_t)$, $g_t=\iota_{\Sigma^t}^*\overline g$, is a minimal hypersurface of $(\overline M, \overline g)$ for $t\in (-1,1)$, and the slices $\Sigma^t$ provide a local foliation of $\overline M$. Let $A = A(x,t)$ be the second fundamental form of the slice $\Sigma^t$, which has the coordinate expression $\left(A_{\alpha\beta}\right)_{\alpha,\beta = 1}^n$.
\begin{prop}\label{prop: elliptic equation for lapse and shift}
 Assume that $(t,x^\a)$ are $H$-harmonic coordinates, where each slice $\Sigma^t$ has $H=0$, and
 in which the metric $\bar g$ has the expression given by \eqref{eq: overline g in coordinates}. Then the lapse $u$ and the shift $\sigma = (\sigma^\alpha)_{\alpha =1,\dots, n}$ of these coordinates satisfy on each slice $\Sigma^t$
\begin{equation}\label{eq:lapseshifteq_simple}
\Delta_{g_t}u+\left(\abs{ A}_{g_t}^2+\Ric_{\overline g}(N,N)\right)u=0
 \end{equation}
 and for each $\alpha =1,\dots, n$
 \begin{eqnarray}\label{eq: elliptic PDE for sigma}
 \Delta_{g_t}\sigma^\alpha
=-2u\langle A,\nabla^2x^\alpha\rangle_{g_t}
-2A(\nabla u,\nabla x^\alpha)
-2u(\divergence_{g_t}A)(\nabla x^\alpha).
\end{eqnarray}
Here $\nabla$, $\Delta_{g_t}$ and $\divergence_{g_t}$ are taken on the slice $(\Sigma^t,g_t)$, and the unit normal $N$ of $\Sigma^t$ is given by
\begin{eqnarray}
    \label{eq: normal of slice}
    N = u^{-1} (\partial_t -\sigma^\alpha \partial_\alpha).
\end{eqnarray}
\end{prop}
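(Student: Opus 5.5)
The lapse equation comes from the first variation of mean curvature, and the shift equation from the time derivative of the harmonic-coordinate condition $g_t^{\mu\nu}\Gamma^\alpha_{\mu\nu}(g_t)=0$. Throughout I use the ADM decomposition \eqref{eq: overline g in coordinates}. Inverting it gives that the $\ol g$-orthogonal part of $\p_t$ is $uN$ with $N=u^{-1}(\p_t-\sigma^\alpha\p_\alpha)$, which proves \eqref{eq: normal of slice}. In particular $\ol g(\p_t,N)=u$, and $\p_t=uN+\sigma$ with $\sigma=\sigma^\alpha\p_\alpha$ tangent to the leaves.

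\emph{Lapse equation.} The coordinate vector field $\p_t$ generates a variation of the slices through minimal hypersurfaces, so $\p_t H=0$. The classical first variation formula for the mean curvature along a variation field $uN+\sigma$ reads
\[
\p_tH=-\Delta_{g_t}u-\bigl(|A|^2+\Ric_{\ol g}(N,N)\bigr)u+\sigma(H).
\]
The sign depends on the convention $A(X,Y)=\ol g(\ol\nabla_XN,Y)$, which I would check against \eqref{eq:q_in_terms_of_curvature}. The tangential part contributes $\sigma(H)=0$, since $H\equiv0$ on each slice. This yields \eqref{eq:lapseshifteq_simple}.

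Alternatively, one can derive it in the coordinates directly. Write $\p_tg_{\alpha\beta}=(\mathcal L_{\p_t}\ol g)_{\alpha\beta}=2uA_{\alpha\beta}+\nabla_\alpha\sigma_\beta+\nabla_\beta\sigma_\alpha$. Then $2H=g^{\alpha\beta}\ol g(\ol\nabla_\alpha N,\p_\beta)$, and differentiating in $t$ recovers the same formula. This is essentially the computation already done in Fermi coordinates in Section~\ref{sec:recovery_of_curvature_components} with $u\equiv1$, $\sigma=0$.

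\emph{Shift equation.} Here I differentiate in $t$ the harmonic-coordinate identity $\Delta_{g_t}x^\alpha=0$, using that the coordinate functions $x^\alpha$ do not depend on $t$. For a metric variation $\dot g$, the standard formula is
\[
\dot{(\Delta_g f)}=-\langle \dot g,\nabla^2 f\rangle-\bigl(\divergence\dot g-\tfrac12 d\tr\dot g\bigr)(\nabla f).
\]
Insert $\dot g=2uA+\mathcal L_\sigma g$ with $f=x^\alpha$, and use $\tr A=H=0$.

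The $2uA$ part contributes $-2u\langle A,\nabla^2x^\alpha\rangle-2A(\nabla u,\nabla x^\alpha)-2u(\divergence A)(\nabla x^\alpha)$. This is exactly the right-hand side of \eqref{eq: elliptic PDE for sigma}, up to moving terms across the equation.

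For the $\mathcal L_\sigma g$ part I use naturality of the Laplacian: $\frac{d}{ds}\Delta_{\phi_s^*g}f=[\Delta_g,\mathcal L_\sigma]f$ up to sign. Since $\Delta x^\alpha=0$, this equals $-\Delta_{g_t}(\sigma(x^\alpha))=-\Delta_{g_t}\sigma^\alpha$ up to sign. Setting the total derivative to zero then gives \eqref{eq: elliptic PDE for sigma}, where $\sigma^\alpha=\sigma(x^\alpha)$ is treated as a scalar function for each fixed $\alpha$.

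\emph{Main obstacle.} The difficulty is bookkeeping rather than anything conceptual. One has to keep the overall signs consistent: the nonpositive Laplacian, the sign of $A$, and the sign of the Lie-derivative commutator. One also has to justify that $\p_t$ of the harmonic-coordinate condition may be computed with $x^\alpha$ held fixed. It can, because in $H$-harmonic coordinates $x^\alpha$ are coordinate functions on the whole foliated chart, so $\p_t x^\alpha=0$.

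I would verify the final signs by a test case. On flat $\R^{n+1}$ with a foliation by parallel planes, $A=0$ and the formula must reduce to $\Delta\sigma^\alpha=0$. A further check is a sheared coordinate system $(t,x+tc(x))$ with harmonic $c$. I would also cross-check the formula for $\dot\Delta$ against \eqref{eq:second_linearized_equation}, which has the same structure with $u$ replaced by the graph function.
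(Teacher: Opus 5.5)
Your proposal is correct and takes essentially the same route as the paper's appendix proof: the lapse equation comes from the first variation of $H$ along $\p_t=uN+\sigma$, and the shift equation from differentiating $\Delta_{g_t}x^\alpha=0$ in $t$, using $\p_tg_t=2uA+\mathcal L_\sigma g_t$, the variation formula for the Laplacian, and naturality under the flow of $\sigma$ for the Lie-derivative term. The sign you left open is fixed: $D\Delta_g[\mathcal L_\sigma g]x^\alpha=\sigma(\Delta_g x^\alpha)-\Delta_g(\sigma^\alpha)=-\Delta_{g_t}\sigma^\alpha$, so no hedging is needed (also, in your optional alternative paragraph, $g^{\alpha\beta}\ol g(\ol\nabla_\alpha N,\p_\beta)$ equals $H$, not $2H$).
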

We will prove this proposition in Appendix \ref{sect:appendix}. We remark that if $(\tilde u, \tilde \sigma^\a)$ is another pair of solutions to \eqref{eq:lapseshifteq_simple}--\eqref{eq: elliptic PDE for sigma} on a slice, with the same coefficients $g_t$, $A$ and $\Ric_{\ol g}(N,N)$, which agrees with \( (u, \sigma^\alpha) \)  on an open subset of the slice, then $u=\tilde u$  and $\sigma^\a=\tilde \sigma^\a$  on the whole (connected) slice. This follows from the unique continuation principle (UCP). Indeed, the first equation and UCP imply \( u \equiv \tilde u \), and then, with \( u - \tilde u \) identically zero, the second equation and UCP force \( \sigma^\alpha-\tilde \sigma^\a = 0 \). We also note that the first equation says that the lapse function of any foliation by minimal surfaces satisfies the linearized minimal surface equation (also known as the stability or Jacobi equation). This is a well-known fact, see for example \cite{Neves}.

We now apply these identities with \(n=2\) in the setting of
Section~\ref{sec:bulk_isometry}.
Let $N_1$ and $N_2$ be the smooth unit vector fields on $(\mathcal U_1,\overline g_1)$ and $(\mathcal U_2, \overline g_2)$ respectively which are normal to the leaves $\{\Sigma_1^t\}_{t\in I}$ and $\{\Sigma_2^t\}_{t\in I}$ and point in the direction of increasing $t$. Then $N_1=N_2$ on $\Sigma_1^t\cap\Omega=\Sigma_2^t\cap\Omega$, and we compute the second fundamental forms $A_1$ and $A_2$ of the leaves (in particular those in Theorem \ref{thm:minimal_surface_general}) with respect to these normals. Denote by $\widetilde N := \left(\ol F^{-1}\right)_* N_2$ the $\widetilde g$-unit normal to the leaves $\Sigma^t_1$ (recall that $\ol F:(\mathcal U_1,\wt g)\to (\mathcal U_2,\ol g_2)$ is an isometry mapping $\Sigma_1^t$ onto $\Sigma_2^t$), and set $ N:= N_1$.

\begin{lem}
\label{lem: tilde g is same as barg}
In the coordinate expressions \eqref{eq: tilde g in coordinates} and \eqref{eq: overline g in coordinates} of $\tilde g$ and $\bar g$ we have
$\tilde u = \bar u$ and $\tilde\sigma = \bar \sigma$. That is, $\tilde g = \bar g$ as symmetric $2$-tensors on $\mathcal U_1=\bigcup_t \Sigma^t_1$.
\end{lem}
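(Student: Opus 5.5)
The plan is to compare the two metrics \(\bar g=\bar g_1\) and \(\tilde g=\bar F^*\bar g_2\) in a common system of \(H\)-harmonic coordinates. We then show that their lapses and shifts solve the same elliptic system of Proposition~\ref{prop: elliptic equation for lapse and shift}, with identical coefficients. Finally, unique continuation propagates the agreement from the boundary collar, where \(\bar F=\Id\) by Lemma~\ref{lem: isometry is identity}, to the whole of \(\mathcal U_1\).

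\textbf{Step 1: a common coordinate system.} Around any point of \(\mathcal U_1\), take the \(H\)-harmonic coordinates \((t,x^\alpha)\) of Lemma~\ref{lem: harmonic H-coordinates} for \(\bar g\). Each \(x^\alpha(t,\cdot)\) is harmonic for \(g_1^t=\iota^*\bar g\) on \(\Sigma_1^t\). By \eqref{eq: metrics equal on slice} it is also harmonic for \(\iota^*\tilde g\). Each \(\Sigma_1^t\) is \(\tilde g\)-minimal as well. Hence the same chart is \(H\)-harmonic (\(H=0\)) for both metrics. The representations \eqref{eq: overline g in coordinates} and \eqref{eq: tilde g in coordinates} therefore share \(g_{\alpha\beta}\) and the foliation parameter \(t\). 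Both lapses \(\bar u,\tilde u\) are positive, since \(N\) and \(\tilde N\) both point towards increasing \(t\).

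\textbf{Step 2: equal coefficients.} The second fundamental form \(\tilde A\) of \(\Sigma_1^t\) in \((\mathcal U_1,\tilde g)\) with respect to \(\tilde N\) equals \(F_t^*A_2\), because \(\bar F\) is an isometry carrying \(\tilde N\) to \(N_2\). By the uniqueness remark in Section~\ref{sec:bulk_isometry}, \(F_t\) is exactly the isometry of Theorem~\ref{thm:minimal_surface_general}. Since the normals are matched in \(\Omega\), this gives \(\tilde A=A_1\). By the same reasoning \(\tilde q=F_t^*q_2=q_1\), using \eqref{eq:q_pulled_back}. Then \(\Ric_{\tilde g}(\tilde N,\tilde N)=-\tilde q-|\tilde A|^2=\Ric_{\bar g}(N,N)\). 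Thus the two systems \eqref{eq:lapseshifteq_simple}--\eqref{eq: elliptic PDE for sigma} have identical coefficients \(g_t\), \(A\), \(\divergence A\), \(|A|^2+\Ric(N,N)\) and identical \(\nabla x^\alpha\), \(\nabla^2x^\alpha\). Consequently both \((\bar u,\bar\sigma)\) and \((\tilde u,\tilde\sigma)\) solve the same triangular linear system on each slice of the chart.

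\textbf{Step 3: propagation.} Let \(\mathcal G\subset\mathcal U_1\) be the set of points near which \(\tilde g=\bar g\). It contains the collar \(\mathcal V\cap\mathcal U_1\) from Lemma~\ref{lem: isometry is identity}, since \(\bar F=\Id\) there. Fix a leaf and a point \(p\) of it, and join the collar to \(p\) by a path in the leaf. Cover the path by finitely many \(H\)-harmonic charts whose slice intersections are connected (Lemma~\ref{lem: harmonic H-coordinates}), with consecutive charts overlapping and the first meeting \(\mathcal G\). In a chart meeting \(\mathcal G\), the differences \(\bar u-\tilde u\) and \(\bar\sigma-\tilde\sigma\) vanish on an open subset of each nearby slice, after shrinking the \(t\)-interval. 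The remark following Proposition~\ref{prop: elliptic equation for lapse and shift} then applies: UCP for the Jacobi operator gives \(\bar u=\tilde u\), and afterwards UCP for \(\Delta_{g_t}\) gives \(\bar\sigma=\tilde\sigma\), on the whole slice-piece. So \(\tilde g=\bar g\) on the chart, and induction along the chain reaches \(p\).

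\textbf{Main obstacle.} I expect the delicate point to be this global bookkeeping. We must keep uniform \(t\)-intervals in the chain, making the intersections with \(\mathcal G\) open in a neighbourhood of each leaf rather than just on the leaf itself. We must also check that the normal orientation, and hence the sign of \(A\) in \eqref{eq: elliptic PDE for sigma}, is consistent between \(\tilde N\) and \(N\). This consistency holds because both point towards increasing \(t\) and agree in \(\Omega\).
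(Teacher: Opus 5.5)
Your proposal is correct and follows essentially the same route as the paper. Both use common $H$-harmonic coordinates for $\bar g$ and $\tilde g$, identify the coefficients $g_t$, $A$ and $|A|^2+\Ric(N,N)$ via Theorem~\ref{thm:minimal_surface_general} and \eqref{eq:q_pulled_back}, and apply unique continuation first to the lapse and then to the shift, starting from the collar where $\bar F=\Id$. The only difference is the globalization: the paper runs an open--closed argument on a single fixed leaf, which works because the lapse and shift equations contain no $t$-derivatives, so the $t$-interval bookkeeping you flag as the main obstacle never arises (your finite chain with successive shrinkings is also valid).
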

\begin{proof}
    Note that since for each $t\in I$, $F_t : \Sigma_1^t \to \Sigma_2^t$ is an isometry from $(\Sigma_1^t, g_1^t)$ to $(\Sigma_2^t, g_2^t)$, we already have that
    \begin{eqnarray}\label{eq: pullback are the same}
    \iota_{\Sigma_1^t}^* \tilde g = \iota_{\Sigma_1^t}^* \overline g=g_1^t.
    \end{eqnarray}

    Fix $\hat t\in I$, which we assume without loss of generality to be $\hat t = 0$, and write $\Sigma_1:=\Sigma_1^0$. Define $\mathcal O\subset \Sigma_1$ to be
    \begin{multline}
        \label{eq:identifying set}
        \mathcal O := \{ p \in \Sigma_1 \mid \exists U\subset \Sigma_1\ {\rm open}\ {\rm containing}\ p\ {\rm s.t.}\\ \tilde g(q) = \overline g(q) \text{ as bilinear forms on } T_q\wt M_1 \text{ for all } q\in U\}.
    \end{multline}
    By Lemma \ref{lem: isometry is identity} we have $\ol F=\Id$ on the open set $\mathcal V\cap\mathcal U_1\subset \Omega$, and since $\ol g_1=\ol g_2$ in $\Omega$, it follows that $\tilde g=\ol F^*\ol g_2 = \overline g$ on $\mathcal V\cap\mathcal U_1$. As $\mathcal V\cap\mathcal U_1$ contains a neighbourhood of $\p\Sigma_1$ in $\mathcal U_1$, the set $\mathcal O$ is non-empty. We claim that
    \begin{eqnarray}
        \label{eq: O is everything}
        \mathcal O = \Sigma_1 .
    \end{eqnarray}
By definition, the set $\mathcal O$ is an open subset of $\Sigma_1$. We now need to check that it is also a closed subset. To this end, let $\hat p\in \Sigma_1$ be an accumulation point of $\mathcal O$. The full boundary collar already lies in $\mathcal O$, so it suffices to consider an interior point $\hat p$. Let $\mathcal U\subset \mathcal U_1$ be an open subset containing $\hat p$ on which we can construct the foliation $H$-harmonic (w.r.t. $\bar g$) coordinates
$p\mapsto (t(p), x^1(p), x^2(p))$ described in Lemma \ref{lem: harmonic H-coordinates}, where $\{t(p) = s\} \subset \Sigma_1^s$ and on each slice $\Sigma_1^t$, $p\mapsto (x^1(p), x^2(p))$ forms a $g_1^t$-harmonic coordinate system on the slice. By Lemma \ref{lem: harmonic H-coordinates} we may assume that $U:= \mathcal U \cap \Sigma_1$ is connected. By \eqref{eq: harmonic means harmonic} and \eqref{eq: pullback are the same}, on each slice $\Sigma^t_1$, the functions $x^1(p)$ and $x^2(p)$ are also $\iota_{\Sigma_1^t}^*\widetilde g$-harmonic functions. Since each slice $\Sigma_1^t$ is a $\bar g$-minimal surface, its mean curvature with respect to $\bar g$ vanishes identically. Furthermore, since $\overline F(\Sigma_1^t) = \Sigma_2^t$ are $\bar g_2$-minimal surfaces and $\ol F:(\mathcal U_1,\wt g)\to(\mathcal U_2,\ol g_2)$ is an isometry, the slices $\Sigma_1^t$ are $\tilde g$-minimal surfaces, so their mean curvature with respect to $\tilde g$ vanishes as well. As such,

\begin{equation}\label{eq: both are H-harmonic coordinates with H=0}
(t,x^1,x^2)\ \text{is $H$-harmonic with $H=0$ for both $\bar g$ and $\tilde g$.}
\end{equation}

As $\hat p\in \Sigma_1$ is an accumulation point of $\mathcal O$, we have $U\cap \mathcal O\neq \emptyset$, and hence there is a nonempty open set $V\subset U$ on which $\widetilde g = \overline g$. Using the coordinate expressions \eqref{eq: tilde g in coordinates} and \eqref{eq: overline g in coordinates}, we have
\begin{eqnarray}\label{eq: agree in some nbhd}
\widetilde u = \overline u\ {\rm and}\  \widetilde \sigma = \overline\sigma \quad {\rm in}\ V.
\end{eqnarray}
 By \eqref{eq: pullback are the same},
\begin{eqnarray}\label{eq:lower right corner agree}
    \widetilde g_{\alpha\beta}= \overline g_{\alpha\beta}\quad {\rm in}\ \mathcal U.
\end{eqnarray}

By \eqref{eq:lapseshifteq_simple}, the function $\overline u$ satisfies
$$\Delta_{g_1^0}\overline u+\abs{\overline A}_{g_1^0}^2\overline u+\overline{ \Ric}(N,N)\overline u=0\quad {\rm in}\ U,$$
where $\overline{ \Ric}$ and $\overline A$ are the Ricci curvature and the second fundamental form of $\Sigma_1$ with respect to $\overline g$.
 By \eqref{eq: both are H-harmonic coordinates with H=0}, we have that $\widetilde u$ also satisfies
$$  \Delta_{g_1^0}\widetilde u+\abs{\widetilde A}_{g_1^0}^2\widetilde u+\widetilde{ \Ric}(\widetilde N,\widetilde N)\widetilde u=0\quad {\rm in}\ U,$$
where $\widetilde{ \Ric}$ and $\widetilde A$ are the Ricci curvature and the second fundamental form of $\Sigma_1$ with respect to $\widetilde g$. We claim that
\begin{eqnarray}\label{eq: second fundamental forms agree}
 \abs{\overline A}_{g_1^0}^2+\overline{ \Ric}(N,N) = \abs{\widetilde A}_{g_1^0}^2+\widetilde{ \Ric}(\widetilde N,\widetilde N),\quad{\rm and\ } \quad  \overline A = \widetilde A \quad \text{on } \Sigma_1.
\end{eqnarray}
Indeed, since $\ol F:(\mathcal U_1,\wt g)\to (\mathcal U_2,\ol g_2)$ is an isometry mapping $\Sigma_1$ onto $\Sigma_2:=\Sigma_2^0$ with $\ol F|_{\Sigma_1}=F_0$, we have $\widetilde A=F_0^*A_2$ and $\abs{\widetilde A}^2_{g_1^0}+\widetilde{\Ric}(\widetilde N,\widetilde N)=F_0^*\big(\abs{A_2}^2_{g_2^0}+\Ric(\ol g_2)(N_2,N_2)\big)$, where $A_2$ is the second fundamental form of $\Sigma_2\subset (\wt M_2,\ol g_2)$. By Theorem \ref{thm:minimal_surface_general}, $F_0^*A_2=A_1=\ol A$, and by \eqref{eq:q_pulled_back}, $F_0^*\big(\abs{A_2}^2_{g_2^0}+\Ric(\ol g_2)(N_2,N_2)\big)=-F_0^*q_2=-q_1=\abs{\ol A}^2_{g_1^0}+\ol{\Ric}(N,N)$. This proves \eqref{eq: second fundamental forms agree}.

Combined with \eqref{eq: pullback are the same}, we can subtract the two equations for $\overline u$ and $\widetilde u$ to yield
\begin{eqnarray}\label{eq:elliptic PDE for lapse}
\Delta_{g_1^0}(\overline u - \widetilde u)+\left(\abs{\overline A}_{g_1^0}^2+\overline{ \Ric}(N,N)\right)(\overline u-\widetilde u)=0 \quad {\rm in}\ U.
\end{eqnarray}

Now by \eqref{eq: agree in some nbhd}, $\overline u = \widetilde u$ in $V$, so by unique continuation (recall that $U$ is connected) \eqref{eq:elliptic PDE for lapse} implies that
\begin{eqnarray}\label{eq: lapse agree}
u:=  \overline u = \widetilde u \quad {\rm in} \ U.
\end{eqnarray}
Now by \eqref{eq: elliptic PDE for sigma}, we have that the following equations for $\overline \sigma^\alpha$ and $\widetilde \sigma^\alpha$ hold in $U$:
\begin{eqnarray}\label{eq: shift overline}
\quad\Delta_{g_1^0}  \overline\sigma^\a =  -2u\langle \overline A,\nabla^2x^\alpha\rangle_{g_1^0}
-2\overline A(\nabla u,\nabla x^\alpha)
-2u(\divergence_{g_1^0}\overline A)(\nabla x^\alpha),
\end{eqnarray}
\begin{eqnarray}\label{eq: shift tilde}
\quad \Delta_{g_1^0}  \widetilde\sigma^\a =  -2u\langle \widetilde A,\nabla^2x^\alpha\rangle_{g_1^0}
-2\widetilde A(\nabla u,\nabla x^\alpha)
-2u(\divergence_{g_1^0}\widetilde A)(\nabla x^\alpha).
\end{eqnarray}
Note that by \eqref{eq: pullback are the same}, the connection $\nabla$ is the Levi-Civita connection of $g_1^0=\iota_{\Sigma_1}^*\widetilde g= \iota_{\Sigma_1}^*\overline g$, and we have used \eqref{eq: lapse agree}. Furthermore, by \eqref{eq: second fundamental forms agree}, we have that $\widetilde A = \overline A$. Therefore, subtracting \eqref{eq: shift overline} from \eqref{eq: shift tilde}, we have that $\overline \sigma^\alpha- \widetilde \sigma^\alpha$ is harmonic in $U$.

Combined with \eqref{eq: agree in some nbhd}, unique continuation then gives that $\overline\sigma = \widetilde\sigma$ in $U$. Combined with \eqref{eq: lapse agree} and \eqref{eq:lower right corner agree}, we have that $\overline g = \widetilde g$ at every point of $U$, since the metrics have the coordinate expressions given by \eqref{eq: overline g in coordinates} and \eqref{eq: tilde g in coordinates}.

By definition, this means that $\hat p \in \mathcal O$, and we have that $\mathcal O$ is a non-empty open and closed subset of the connected surface $\Sigma_1$. So \eqref{eq: O is everything} holds. Since $\hat t\in I$ was arbitrary, the proof is complete.
\end{proof}

\begin{proof}[Proof of Theorems \ref{thm:main1} and \ref{thm:main_local}]
Let the assumptions of Theorem \ref{thm:main1} or of Theorem \ref{thm:main_local} hold, and let $\ol F:\mathcal U_1\to\mathcal U_2$ be the map constructed in Section \ref{sec:bulk_isometry}. By Lemma \ref{lem: F is diffeo}, $\ol F$ is a diffeomorphism, and by Lemma \ref{lem: tilde g is same as barg}, $\ol F^*\ol g_2=\wt g=\ol g=\ol g_1$. Thus $\ol F:(\mathcal U_1,\ol g_1)\to (\mathcal U_2,\ol g_2)$ is an isometry. By construction $\ol F$ maps $\Sigma_1^t$ onto $\Sigma_2^t$ for every $t\in I$, and by Lemma \ref{lem: isometry is identity} it is the identity on the neighbourhood $\mathcal V\cap \mathcal U_1\subset \Omega$ of $\bigcup_{t\in I}\p\Sigma_1^t$. This proves Theorem \ref{thm:main_local} and the first part of Theorem \ref{thm:main1}.

Assume finally that every connected component of $\Sigma_1^t\cap\Omega$ meets $\p\Sigma_1^t$ for every $t\in I$. Then by the last statement of Lemma \ref{lem: isometry is identity}, $F_t$ is the identity on $\Sigma_1^t\cap \Omega$ for every $t$, that is, $\ol F$ is the identity on $\mathcal U_1\cap\Omega=\mathcal U_2\cap \Omega$. Consequently
\[
\ol F(M_1)=\ol F(\mathcal U_1\setminus\Omega)=\mathcal U_2\setminus \Omega=M_2,
\]
so that $\ol F$ restricts to an isometry $(M_1,\ol g_1)\to (M_2,\ol g_2)$. This completes the proof of Theorem \ref{thm:main1}.
\end{proof}

\section{Einstein rigidity}\label{sec:einstein}

    \begin{proof}[Proof of Theorem \ref{thm:main2}]
Let $x_0\in M$. Since $(\widetilde M, \bar g_1)$ is non-trapping, there is a non-degenerate embedded minimal surface $\Sigma_1\subset \widetilde M$ of $(\wt M,\ol g_1)$ that contains $x_0$ and has $\p\Sigma_1\subset\Omega$. By the assumption $\mathcal A_1=\mathcal A_2$, there is a non-degenerate embedded minimal surface $\Sigma_2\subset \widetilde M$ of $(\wt M,\ol g_2)$ with $\Sigma_1\cap\Omega=\Sigma_2\cap\Omega$ whose area data agree with those of $\Sigma_1$.

By Proposition \ref{prop:slice_curvature}, there exists an isometry $F:(\Sigma_1,g_1)\to (\Sigma_2,g_2)$ of the induced metrics such that for all $X,Y\in T\Sigma_1$
 \begin{align*}
  \big(F^*\Ric(\ol g_2)\big)(X,Y)&=\Ric (\ol g_1)(X,Y), \\
  F^*\big(\Ric (\ol g_2)(N_2,N_2)\big)&=\Ric (\ol g_1)(N_1,N_1),  \\
  (F^*\Theta_2)(X)&=\Theta_1(X),\\
  F^*(R(\ol g_2))&=R(\ol g_1).
 \end{align*}
Here $N_1$ and $N_2$ are the unit normal vector fields of $\Sigma_1$ and $\Sigma_2$, and $\Theta_j(X)=\Ric(\ol g_j)(N_j,X)$ for $X\in T\Sigma_j$. Let $n=3$. Since $\ol g_2$ is Einstein, $\Ric(\ol g_2)=\frac{R(\ol g_2)}{n}\ol g_2$. Thus the normal component of the trace-free Ricci tensor of $\ol g_1$ at $x_0$ satisfies
\begin{multline*}
0=F^*\left(\Ric(\ol g_2)(N_2,N_2)-\frac{R(\ol g_2)}{n}\ol g_2(N_2,N_2)\right) \\
=\Ric(\ol g_1)(N_1,N_1)-\frac{R(\ol g_1)}{n}\ol g_1(N_1,N_1).
\end{multline*}
Here we used that $\ol g_1(N_1,N_1)=\ol g_2(N_2,N_2)=1$.

Now let $X,Y\in T\Sigma_1$ and denote $\widetilde X = F_*X\in T\Sigma_2$ and $\widetilde Y = F_*Y\in T\Sigma_2$. We compute
\begin{multline*}
0 = F^*\left(\Ric(\bar g_2)(\widetilde X,\widetilde Y)-\frac{R(\bar g_2)}{n}\bar g_2(\widetilde X,\widetilde Y)\right) \\
= (F^*\Ric(\bar g_2))(X,Y) - \frac{F^*(R(\bar g_2))}{n} F^*(\bar g_2(\widetilde X,\widetilde Y)) \\
= \Ric(\bar g_1)(X,Y) - \frac{R(\bar g_1)}{n} \bar g_1(X,Y),
\end{multline*}
where we have used that $F^*(\bar g_2(\widetilde X,\widetilde Y)) = \bar g_1(X,Y)$ since $F$ is an isometry between the induced metrics on $\Sigma_1$ and $\Sigma_2$. Finally, the mixed components vanish as well: since $\ol g_2(N_2,\widetilde X)=0$, the Einstein condition gives $\Theta_2=0$, and hence
\[
\Ric(\bar g_1)(N_1,X)-\frac{R(\bar g_1)}{n}\bar g_1(N_1,X)=\Theta_1(X)=(F^*\Theta_2)(X)=0.
\]

Since $T_{x_0}\wt M=T_{x_0}\Sigma_1\oplus \R N_1$, the three identities above show that
\[
\Ric(\bar g_1)-\frac{R(\bar g_1)}{n}\bar g_1 = 0
\]
at $x_0$. Since $x_0\in M$ was arbitrary, $\bar g_1$ is Einstein on $M$. In the exterior it equals the Einstein metric $\bar g_2$, so it is Einstein on $\widetilde M$. The two Einstein constants agree: in dimension three each is constant on the connected manifold, by the contracted Bianchi identity, and the metrics agree in the nonempty exterior.

Both Einstein metrics are real analytic in harmonic coordinates by
\cite[Theorem~5.2]{deturck1981some}. For continuation of local isometries,
see \cite[Theorem~5.4]{deturck1981some} and
\cite[proof of Theorem~1.4]{graham2019x}.
Choose a smooth compact collar enlargement \(M^+\) of \(M\) strictly inside
\(\widetilde M\), with \(\p M^+\subset\Omega\). It is diffeomorphic to \(M\),
so \(\pi_1(M^+,\p M^+)=0\), and the two metrics agree on a full connected
collar of \(\p M^+\).

In this compact-boundary setting, the common-extension conclusion in
\cite[Section~3, following the proof of Theorem~3.1]{anderson2008unique},
read with \cite{anderson2010erratum}, realizes the two copies of \(M^+\)
as isometrically embedded domains \(\iota_j(M^+)\) in a common Einstein
manifold \(P\), extending the identity on a collar patch. Uniqueness of local
isometries extends this identification over the full connected collar.
Thus their embedded boundaries coincide, say
\(B=\iota_1(\p M^+)=\iota_2(\p M^+)\), and the domains have the same
interior side near \(B\). Each \(\iota_j(\operatorname{Int}M^+)\) is a
connected open and closed subset of \(P\setminus B\), where closedness
follows from compactness of \(M^+\). They intersect in the common interior
collar and hence coincide. Therefore \(\iota_2^{-1}\circ\iota_1\) is an
isometry of \((M^+,\bar g_1)\) onto \((M^+,\bar g_2)\), equal to the
identity in that collar.

By uniqueness of local isometries, this map is the identity throughout the
connected attached region \(M^+\setminus M\), where the metrics agree.
It maps the complementary copy of \(M\) to itself and fixes \(\p M\) by
continuity. Its restriction is the required \(\ol F\).

The Ricci-flat case is the same argument with Einstein constant zero.
\end{proof}

\appendix
\section{Derivation of the lapse and shift equations}\label{sect:appendix}

Throughout this appendix, let $(\ol{M}^{n+1},\og)$ be a smooth
Riemannian manifold and let a coordinate neighbourhood be foliated by
hypersurfaces
\[
    \Sigma_t=\{t=\text{constant}\}.
\]
Let $g_t$ be the metric induced on $\Sigma_t$, let $N$ be the chosen
$\og$-unit normal, and write
\[
    \dt=uN+\sigma,\qquad
    \sigma=\sigma^\alpha\p_\alpha,\qquad u>0.
\]
Here Greek indices refer to tangential coordinates $x^1,\ldots,x^n$ on the
leaves. Thus
\[
    N=u^{-1}(\dt-\sigma^\alpha\p_\alpha).
\]
The ambient metric has the form
\begin{equation}\label{coord-appendix:eq:ambient-metric}
    \og=u^2\,dt^2
    +g_{\alpha\beta}(dx^\alpha+\sigma^\alpha\,dt)
                         (dx^\beta+\sigma^\beta\,dt).
\end{equation}
Equivalently,
\begin{equation}\label{coord-appendix:eq:metric-components}
    \og_{\alpha\beta}=g_{\alpha\beta},\qquad
    \og_{t\alpha}=g_{\alpha\beta}\sigma^\beta,\qquad
    \og_{tt}=u^2+g_{\alpha\beta}\sigma^\alpha\sigma^\beta.
\end{equation}
The inverse metric is
\begin{equation}\label{coord-appendix:eq:inverse-metric}
    \og^{tt}=u^{-2},\qquad
    \og^{t\alpha}=-u^{-2}\sigma^\alpha,\qquad
    \og^{\alpha\beta}=g^{\alpha\beta}
                                  +u^{-2}\sigma^\alpha\sigma^\beta.
\end{equation}
We use the convention
\begin{equation}\label{coord-appendix:eq:second-fundamental-form}
    A(X,Y)=\og(\ol{\nabla}_X N,Y),\qquad
    X,Y\in T\Sigma_t,
\end{equation}
for the scalar second fundamental form of the leaves, and
\[
    H=\tr_{g_t}A
\]
for the mean curvature.

\begin{lem}\label{coord-appendix:lem:normal-acceleration}
For every vector field $X$ tangent to the leaves,
\begin{equation}\label{coord-appendix:eq:normal-acceleration}
    \og(\ol{\nabla}_N N,X)=-X(\log u).
\end{equation}
Equivalently,
\[
    \ol{\nabla}_N N=-\nabla^{g_t}\log u.
\]
\end{lem}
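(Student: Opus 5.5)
The plan is to compute $\og(\ol\nabla_NN,X)$ from the fact that $N$ is a unit normal to the level sets of $t$. The key observation is that the $\og$-gradient of $t$ is normal to the leaves with length $u^{-1}$. By \eqref{coord-appendix:eq:inverse-metric}, we get
\[
\ol\nabla t=\og^{tt}\p_t+\og^{t\alpha}\p_\alpha=u^{-2}(\p_t-\sigma^\alpha\p_\alpha)=u^{-1}N .
\]
Hence $N=u\,\ol\nabla t$; that is, the $1$-form $N^\flat=\og(N,\cdot)$ equals $u\,dt$. This is the only place the coordinate form of the metric enters.

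Next we use that $N$ is unit. Differentiating $\og(N,N)=1$ along $X$ gives $\og(\ol\nabla_XN,N)=0$. We then write
\[
\og(\ol\nabla_NN,X)=(\ol\nabla_N N^\flat)(X)=(\ol\nabla_N (u\,dt))(X)
=N(u)\,dt(X)+u\,(\ol\nabla_N dt)(X).
\]
The first term vanishes because $X$ is tangent to the leaves, so $dt(X)=0$. For the second term we use that $\ol\nabla dt$ is the Hessian of $t$, which is symmetric. Therefore
\[
(\ol\nabla_Ndt)(X)=(\ol\nabla_Xdt)(N)=\ol\nabla_X\bigl(u^{-1}N^\flat\bigr)(N)
=X(u^{-1})\og(N,N)+u^{-1}\og(\ol\nabla_XN,N)=X(u^{-1}).
\]
Multiplying by $u$ gives $u\,X(u^{-1})=-X(\log u)$, which is \eqref{coord-appendix:eq:normal-acceleration}.

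For the equivalent form, note that $\og(\ol\nabla_NN,N)=\tfrac12N\og(N,N)=0$, so $\ol\nabla_NN$ is tangent to the leaves. Its pairing with every tangent $X$ equals that of $-\nabla^{g_t}\log u$, since $g_t$ is the restriction of $\og$ and $\nabla^{g_t}\log u$ is the leaf gradient. This gives the vector identity.

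No real obstacle is expected. The one point needing care is using symmetry of the Hessian of $t$ rather than some non-tensorial identity. An alternative check is to expand $\og(\ol\nabla_{\p_t-\sigma}(\p_t-\sigma),\p_\beta)$ directly with Christoffel symbols from \eqref{coord-appendix:eq:metric-components}; this is more laborious but could confirm the sign convention.
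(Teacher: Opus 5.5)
Your proof is correct and is essentially the paper's argument: both rest on $N^\flat=u\,dt$, the unit length of $N$, and the closedness of $dt$. You express the last ingredient as symmetry of the Hessian $\ol\nabla^2 t$, whereas the paper differentiates $\og(N,X)=0$ along $N$ and combines torsion-freeness with $dt([N,X])=N(dt(X))-X(dt(N))$, which amounts to the same identity.
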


\begin{proof}
Since $N$ is unit, $\ol{\nabla}_N N$ is tangent to the leaves. Also
$N^\flat=u\,dt$, because $N$ is orthogonal to each $\Sigma_t$ and
$dt(N)=u^{-1}$. If $X$ is tangential, then $dt(X)=0$. Differentiating
$\og(N,X)=0$ along $N$ gives
\[
    \og(\ol{\nabla}_N N,X)
    =-\og(N,\ol{\nabla}_N X).
\]
Using torsion freeness,
\[
    \ol{\nabla}_N X=\ol{\nabla}_X N+[N,X].
\]
The first term on the right is orthogonal to $N$, and therefore
\[
    \og(\ol{\nabla}_N N,X)
    =-\og(N,[N,X])=-u\,dt([N,X]).
\]
Since $dt(X)=0$ and $dt(N)=u^{-1}$,
\[
    dt([N,X])=N(dt(X))-X(dt(N))=-X(u^{-1})=u^{-2}X(u).
\]
Thus
\[
    \og(\ol{\nabla}_N N,X)
    =-u^{-1}X(u)=-X(\log u).
\]
\end{proof}

\begin{lem}\label{coord-appendix:lem:metric-evolution}
The induced metrics $g_t$ satisfy
\begin{equation}\label{coord-appendix:eq:metric-evolution}
    \dt g_t=2uA+\mathcal{L}_\sigma g_t.
\end{equation}
In coordinates,
\begin{equation}\label{coord-appendix:eq:second-fundamental-coordinates}
    A_{\alpha\beta}=\frac{1}{2u}
    \bigl(\dt g_{\alpha\beta}
          -\nabla_\alpha\sigma_\beta-\nabla_\beta\sigma_\alpha\bigr),
\end{equation}
where $\sigma_\alpha=g_{\alpha\beta}\sigma^\beta$ and $\nabla$ is the
Levi-Civita connection of $g_t$.
\end{lem}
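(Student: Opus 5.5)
The plan is to compute $\dt g_{\alpha\beta}$ directly as a Lie derivative. Since $(t,x^\alpha)$ are coordinates, the coordinate fields commute, $[\dt,\p_\alpha]=0$. We also have $g_{\alpha\beta}=\og(\p_\alpha,\p_\beta)$. Hence
\[
\dt g_{\alpha\beta}=(\mathcal L_{\dt}\og)(\p_\alpha,\p_\beta)
=\og(\ol\nabla_{\p_\alpha}\dt,\p_\beta)+\og(\p_\alpha,\ol\nabla_{\p_\beta}\dt).
\]
Here we use metric compatibility and torsion freeness, which give $\ol\nabla_{\dt}\p_\alpha=\ol\nabla_{\p_\alpha}\dt$.

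Next we substitute the decomposition $\dt=uN+\sigma$ into each term. For a tangential $Y$ we get
\[
\og(\ol\nabla_{\p_\alpha}(uN),Y)=(\p_\alpha u)\,\og(N,Y)+u\,\og(\ol\nabla_{\p_\alpha}N,Y)=u\,A(\p_\alpha,Y),
\]
because $\og(N,Y)=0$. Summing the two symmetric contributions gives $2uA_{\alpha\beta}$, since $A$ is symmetric (the leaves are hypersurfaces, or equivalently $\og(N,[X,Y])=0$).

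For the shift part we need $\og(\ol\nabla_{\p_\alpha}\sigma,\p_\beta)$. Because $\sigma$ and $\p_\beta$ are tangent to the leaf, this tangential projection equals $g_t(\nabla_{\p_\alpha}\sigma,\p_\beta)=\nabla_\alpha\sigma_\beta$ by the Gauss formula: the normal component of $\ol\nabla$ drops out, and the induced connection is the Levi-Civita connection of $g_t$. Symmetrizing then yields $\nabla_\alpha\sigma_\beta+\nabla_\beta\sigma_\alpha=(\mathcal L_\sigma g_t)_{\alpha\beta}$. Adding the two parts proves \eqref{coord-appendix:eq:metric-evolution}. Solving for $A$, using $u>0$, gives \eqref{coord-appendix:eq:second-fundamental-coordinates}.

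There is essentially no obstacle. The only points needing care are the sign convention for $A$, which matches \eqref{coord-appendix:eq:second-fundamental-form} directly, and the interpretation of $\dt g_t$ as the $t$-derivative of the components in fixed leaf coordinates $x^\alpha$. That interpretation is exactly what justifies $[\dt,\p_\alpha]=0$.

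As a consistency check, in Fermi coordinates we have $u=1$ and $\sigma=0$, and the formula reduces to \eqref{eq:2nd_fund}.
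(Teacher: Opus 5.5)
Your proof is correct and follows the paper's argument: both identify $\dt g_{\alpha\beta}$ with $(\mathcal{L}_{\dt}\og)(\p_\alpha,\p_\beta)$ using $[\dt,\p_\alpha]=0$, then split $\dt=uN+\sigma$ into a normal part giving $2uA$ and a tangential part giving $\mathcal{L}_\sigma g_t$. The only difference is that you spell out, via the Levi-Civita form of the Lie derivative and the Gauss formula, the two steps the paper states without computation.
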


\begin{proof}
For tangential coordinate vector fields $\p_\alpha,\p_\beta$,
the coordinate brackets with $\dt$ vanish. Hence
\[
    \dt g_{\alpha\beta}
    =(\mathcal{L}_{\dt}\og)(\p_\alpha,\p_\beta).
\]
Since $\dt=uN+\sigma$, the normal part gives
\[
    (\mathcal{L}_{uN}\og)(\p_\alpha,\p_\beta)
    =2uA_{\alpha\beta},
\]
and the tangential part gives $(\mathcal{L}_\sigma g_t)_{\alpha\beta}$.
This proves \eqref{coord-appendix:eq:metric-evolution}.
\eqref{coord-appendix:eq:second-fundamental-coordinates} follows by writing the Lie
derivative of $g_t$ covariantly.
\end{proof}

\begin{lem}\label{coord-appendix:lem:laplacian-variation}
Let $g_s$ be a smooth family of metrics on a fixed manifold, and write
\[
    h=\left.\frac{d}{ds}\right|_{s=0}g_s.
\]
For a fixed smooth function $f$,
\begin{equation}\label{coord-appendix:eq:laplacian-variation}
    \left.\frac{d}{ds}\right|_{s=0}\D_{g_s}f
    = D\D_g[h]f:=-\langle h,\nabla^2 f\rangle_g
     +\left\langle-\divergence_g h+\frac12 d(\tr_g h),df\right\rangle_g.
\end{equation}
Here
\[
    (\divergence_g h)_j=\nabla^i h_{ij}.
\]
\end{lem}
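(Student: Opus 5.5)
Fix \(f\) and \(x\) and compute in local coordinates. Expand \(\D_{g_s}f=g_s^{ij}\bigl(\p_i\p_jf-\Gamma(g_s)^k_{ij}\p_kf\bigr)\) and differentiate at \(s=0\) with the product rule. This gives two contributions. One comes from the inverse metric, the other from the Christoffel symbols:
\[
 \left.\frac{d}{ds}\right|_{s=0}\D_{g_s}f
 =\bigl(\p_s g_s^{ij}\bigr)\nabla_i\nabla_jf
 -g^{ij}\bigl(\p_s\Gamma^k_{ij}\bigr)\p_kf .
\]

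\textbf{First term.} Differentiating \(g_s^{-1}g_s=I\) gives \(\p_sg_s^{ij}=-g^{ia}h_{ab}g^{bj}\). Hence the first term is \(-\langle h,\nabla^2f\rangle_g\).

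\textbf{Second term.} The derivative of the Christoffel symbols is the tensor
\[
 \p_s\Gamma^k_{ij}=\tfrac12 g^{kl}\bigl(\nabla_ih_{jl}+\nabla_jh_{il}-\nabla_lh_{ij}\bigr).
\]
To verify this tensorial formula cleanly, work at \(x\) in normal coordinates for \(g\), so that all first derivatives of \(g\) vanish there. Then the derivative of \(\tfrac12 g^{kl}(\p_ig_{jl}+\dots)\) reduces to partial derivatives of \(h\), and at \(x\) these equal covariant derivatives. Contracting with \(g^{ij}\) gives
\[
 g^{ij}\p_s\Gamma^k_{ij}=g^{kl}\bigl(\nabla^ih_{il}-\tfrac12\nabla_l\tr_gh\bigr)
 =\bigl(\divergence_gh-\tfrac12 d\tr_gh\bigr)^k .
\]
The second term is therefore \(\langle-\divergence_gh+\tfrac12 d(\tr_gh),df\rangle_g\).

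Adding the two terms gives \eqref{coord-appendix:eq:laplacian-variation}. Both sides are tensorial, so the identity holds independently of the coordinates used.

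The only delicate step is bookkeeping of signs and index placement in the Christoffel variation, in particular confirming that the trace term enters with coefficient \(+\tfrac12\) and the divergence term with a minus sign. The normal-coordinate computation settles this. As a check, take the conformal variation \(h=2\phi g\) in dimension \(m\). Then \(\divergence_g h=2\,d\phi\) and \(\tfrac12 d\tr_g h=m\,d\phi\), so the formula gives \(-2\phi\D f+(m-2)\langle d\phi,df\rangle\). This matches the known conformal change of the Laplacian.
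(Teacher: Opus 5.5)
Your proposal is correct and follows the same route as the paper's proof: expand $\Delta_{g_s}f=g_s^{ij}(\partial_i\partial_jf-\Gamma^k_{ij}\partial_kf)$ in coordinates, substitute $\dot g^{ij}=-h^{ij}$ and the tensorial Christoffel variation $\dot\Gamma^k_{ij}=\tfrac12 g^{k\ell}(\nabla_ih_{j\ell}+\nabla_jh_{i\ell}-\nabla_\ell h_{ij})$, and contract. Your normal-coordinate derivation of $\dot\Gamma$ and the conformal consistency check only fill in details that the paper states without proof.
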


\begin{proof}
In local coordinates,
\[
    \D_g f=g^{ij}(\p_i\p_j f-\Gamma^k_{ij}\p_k f).
\]
The inverse metric variation is $\dot{g}^{ij}=-h^{ij}$, and
\[
    \dot{\Gamma}^k_{ij}=\frac12 g^{k\ell}
    (\nabla_i h_{j\ell}+\nabla_j h_{i\ell}-\nabla_\ell h_{ij}).
\]
Substituting these two identities into the coordinate expression for
$\D_g f$ gives \eqref{coord-appendix:eq:laplacian-variation}.
\end{proof}

\begin{lem}\label{coord-appendix:lem:lie-laplacian}
If $V$ is a vector field tangent to a leaf and $f$ is a smooth function on
that leaf, then
\begin{equation}\label{coord-appendix:eq:lie-laplacian}
    D\D_g[\mathcal{L}_V g]f=V(\D_g f)-\D_g(Vf).
\end{equation}
In particular,
if $\D_g f=0$, then
\begin{equation}\label{coord-appendix:eq:lie-laplacian-harmonic}
    D\D_g[\mathcal{L}_V g]f=-\D_g(Vf).
\end{equation}
\end{lem}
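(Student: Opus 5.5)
The identity says that the variation of the Laplacian along a Lie-derivative direction of the metric equals the commutator of $V$ with $\D_g$. I plan to get it from diffeomorphism invariance of the Laplace--Beltrami operator rather than by expanding the formula of Lemma~\ref{coord-appendix:lem:laplacian-variation} in coordinates.

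\textbf{Step 1: the flow identity.} Let $\phi_s$ be the local flow of $V$ on the leaf. Naturality of the Laplacian under diffeomorphisms gives
\[
 \D_{\phi_s^*g}(\phi_s^*f)=\phi_s^*(\D_g f).
\]
We work on a relatively compact open subset where the flow exists for small $|s|$. Since the statement is pointwise, this suffices.

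\textbf{Step 2: differentiate at $s=0$.} We have
\[
 \left.\tfrac{d}{ds}\right|_{s=0}\phi_s^*g=\mathcal L_Vg,\qquad \left.\tfrac{d}{ds}\right|_{s=0}\phi_s^*f=Vf.
\]
The left-hand side depends jointly smoothly on the metric and on the function, and is linear in the function. Its $s$-derivative therefore splits by the chain rule into two pieces:
\begin{itemize}
\item the metric variation with $f$ held fixed, which is $D\D_g[\mathcal L_Vg]f$ by Lemma~\ref{coord-appendix:lem:laplacian-variation};
\item the function variation with $g$ held fixed, which is $\D_g(Vf)$.
\end{itemize}
The right-hand side differentiates to $\mathcal L_V(\D_g f)=V(\D_g f)$. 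Equating the two sides gives
\[
 D\D_g[\mathcal L_Vg]f+\D_g(Vf)=V(\D_g f),
\]
which is \eqref{coord-appendix:eq:lie-laplacian}. Setting $\D_g f=0$ immediately yields \eqref{coord-appendix:eq:lie-laplacian-harmonic}.

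\textbf{Main point to check.} The only step needing care is that $D\D_g[h]$ as defined in \eqref{coord-appendix:eq:laplacian-variation} really is the first variation of $\D_{g_s}f$ for every smooth family $g_s$ with $\dot g=h$, here $g_s=\phi_s^*g$. Lemma~\ref{coord-appendix:lem:laplacian-variation} states exactly this, so its formula applies with $h=\mathcal L_Vg$.

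\textbf{Alternative route.} As a cross-check, or if one prefers a direct argument, substitute $h_{ij}=\nabla_iV_j+\nabla_jV_i$ into \eqref{coord-appendix:eq:laplacian-variation}. Then commute covariant derivatives using the Ricci identity. The resulting curvature terms $\Ric(V,\nabla f)$ cancel between $\divergence_g h$ and the third-derivative term in $\D_g(Vf)$.
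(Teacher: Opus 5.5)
Your proof is correct and is essentially the paper's argument. The paper also differentiates the naturality of the Laplacian under the flow $\Phi_s$ of $V$ at $s=0$. It writes this naturality as $\Delta_{\Phi_s^*g}f=\Phi_s^*\bigl(\Delta_g((\Phi_s^{-1})^*f)\bigr)$, so only the metric varies on the left. You instead put $\phi_s^*f$ on the left and split the derivative by the chain rule, which is an equivalent bookkeeping of the same identity.
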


\begin{proof}
Let $\Phi_s$ be the flow of $V$. Since $\Phi_s^*g$ has first variation
$\mathcal{L}_V g$,
\[
    \D_{\Phi_s^*g}f
    =\Phi_s^*\bigl(\D_g((\Phi_s^{-1})^*f)\bigr).
\]
Differentiating at $s=0$ gives \eqref{coord-appendix:eq:lie-laplacian}.
\end{proof}
The next proposition is a restatement of Proposition \ref{prop: elliptic equation for lapse and shift}, and proves it.
\begin{prop}\label{coord-appendix:prop:lapse-shift}
Assume that $(t,x^1,\ldots,x^n)$ are $H$-harmonic coordinates and that each
leaf $\Sigma_t$ is minimal. Thus
\[
    H=0,\qquad \D_{g_t}x^\alpha=0,\qquad \alpha=1,\ldots,n.
\]
Then the lapse satisfies
\begin{equation}\label{coord-appendix:eq:lapse}
    \D_{g_t}u+
    \bigl(|A|_{g_t}^2+\Ric_{\og}(N,N)\bigr)u=0.
\end{equation}
The shift satisfies, for each $\alpha=1,\ldots,n$,
\begin{equation}\label{coord-appendix:eq:shift}
\begin{split}
    \D_{g_t}\sigma^\alpha
    &=-2u\langle A,\nabla^2x^\alpha\rangle_{g_t}
      -2A(\nabla u,\nabla x^\alpha)\\
    &\qquad-2u(\divergence_{g_t}A)(\nabla x^\alpha).
\end{split}
\end{equation}
Here $\nabla$, $\D_{g_t}$, and $\divergence_{g_t}$ are taken on the leaf
$(\Sigma_t,g_t)$, and
\[
    (\divergence_{g_t}A)(X)=\nabla^\beta A_{\beta\gamma}X^\gamma.
\]
\end{prop}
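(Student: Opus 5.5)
For the lapse equation \eqref{coord-appendix:eq:lapse}, I would use the fact that every leaf is minimal, so $H\equiv0$ for all $t$. The plan is to differentiate $H$ along the variation field $\dt=uN+\sigma$. The tangential part $\sigma$ only reparametrizes the leaf, so it contributes $\sigma(H)=0$. The normal part $uN$ contributes the standard first variation of mean curvature, $-\D_{g_t}u-(|A|^2+\Ric_{\og}(N,N))u$, up to the sign fixed by the convention \eqref{coord-appendix:eq:second-fundamental-form}. Setting $\dt H=0$ then gives \eqref{coord-appendix:eq:lapse}.

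To stay self-contained, I would derive this variation from the appendix lemmas. Write $H=g^{\alpha\beta}A_{\alpha\beta}$ and differentiate in $t$. Lemma~\ref{coord-appendix:lem:metric-evolution} gives $\dt g^{\alpha\beta}=-2uA^{\alpha\beta}-(\mathcal L_\sigma g)^{\alpha\beta}$. For $\dt A_{\alpha\beta}$, I would compute $\og(\ol\nabla_{\dt}\ol\nabla_{\p_\alpha}N,\p_\beta)$, commute the derivatives via the curvature tensor, and then use Lemma~\ref{coord-appendix:lem:normal-acceleration} to turn the $\ol\nabla_NN$ terms into $-\nabla\log u$. This yields the known formula $\dt A=-\nabla^2u+u(A^2-\Riem(\cdot,N,N,\cdot))+\mathcal L_\sigma A$. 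Tracing it, the $\sigma$-terms combine to $\sigma(H)=0$, and the trace of the Riemann term is $\Ric(N,N)$. This is a standard computation that only requires care with signs.

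For the shift equation \eqref{coord-appendix:eq:shift}, I would differentiate the harmonic-coordinate condition $\D_{g_t}x^\alpha=0$ in $t$. The coordinate functions $x^\alpha$ are independent of $t$ in the chart, so $0=\dt(\D_{g_t}x^\alpha)=D\D_g[\dt g_t]x^\alpha$. By Lemma~\ref{coord-appendix:lem:metric-evolution} this splits as $D\D_g[2uA]x^\alpha+D\D_g[\mathcal L_\sigma g]x^\alpha$. Since $x^\alpha$ is harmonic, Lemma~\ref{coord-appendix:lem:lie-laplacian} reduces the second term to $-\D_g(\sigma x^\alpha)=-\D_g\sigma^\alpha$. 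For the first term I apply Lemma~\ref{coord-appendix:lem:laplacian-variation} with $h=2uA$. Since $H=0$, $\tr_g h=0$, so the result is $-2u\langle A,\nabla^2x^\alpha\rangle-\langle\divergence(2uA),dx^\alpha\rangle$. Expanding $\divergence(2uA)=2A(\nabla u,\cdot)+2u\,\divergence A$ gives exactly the right-hand side of \eqref{coord-appendix:eq:shift}.

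The main obstacle is the lapse equation: the second-fundamental-form evolution must be carried out with correct signs and with proper handling of the shift contribution. I would either cite the classical formula for the variation of mean curvature (e.g.\ \cite{colding2011course}), checking its sign against the convention $A(X,Y)=\og(\ol\nabla_XN,Y)$, or verify the $\sigma$-cancellation using diffeomorphism invariance of $H$, as in Lemma~\ref{coord-appendix:lem:lie-laplacian}. The shift equation, by contrast, is a direct bookkeeping consequence of the lemmas already stated.
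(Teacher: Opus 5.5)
Your proposal is correct and follows the paper's proof. For the lapse, the paper also applies the normal variation formula for $H$ to $\partial_t = uN+\sigma$ and notes that the tangential part contributes only $\sigma(H)=0$. For the shift, it differentiates $\Delta_{g_t}x^\alpha=0$ in $t$ and uses the same three appendix lemmas in the same order as you do. The paper simply cites the classical mean-curvature variation formula. Your optional self-contained derivation of $\partial_t A=-\nabla^2u+u(A^2-\Riem(\cdot,N,N,\cdot))+\mathcal L_\sigma A$, whose $\sigma$-terms assemble correctly via Codazzi, is extra detail that the paper omits.
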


\begin{proof}
The lapse equation is the normal variation formula for the mean curvature.
With the convention \eqref{coord-appendix:eq:second-fundamental-form}, variation in the
normal direction $fN$ gives
\[
    \left.\frac{d}{ds}\right|_{s=0}H_s
    =-\D_{g_t}f
     -\bigl(|A|_{g_t}^2+\Ric_{\og}(N,N)\bigr)f.
\]
The tangential part of the deformation contributes only $\sigma(H)$.
Since $H\equiv0$ for the foliation, the deformation vector field
$\dt=uN+\sigma$ gives
\[
    0=-\D_{g_t}u
      -\bigl(|A|_{g_t}^2+\Ric_{\og}(N,N)\bigr)u,
\]
which is \eqref{coord-appendix:eq:lapse}.

We next prove the shift equation. Each coordinate function
$x^\alpha$ is harmonic on each leaf, and $x^\alpha$ does not depend on $t$ as a function of the coordinates $(t,x)$. Therefore
\begin{eqnarray}\label{eq:0 is equal to stuff}
    0=\dt(\D_{g_t}x^\alpha) = D\Delta_g[\partial_t g_t]x^\alpha = D\Delta_g[2uA]x^\alpha+ D\Delta_g[\mathcal L_\sigma g_t ]x^\alpha .
\end{eqnarray}
The second equality is Lemma \ref{coord-appendix:lem:laplacian-variation}, the third equality is due to Lemma \ref{coord-appendix:lem:metric-evolution}.

Using Lemma~\ref{coord-appendix:lem:lie-laplacian} and $\D_{g_t}x^\alpha=0$, the
contribution of $\mathcal{L}_\sigma g_t$ is
\[
   D\Delta_g[\mathcal L_\sigma g_t ]x^\alpha =  -\D_{g_t}(\sigma(x^\alpha))=-\D_{g_t}\sigma^\alpha.
\]
Therefore, \eqref{eq:0 is equal to stuff} becomes
\begin{equation}\label{coord-appendix:eq:shift-variation}
    \D_{g_t}\sigma^\alpha=D\D_{g_t}[2uA]x^\alpha.
\end{equation}
Applying Lemma~\ref{coord-appendix:lem:laplacian-variation} with $h=2uA$ gives
\begin{align*}
    D\D_{g_t}[2uA]x^\alpha
    &=-2u\langle A,\nabla^2x^\alpha\rangle_{g_t}
      +\left\langle-\divergence_{g_t}(2uA)
        +\frac12 d(2uH),dx^\alpha\right\rangle_{g_t}\\
    &=-2u\langle A,\nabla^2x^\alpha\rangle_{g_t}
      -2A(\nabla u,\nabla x^\alpha)
      -2u(\divergence_{g_t}A)(\nabla x^\alpha),
\end{align*}
because $H\equiv0$. Combining this identity with
\eqref{coord-appendix:eq:shift-variation} proves \eqref{coord-appendix:eq:shift}.
\end{proof}

\begin{rmk}\label{coord-appendix:rem:coordinate-shift}
In the harmonic coordinates $x^1,\ldots,x^n$ on a leaf,
\[
    \nabla_\mu\nabla_\nu x^\alpha=-\Gamma^\alpha_{\mu\nu}.
\]
Therefore the shift equation \eqref{coord-appendix:eq:shift} can be written as
\begin{equation}\label{coord-appendix:eq:coordinate-shift}
    \D_{g_t}\sigma^\alpha
    =2uA^{\mu\nu}\Gamma^\alpha_{\mu\nu}
     -2A^{\mu\alpha}\nabla_\mu u-2u\nabla_\mu A^{\mu\alpha},
\end{equation}
where indices are raised using $g_t$.
\end{rmk}

\bibliographystyle{alpha}
\bibliography{ref}

\end{document}